\pdfoutput=1
\documentclass[11pt]{article}

\usepackage[T1]{fontenc}
\usepackage[utf8]{inputenc}
\usepackage{lmodern}
\usepackage{amsmath,amssymb,amsthm,mathtools}
\usepackage{microtype}
\usepackage[margin=1in]{geometry}
\usepackage{enumitem}
\usepackage[hidelinks]{hyperref}
\usepackage[nameinlink,noabbrev]{cleveref}
\hypersetup{
  pdftitle={Finite Normal Forms and Causal Inversion for Fractional Operator Algebras},
  pdfauthor={Adel Kassaian},
  pdfsubject={Fractional operator algebras and causal Volterra inversion},
  pdfkeywords={fractional operator algebra, normal form, boundary jets, causal convolution inverse, Sonine kernel, Prabhakar derivative}
}
\numberwithin{equation}{section}
\newtheorem{theorem}{Theorem}
\newtheorem{lemma}{Lemma}
\newtheorem{proposition}{Proposition}
\newtheorem{corollary}{Corollary}
\theoremstyle{definition}
\newtheorem{definition}{Definition}
\newtheorem{example}{Example}
\theoremstyle{remark}
\newtheorem{remark}{Remark}

\newcommand{\C}{\mathbb C}
\newcommand{\R}{\mathbb R}
\newcommand{\Z}{\mathbb Z}
\newcommand{\N}{\mathbb N}
\newcommand{\M}{\mathcal M}
\newcommand{\A}{\mathcal A}
\newcommand{\Pcal}{\mathcal P}
\newcommand{\Vflat}{\mathcal V_{\mathrm{flat}}}
\newcommand{\Kflat}{\mathcal K_{\mathrm{flat}}}
\newcommand{\D}{\mathcal D}
\newcommand{\E}{\mathcal E}
\newcommand{\one}{\mathbf 1}
\newcommand{\dd}{\,\mathrm d}
\newcommand{\supp}{\operatorname{supp}}
\newcommand{\ord}{\operatorname{ord}}
\newcommand{\Span}{\operatorname{span}}
\newcommand{\AC}{\operatorname{AC}}

\title{Finite Normal Forms and Causal Inversion for Fractional\\
Operator Algebras}

\author{Adel Kassaian\\
\small \href{mailto:a.kassaian@gmail.com}{\texttt{a.kassaian@gmail.com}}}
\date{}

\begin{document}
\maketitle

\begin{abstract}
Fractional operator algebras provide a natural setting for linking algebraic normal forms to causal inversion, but arbitrary smooth multipliers and Volterra kernels destroy finite fractional rewriting. We construct an algebra generated by polynomial multipliers, fractional powers indexed by an additive subgroup $\Gamma\subset\R$ containing $\Z$, and flat causal convolution operators. Its finite polynomially weighted diagonal-flat convolution operators form the smallest two-sided ideal containing the flat generators, polynomial multipliers are intrinsic, and distributional separation yields a unique finite normal form with an additive order filtration. For monic normal forms, the leading power selects a canonical causal right factor that extends from the flat core to the natural range $J^\alpha C[0,T]$. Completing the negative-order convolution sector in $L^1$ admits integrable Sonine and Prabhakar kernels, while an independently prescribed continuous bivariate Volterra term remains external. Under injectivity and power-type kernel bounds, variable lower-order terms and this bivariate perturbation reduce to one weakly singular second-kind equation, whose classical resolvent exists on every finite interval without smallness assumptions. A finite boundary-jet block incorporates prescribed initial data without changing the reduced kernel. Sonine, distributed-order, and regularized-Prabhakar realizations follow. In the noninteger Prabhakar regime, an exact range criterion characterizes the classical $\AC^m$ subdomain, and an explicit Riemann--Liouville example proves that it is strictly smaller than the continuous natural range.

\medskip
\noindent\textbf{Keywords:} fractional operator algebra; normal form; boundary jets; causal convolution inverse; Sonine kernel; Prabhakar derivative.

\medskip
\noindent\textbf{Mathematics Subject Classification (2020):} 26A33 (primary); 16W50; 44A10; 45D05; 47B38.
\end{abstract}

\section{Introduction}

Operator algebras generated by differentiation, integration and multiplication provide a useful interface between algebraic normal forms and analytic inversion. In the polynomial setting, Bavula \cite{Bavula2011} studied the algebra of integro-differential operators, Regensburger, Rosenkranz and Middeke \cite{Regensburger2009} developed canonical forms through skew-polynomial techniques, and Gao, Keigher and Rosenkranz \cite{Gao2017} investigated related completions. On the concrete space of smooth functions flat at an endpoint, Haghany and Kassaian \cite{Haghany2019} obtained an integer-order normal form, described the units, and applied the resulting algebra to Volterra equations. This raises the question addressed here: can the same finite-normal-form architecture accommodate a group of arbitrary fractional orders while preserving finite rewriting and a stable Volterra remainder?

Two obstructions make the fractional problem different. A general smooth multiplier produces an infinite fractional Leibniz tail, while the full class of smooth Volterra kernels is not, in general, invariant under arbitrary fractional differentiation. We therefore work on smooth causal functions flat at the initial point, restrict multipliers to polynomials, and select a finite polynomially weighted convolution class whose kernels are flat to all orders on the diagonal. On this space the operators $D^\gamma$, $\gamma\in\Gamma$, form a genuine group, polynomial commutation terminates after finitely many steps, and the resulting weighted-convolution Volterra operators form a two-sided invariant ideal. The restrictions are not chosen only to make the proof work: below we show that this ideal is precisely the two-sided ideal generated by the flat convolution operators and that finite reordering with the already present operator $J=D^{-1}$, even modulo an arbitrary smooth diagonal-flat remainder, forces every admitted smooth multiplier to be polynomial. This ideal is denoted $\Vflat$ below; ``diagonal-flat ideal'' is used only as shorthand for this narrower class, not for all smooth bivariate kernels flat on the diagonal. A distributional separation argument then gives uniqueness of the resulting finite normal form and an additive order filtration.

The relation with \cite{Haghany2019} is therefore not a simple inclusion in either direction. The earlier algebra admits broader smooth coefficients and kernels, whereas the present algebra restricts those classes in order to support arbitrary fractional orders, locally finite Leibniz rewriting and a distinguished invariant remainder ideal. Conversely, the present order group is substantially broader and the decomposition into generalized powers modulo the diagonal-flat ideal is unique. The coefficient restriction is structural rather than cosmetic: for a nonpolynomial multiplier such as $p(t)=e^t$, the generalized Leibniz formula contains derivatives of every order and does not terminate. More generally, the finite-closure argument below proves that no nonpolynomial smooth multiplier can be adjoined while retaining finite reordering with $J$ modulo a diagonal-flat remainder.

The fractional literature provides several complementary frameworks. Classical Riemann--Liouville and Caputo operators, semigroup relations and generalized Leibniz formulas are reviewed in \cite{Kilbas2006,Osler1970,Podlubny1999,Samko1993}. Mikusi\'nski's convolutional calculus \cite{Mikusinski1959} and Luchko's operational calculus for Sonine-generated derivatives \cite{Luchko2021,Luchko2022} emphasize convolutional inversion; related developments include the recent extension of Mikusi\'nski operational calculus via Sonine kernels \cite{GuzogluOperational2025}, Sonine-kernel constructions \cite{Ortigueira2024}, first-level derivatives \cite{Alkandari2024}, Prabhakar operational calculi \cite{RaniFernandez2022a,RaniFernandez2022b}, general analytic kernels \cite{Fernandez2019}, and a two-scale Tricomi realization \cite{Colombaro2026}. That operational extension is organized around convolution structures generated by Sonine pairs; the present finite-normal-form algebra instead incorporates polynomial multipliers and a diagonal-flat nonconvolution ideal, while independently prescribed continuous bivariate kernels remain external perturbations coupled through causal factorization. Hilfer and Kleiner \cite{HilferKleiner2024} develop fractional calculus on distributions, while Kleiner and Hilfer \cite{Kleiner2022} obtain normal forms for sequential generalized Riemann--Liouville derivatives through remover and eliminator operators. Their normal forms organize sequential derivative structures, whereas the present one treats polynomial multipliers together with powers indexed by an arbitrary additive subgroup containing $\Z$, modulo a concrete two-sided diagonal-flat Volterra ideal. Durastante et al.\ \cite{Durastante2026} treat nonautonomous fractional equations through a bivariate $\star$-product calculus and a Drazin-type inverse. Thus the present algebra combines variable polynomial coefficients, a group of fractional powers, and an invariant Volterra remainder in one finite coefficient--power normal form.

Recent work has also broadened the kernel side. Al-Shdaifat and Rodr\'iguez-L\'opez \cite{AlShdaifat2025} formulate general fractional integrals and derivatives with genuinely nonconvolution kernels. Their nonconvolution kernel belongs to the fractional operator itself. Here the fractional inverse remains convolutional in the completed smoothing sector, while an independently prescribed continuous bivariate kernel $K(t,s)$ enters as an external Volterra perturbation. This distinction allows the fractional normal-form algebra and the nonconvolution perturbation to be treated separately and then coupled by causal factorization.

These kernel classes are also natural in models with hereditary memory and anomalous transport, where Sonine and Prabhakar operators describe convolutional memory laws \cite{Podlubny1999,Giusti2020}. An independently prescribed $K(t,s)$ allows the perturbing memory to depend on the observation and history times separately, thereby accommodating nonstationary or inhomogeneous effects that cannot be represented by a function of $t-s$ alone.

Flatness is essential for the group law but removes endpoint corrections. The space $\M$ is therefore the homogeneous carrier of the finite-normal-form calculus, not the asserted domain of all equation-level solutions. Initial-value problems require a second, finite layer. For a fixed maximal order $m$, we supplement the homogeneous calculus by the jet functionals $\varepsilon_k u=u^{(k)}(0)$ and the matrix units $e_j\varepsilon_k$, where $e_j(t)=t^j/j!$. They form a finite matrix block that extracts the initial polynomial without changing the normal form on $\M$. Analytically, the same splitting becomes $u=P_{\mathbf d}+Rv$: $P_{\mathbf d}$ carries the prescribed data and $Rv$ the causal dynamics.

The passage from the finite-normal-form layer to the analytic theory is organized around two additional structures: the negative-order causal convolution sector, whose kernel-norm completion is $L^1(0,T)$ and supplies admissible smoothing inverses, and the finite boundary-jet block, which supplies the affine boundary extension. The link is not merely motivational. If a monic normal form has leading order $\alpha$, then the group law on $\M$ gives the canonical right factorization
\begin{equation*}
L_0=\left(I+\sum_j M_{p_j}J^{\alpha-\beta_j}+V_{K_0}J^\alpha\right)D^\alpha.
\end{equation*}
Moreover, $\M=J^\alpha\M\subset J^\alpha C[0,T]$, and the natural-range inverse $(J^\alpha)^{-1}$ restricts to $D^\alpha$ on $\M$. Thus the solution domain may be larger than $\M$ without severing the algebraic origin of the factorization. The completed sector contains every integrable Sonine kernel and, for the Prabhakar family, the fractional-integral expansion converges in kernel norm. If $R=V_r$ is an injective smoothing convolution operator and $A=R^{-1}$ is defined on its natural range, then variable lower-order terms and an independently prescribed continuous kernel $K(t,s)$ reduce to one weakly singular second-kind equation. The two-variable dependence survives explicitly in the reduced kernel, and classical Volterra resolvent theory gives inversion on every finite interval without a norm-smallness assumption. The same reduced kernel governs the boundary-extended problem on $\mathcal P_{m-1}\oplus R(C[0,T])$; the prescribed jets change only the finite-dimensional forcing.

The analytic theory developed below should therefore be viewed as a controlled enlargement initiated by the monic finite-normal-form factorization: the leading power determines the canonical causal right factor, while the subsequent completion and external Volterra perturbations extend the resulting inversion framework beyond the algebra $\A_\Gamma$ itself.

This factorization complements recent equation-level results. Variable-coefficient Prabhakar equations are treated in \cite{FernandezRestrepo2022}, Volterra--Prabhakar operational structures in \cite{Tomovski2026}, variable-coefficient Sonine equations in \cite{Guzoglu2026}, and the two-scale Sonine calculus in \cite{Colombaro2026}. Eshaghi and Ordokhani \cite{Eshaghi2021} consider a regularized-Prabhakar equation with a two-variable kernel and specialize that kernel to convolution form for Laplace--Bromwich inversion. In contrast, the reduction below retains an independently prescribed continuous bivariate kernel $K(t,s)$ explicitly, together with variable lower-order coefficients and compatible lower general derivatives.

The resulting equation-level gain can be stated concretely. In the Sonine setting, the framework combines continuous variable coefficients and compatible lower Sonine derivatives with an independently prescribed continuous bivariate Volterra memory term. Its distributed-order realization admits genuinely continuous positive order measures, rather than only finite sums of orders, under the admissibility hypotheses verified below. In the noninteger regularized-Prabhakar setting, the same reduction also incorporates arbitrary prescribed initial jets and distinguishes unconditional solvability in the affine natural range from the additional compatibility required for a classical $\AC^m$ solution. In each case the output is a resolvent representation of the unique natural-range solution, not in general an elementary closed-form formula.

The two principal theorem-level contributions are the unique finite coefficient--power normal form with its two-sided finite polynomially weighted diagonal-flat convolution ideal and the exact classical-range criterion in the regularized Prabhakar case. The core-to-natural-range proposition integrates these two parts: the leading power selected by the finite normal form produces the causal right factor, while the natural-range realization extends that identity beyond $\M$. This bridge is proved for monic normal forms, whereas the algebraic normal-form theorem itself has no monicity hypothesis. The finite boundary-jet block, the completed smoothing sector, and the causal factorization then connect the algebraic result to inversion while preserving an independently prescribed continuous bivariate perturbation. The proof combines kernel-norm density, weakly singular Volterra resolvents, the Prabhakar semigroup identity, and Bernstein--Stieltjes theory for distributed order. The final range criterion supplies the exact compatibility condition on the causal density that upgrades the unconditional natural-range solution to a classical $\AC^m$ solution.

The main results can be summarized in three steps.
\begin{enumerate}[leftmargin=*,label=\arabic*.]
\item For an additive subgroup $\Gamma\subset\R$ containing $\Z$, every element of the algebra generated by polynomial multipliers, $D^\gamma$ ($\gamma\in\Gamma$), and flat causal convolution operators has a unique finite normal form
\begin{equation*}
A=\sum_{\gamma\in F}p_\gamma(t)D^\gamma+V_K,
\qquad F\subset\Gamma\ \text{finite}.
\end{equation*}
The proof uses kernel detection and separation of generalized powers. The coefficient and remainder classes also satisfy sharp closure properties: polynomials are forced by finite reordering with $J$ modulo a diagonal-flat remainder, and $\Vflat$ is the smallest two-sided ideal containing the flat convolution generators. The induced order is additive, so the quotient by the diagonal-flat ideal has no zero divisors. A finite boundary matrix block then records prescribed endpoint jets without enlarging the homogeneous normal-form algebra.

\item A monic finite normal form with leading term $D^\alpha$ factors
canonically on $\M$ as $L_0=(I+W_0)D^\alpha$.  The identity
$\M=J^\alpha\M\subset J^\alpha C[0,T]$ shows that
$(I+W_0)(J^\alpha)^{-1}$ is a compatible natural-range extension, so the
equation-level theory retains a precise algebraic origin even though its
solution domain is larger than $\M$.  The kernel completion of the
negative-order convolution sector is then the Banach algebra $L^1(0,T)$ under convolution. In particular,
\begin{equation*}
\E_{\rho,\mu,\omega}^{\gamma}
=\sum_{n=0}^{\infty}\frac{(\gamma)_n\omega^n}{n!}J^{\mu+\rho n}
\end{equation*}
converges in kernel norm. More generally, if $B_jR=V_{q_j}$ for an injective smoothing inverse $R$, then
\begin{equation*}
L=A+\sum_{j=1}^{N}p_j(t)B_j+V_K=(I+W_L)A,
\qquad A=R^{-1},
\end{equation*}
and the explicit kernel of $W_L$ is generally nonconvolution. Its Volterra resolvent yields a resolvent representation of the inverse on every finite interval. The boundary extension acts on $\mathcal P_{m-1}\oplus R(C[0,T])$ and uses the same reduced kernel.

\item The abstract factorization yields finite-order, Sonine, distributed-order and regularized Prabhakar realizations. For a Sonine pair $(\kappa,k)$ it gives variable-coefficient equations with an independently prescribed continuous bivariate perturbation. In the noninteger regularized Prabhakar regime $m-1<\mu<m$, arbitrary finite initial jets are incorporated directly into the affine natural domain. The new range statement is not the standard Prabhakar left-inverse identity: it is the necessary-and-sufficient compatibility condition characterizing precisely which natural-range solutions belong to the classical $\AC^m[0,T]$ domain. An explicit sufficient condition for the causal density and a strict-inclusion example in the Riemann--Liouville subcase show that the criterion has genuine domain content.
\end{enumerate}

Section~2 develops the finite normal form, the flat homogeneous core and the boundary block. Section~3 first derives the core-to-natural-range factorization of a monic normal form, then gives the convolution completion and the homogeneous and boundary-extended causal factorizations. Section~4 specializes the framework to power-law, Sonine, distributed-order and Prabhakar operators. Section~5 summarizes the structural consequences and limitations.

\section{Finite normal forms in the fractional causal calculus}

\subsection{Flat causal functions and convolution injectivity}

Fix $T>0$ and set
\begin{equation*}
\M=\M[0,T]:=\{f\in C^\infty[0,T]:f^{(n)}(0)=0\text{ for every }n\ge0\}.
\end{equation*}
It is a Fr\'echet space for the seminorms
\begin{equation*}
p_m(f)=\sum_{j=0}^{m}\sup_{0\le t\le T}|f^{(j)}(t)|,
\qquad m\in\Z_+.
\end{equation*}
For $\mu>0$ let
\begin{equation*}
(J^\mu f)(t)=\frac1{\Gamma(\mu)}\int_0^t(t-s)^{\mu-1}f(s)\dd s.
\end{equation*}
On $\M$, the Riemann--Liouville and Caputo derivatives coincide, all initial correction terms vanish, and the operators form a group under composition. We denote their common value by $D^\alpha$, set $D^{-\mu}=J^\mu$, and use
\begin{equation}
D^\alpha D^\beta=D^{\alpha+\beta},
\qquad \alpha,\beta\in\R.
\label{eq:group-law}
\end{equation}
These facts are standard on flat causal functions; see \cite{Kilbas2006,Podlubny1999,Samko1993}.

For kernel calculations, let
\begin{equation*}
h_\beta(t)=\frac{t_+^{\beta-1}}{\Gamma(\beta)},
\end{equation*}
continued analytically in $\beta$ as a causal distribution. Then
\begin{equation*}
h_\alpha*h_\beta=h_{\alpha+\beta},
\qquad D^\alpha f=h_{-\alpha}*f,
\end{equation*}
for flat causal $f$; see \cite{Gelfand1964}. The generalized Leibniz formula is
\begin{equation}
D^\alpha(pf)=\sum_{j=0}^{\infty}\binom{\alpha}{j}p^{(j)}D^{\alpha-j}f,
\qquad
\binom{\alpha}{0}=1,
\quad
\binom{\alpha}{j}=\frac{\alpha(\alpha-1)\cdots(\alpha-j+1)}{j!}.
\label{eq:leibniz}
\end{equation}
This polynomial definition is valid for every $\alpha\in\R$, including negative integers; whenever the gamma quotients are finite, it agrees with $\Gamma(\alpha+1)/[\Gamma(j+1)\Gamma(\alpha-j+1)]$. For $p\in\C[t]$, the sum terminates. This is the local finiteness mechanism used below.

The stability of the flat space under every real fractional order will be used in the ideal argument. We record it explicitly.

\begin{lemma}[Invariance of the flat causal space]\label{lem:flat-invariance}
For every $\alpha\in\R$, the operator $D^\alpha$ maps $\M$ bijectively onto $\M$, with inverse $D^{-\alpha}$.
\end{lemma}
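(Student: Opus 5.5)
The plan is to show first that $J^\mu$ maps $\M$ into $\M$ for every $\mu>0$. Then we define $D^\alpha$ on $\M$ for $\alpha\ge0$ as $D^n J^{n-\alpha}$ with $n=\lceil\alpha\rceil$, so that $D^\alpha$ also preserves $\M$. Bijectivity then follows from the group law \eqref{eq:group-law} on $\M$: the identities $D^\alpha D^{-\alpha}=D^{-\alpha}D^\alpha=D^0=I$ on $\M$, together with $D^{\pm\alpha}(\M)\subset\M$, show that $D^\alpha$ is a bijection of $\M$ with inverse $D^{-\alpha}$. The whole statement therefore reduces to the mapping property $J^\mu\M\subset\M$ and the trivial fact that ordinary derivatives preserve $\M$.

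\textbf{Step 1: smoothness of $J^\mu f$.} For $f\in\M$ and $\mu>0$, substitute $s=t-\tau$ to get
\[
(J^\mu f)(t)=\frac1{\Gamma(\mu)}\int_0^t\tau^{\mu-1}f(t-\tau)\dd\tau .
\]
Extend $f$ by zero to $(-\infty,0]$. Flatness at $0$ makes this extension $C^\infty$ on $(-\infty,T]$. Then
\[
(J^\mu f)(t)=\frac1{\Gamma(\mu)}\int_0^T\tau^{\mu-1}\tilde f(t-\tau)\dd\tau \qquad (0\le t\le T),
\]
because $\tilde f(t-\tau)=0$ for $\tau>t$. Differentiation under the integral sign is justified by dominated convergence, since $\tau^{\mu-1}\in L^1(0,T)$ and every $\tilde f^{(k)}$ is bounded. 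It gives $(J^\mu f)^{(k)}=J^\mu f^{(k)}$, which is continuous on $[0,T]$. Hence $J^\mu f\in C^\infty[0,T]$.

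\textbf{Step 2: flatness of $J^\mu f$.} We need $(J^\mu f)^{(k)}(0)=0$. By Step 1 this value is $J^\mu f^{(k)}$ evaluated at $0$. Since $f^{(k)}\in\M$, Taylor's theorem gives $|f^{(k)}(s)|\le C_N s^N$, so
\[
|J^\mu f^{(k)}(t)|\le C_N\,t^{N+\mu}\,\frac{\Gamma(N+1)}{\Gamma(N+\mu+1)} .
\]
This bound vanishes at $t=0$. Thus $J^\mu f\in\M$. Derivatives clearly preserve $\M$, so $D^\alpha=D^nJ^{n-\alpha}$ maps $\M$ into $\M$. On $\M$ this agrees with the Riemann--Liouville and Caputo definitions, as the paper notes.

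\textbf{Step 3: inverse.} For $\alpha\ge0$ we have $D^\alpha J^\alpha f=D^nJ^{n-\alpha}J^\alpha f=D^nJ^nf=f$, using the semigroup law $J^aJ^b=J^{a+b}$ (Fubini and the Beta integral) and the fundamental theorem of calculus. Conversely,
\[
J^\alpha D^\alpha f=J^\alpha D^n J^{n-\alpha}f=D^nJ^\alpha J^{n-\alpha}f=f .
\]
The middle equality moves $D^n$ past $J^\alpha$, which is allowed because $J^\alpha$ commutes with differentiation on functions whose derivatives of order below $n$ vanish at $0$ (Step 1 applied to $g=J^{n-\alpha}f\in\M$). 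The case $\alpha<0$ is symmetric, and $\alpha=0$ is trivial.

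The main obstacle is justifying the interchange of $D^n$ and $J^\alpha$, that is, differentiating under a weakly singular integral. The zero-extension argument of Step 1 handles this cleanly, and I would present it carefully; everything else is the standard semigroup identity.
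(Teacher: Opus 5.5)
Your proof is correct and follows essentially the same route as the paper: you show $J^\mu\M\subset\M$ by differentiating under the causal integral and using flatness, then write $D^\alpha$ as a composition of integer derivatives and a fractional integral, so that bijectivity reduces to the inverse identities. The differences are cosmetic. You use the Riemann--Liouville form $D^nJ^{n-\alpha}$ where the paper uses the Caputo form $J^{n-\alpha}f^{(n)}$, and you verify $D^{\pm\alpha}D^{\mp\alpha}=I$ directly, via the semigroup law of $J$ and the commutation from your Step~1, rather than citing \eqref{eq:group-law}.
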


\begin{proof}
First let $\beta>0$ and $f\in\M$. Because all endpoint terms vanish, differentiation under causal convolution gives
\begin{equation*}
(J^\beta f)^{(m)}=J^\beta f^{(m)},
\qquad m\in\Z_+.
\end{equation*}
For every $m,N\in\Z_+$, flatness gives $|f^{(m)}(s)|\le C_{m,N}s^N$ near the origin. Hence
\begin{equation*}
|(J^\beta f)^{(m)}(t)|
\le \frac{C_{m,N}}{\Gamma(\beta)}\int_0^t(t-s)^{\beta-1}s^N\dd s
=C'_{m,N,\beta}t^{N+\beta}.
\end{equation*}
Thus $J^\beta f$ is smooth and flat at the origin, so $J^\beta\M\subset\M$. Given $\alpha\in\R$, choose an integer $n\ge0$ with $n>\alpha$. On flat functions,
\begin{equation*}
D^\alpha f=J^{n-\alpha}f^{(n)}\in\M.
\end{equation*}
Applying the same conclusion to $-\alpha$ and using \eqref{eq:group-law} shows that $D^{-\alpha}$ is the inverse of $D^\alpha$ on $\M$.
\end{proof}

\begin{remark}[Role of the flat core]\label{rem:flat-core}
The space $\M$ is not introduced as the space of all solutions of the equations
considered below.  It is the common homogeneous core on which every
$D^\gamma$, $\gamma\in\Gamma$, is a bijection and the group law
\eqref{eq:group-law} is free of endpoint corrections.  As in the integer-order
construction of \cite{Haghany2019}, it is maximal for the elementary
two-sided $D$--$J$ calculus: if a linear subspace of $C^\infty[0,T]$ is
invariant under $D$ and $J=J^1$ and $JD=DJ=I$ there, then $JDf=f$ gives
$f(0)=0$, and applying the same identity to $D^nf$ gives
$f^{(n)}(0)=0$ for every $n$.  Thus that subspace is contained in $\M$.
The natural ranges $R(C[0,T])$ used in Sections~3--4 are larger analytic
domains.  Their relation to this core is an extension question, made explicit
for the power-law sector in \Cref{prop:core-range-bridge}; prescribed endpoint
data are handled separately by the boundary block of Section~2.4.
\end{remark}

We shall repeatedly use injectivity of causal convolution on a finite interval. The following form makes the natural-range arguments independent of the classical domain of any associated derivative.

\begin{lemma}[Causal convolution injectivity]\label{lem:injectivity}
Let $r\in L^1(0,T)$ and suppose that $r$ does not vanish almost everywhere on any interval $(0,\varepsilon)$, $\varepsilon>0$. Then
\begin{equation}
(V_rf)(t)=\int_0^t r(t-s)f(s)\dd s
\label{eq:Vr}
\end{equation}
defines an injective operator on both $L^1(0,T)$ and $C[0,T]$.
\end{lemma}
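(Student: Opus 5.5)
The natural tool is the Titchmarsh convolution theorem: if $r,f\in L^1(0,T)$ and $r*f=0$ a.e.\ on $(0,T)$, then there exist $a,b\ge0$ with $a+b\ge T$ such that $r=0$ a.e.\ on $(0,a)$ and $f=0$ a.e.\ on $(0,b)$. The plan is to reduce both injectivity claims to this statement and then use the hypothesis on $r$ to force $a=0$, hence $b\ge T$.

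\emph{Step 1: well-definedness.} First we check that $V_r$ maps $L^1(0,T)$ into itself. Extend $r$ and $f$ by zero to $(0,\infty)$. Fubini gives
\begin{equation*}
\|V_rf\|_{L^1(0,T)}\le\|r\|_{L^1(0,T)}\,\|f\|_{L^1(0,T)}.
\end{equation*}
For $f\in C[0,T]$, the function $V_rf$ is continuous. This follows from continuity of translation in $L^1$: for $t<t'$,
\begin{equation*}
|(V_rf)(t')-(V_rf)(t)|\le\|f\|_\infty\Bigl(\|r(\cdot+t'-t)-r\|_{L^1}+\int_{0}^{t'-t}|r|\Bigr).
\end{equation*}
Only the $L^1$ statement is really needed, though. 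Since $C[0,T]\subset L^1(0,T)$, and a continuous function vanishing a.e.\ vanishes identically, injectivity on $C[0,T]$ follows from injectivity on $L^1(0,T)$.

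\emph{Step 2: injectivity on $L^1$.} Suppose $V_rf=0$ a.e.\ on $(0,T)$. By Titchmarsh we obtain $a,b$ as above. By hypothesis, $r$ does not vanish a.e.\ on any interval $(0,\varepsilon)$ with $\varepsilon>0$, so $a=0$. Then $b\ge T$, so $f=0$ a.e.\ on $(0,T)$.

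\emph{Main obstacle.} The only nontrivial ingredient is Titchmarsh's theorem itself. I would cite it in its finite-interval $L^1$ form, as found in standard references such as Mikusi\'nski \cite{Mikusinski1959} or Titchmarsh's original paper, rather than reprove it. The care needed is in the finite-interval version: the vanishing of $r*f$ is assumed only on $(0,T)$, and the conclusion is $a+b\ge T$. Both the Mikusi\'nski and Yosida formulations state exactly this local form, so the plan is to invoke that version directly.
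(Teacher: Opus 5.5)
Your proposal is correct and follows the paper's own route: the local Titchmarsh convolution theorem, the hypothesis on $r$ forcing $a=0$ and hence $b\ge T$, and the continuous case deduced from vanishing almost everywhere. Your Step~1, which shows that $V_r$ is bounded on $L^1(0,T)$ and that $V_rf$ is continuous for continuous $f$, is a correct addition that the paper leaves implicit.
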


\begin{proof}
Extend $r$ and $f$ by zero to the negative half-line. If $V_rf=0$ on $(0,T)$, the local Titchmarsh convolution theorem \cite{Titchmarsh1948} gives numbers $a,b\ge0$ with $a+b\ge T$ such that $r=0$ almost everywhere on $(0,a)$ and $f=0$ almost everywhere on $(0,b)$. The hypothesis forces $a=0$, hence $b\ge T$. Thus $f=0$ almost everywhere on $(0,T)$; for continuous $f$, this also gives pointwise vanishing on $[0,T]$.
\end{proof}

\subsection{The fractional operator algebra and the weighted-convolution ideal}

Let $\Pcal=\C[t]$ and let $\Gamma\subset\R$ be an additive subgroup containing $\Z$. For $p\in\Pcal$, write $M_p$ for multiplication by $p$. For $k\in\M$, define $V_k$ by \eqref{eq:Vr}, and introduce the finite kernel class
\begin{equation}
\Kflat=\left\{K(t,s)=\sum_{r=1}^{N}p_r(t)k_r(t-s):p_r\in\Pcal,\ k_r\in\M,\ N<\infty\right\}.
\label{eq:Kflat}
\end{equation}
Thus $\Kflat$ consists specifically of finite polynomially weighted convolution kernels. Every $K\in\Kflat$ is smooth on $\Delta_T=\{(t,s):0\le s\le t\le T\}$ and flat to all orders in the transverse variable $t-s$ along the diagonal; no claim is made here that every smooth bivariate kernel with diagonal flatness belongs to $\Kflat$. Accordingly, $\Vflat$ below always means the finite polynomially weighted diagonal-flat convolution ideal determined by \eqref{eq:Kflat}.

\begin{definition}
The fractional integro-differential algebra $\A_\Gamma$ is the subalgebra of $\operatorname{End}_{\C}(\M)$ generated by
\begin{equation*}
\{M_p:p\in\Pcal\},
\qquad \{D^\gamma:\gamma\in\Gamma\},
\qquad \{V_k:k\in\M\}.
\end{equation*}
\end{definition}

The following identities are rewrite rules on $\M$:
\begin{align}
D^\gamma M_p&=\sum_{j=0}^{\deg p}\binom{\gamma}{j}M_{p^{(j)}}D^{\gamma-j},
\label{eq:rewrite1}\\
V_kM_p&=\sum_{j=0}^{\deg p}\frac{(-1)^j}{j!}M_{p^{(j)}}V_{u^jk(u)},
\label{eq:rewrite2}\\
D^\gamma V_k&=V_kD^\gamma=V_{D^\gamma k},
\label{eq:rewrite3}\\
V_kV_\ell&=V_{k*\ell}.
\label{eq:rewrite4}
\end{align}
Here \eqref{eq:rewrite1} follows from \eqref{eq:leibniz}, \eqref{eq:rewrite2} from the exact Taylor formula $p(s)=\sum_jp^{(j)}(t)(s-t)^j/j!$, and \eqref{eq:rewrite3}--\eqref{eq:rewrite4} from causal convolution.

\begin{proposition}\label{prop:ideal}
The set
\begin{equation*}
\Vflat:=\{V_K:K\in\Kflat\}
\end{equation*}
is a two-sided ideal of $\A_\Gamma$.
\end{proposition}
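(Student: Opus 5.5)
The plan is to show three things: that $\Vflat$ is a linear subspace contained in $\A_\Gamma$, and that it is stable under left and right multiplication by each of the three families of generators $M_p$, $D^\gamma$, $V_k$. Stability under generators suffices, because every element of $\A_\Gamma$ is a finite linear combination of finite words in the generators. Two facts are immediate from \eqref{eq:Kflat}: $\Kflat$ is closed under sums and scalar multiples, and $K\mapsto V_K$ is linear. Each $V_K$ with $K=\sum_r p_r(t)k_r(t-s)$ equals $\sum_r M_{p_r}V_{k_r}$, so $\Vflat\subset\A_\Gamma$. By linearity it also suffices to treat the single term $M_{p}V_{k}$ with $p\in\Pcal$ and $k\in\M$.

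\emph{Left multiplication.} Clearly $M_qM_pV_k=M_{qp}V_k$. For $D^\gamma$ I would apply \eqref{eq:rewrite1} and then \eqref{eq:rewrite3}:
\begin{equation*}
D^\gamma M_pV_k=\sum_{j}\binom{\gamma}{j}M_{p^{(j)}}V_{D^{\gamma-j}k}.
\end{equation*}
This lies in $\Vflat$ because $D^{\gamma-j}k\in\M$ by \Cref{lem:flat-invariance}. For $V_\ell$ I would use \eqref{eq:rewrite2} and \eqref{eq:rewrite4}:
\begin{equation*}
V_\ell M_pV_k=\sum_j\frac{(-1)^j}{j!}M_{p^{(j)}}V_{(u^j\ell)*k}.
\end{equation*}
Here one must check that $u^j\ell(u)\in\M$, which holds because the product of a polynomial and a flat function is flat. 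One must also check that $\M*\M\subset\M$. That follows from \Cref{lem:flat-invariance}-type estimates: $(f*g)^{(m)}=f*g^{(m)}$ because the endpoint terms vanish, and the same bound $|g^{(m)}(s)|\le Cs^N$ shows the convolution is flat.

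\emph{Right multiplication.} The case $M_pV_kD^\gamma=M_pV_{D^\gamma k}$ again uses \eqref{eq:rewrite3} and \Cref{lem:flat-invariance}, and $M_pV_kV_\ell=M_pV_{k*\ell}$ uses \eqref{eq:rewrite4}. The only case that needs a commutation is $M_pV_kM_q$, where I would apply \eqref{eq:rewrite2} to $V_kM_q$:
\begin{equation*}
M_pV_kM_q=\sum_j\frac{(-1)^j}{j!}M_{pq^{(j)}}V_{u^jk(u)}\in\Vflat.
\end{equation*}

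The main obstacle is less any one computation than justifying the rewrite rules themselves, namely that all terms are finite and all kernels stay in $\M$. The key points are that \eqref{eq:rewrite1} terminates because $p$ is a polynomial, and that \eqref{eq:rewrite3} holds for all real $\gamma$ (including positive orders) on $\M$. For the latter I would argue via $D^\gamma=D^nJ^{n-\gamma}$, using that differentiation passes onto the flat kernel with vanishing boundary terms, together with commutativity and associativity of causal convolution. With these in place, every product of a generator with an element of $\Vflat$ is again a finite sum of terms $M_{p'}V_{k'}$ with $p'\in\Pcal$ and $k'\in\M$, which proves that $\Vflat$ is a two-sided ideal of $\A_\Gamma$.
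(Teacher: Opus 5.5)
Your proposal is correct and follows essentially the paper's route: you check stability of $\Vflat$ under left and right multiplication by each generator using the rewrite rules \eqref{eq:rewrite1}--\eqref{eq:rewrite4}, with \Cref{lem:flat-invariance} and closure of $\M$ under monomial multiplication and causal convolution keeping every new kernel flat. Your explicit checks that $\Vflat\subset\A_\Gamma$ (via $V_K=\sum_r M_{p_r}V_{k_r}$) and that $\M*\M\subset\M$ fill in points the paper leaves implicit.
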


\begin{proof}
It is enough to check stability under the generators. Left multiplication by $M_p$ merely replaces each coefficient $p_r(t)$ in \eqref{eq:Kflat} by $p(t)p_r(t)$. Right multiplication by $M_p$ is reduced by \eqref{eq:rewrite2}; every new kernel $u^jk_r(u)$ remains in $\M$. Left or right multiplication by $D^\gamma$ is reduced by \eqref{eq:rewrite1} and \eqref{eq:rewrite3}, and the preceding lemma gives $D^\gamma k_r\in\M$.

For completeness, consider two elementary weighted convolution kernels. Their composition has kernel
\begin{equation*}
p(t)\int_s^t k(t-\xi)q(\xi)\ell(\xi-s)\dd\xi.
\end{equation*}
Expanding $q(\xi)$ about $t$ by its finite Taylor formula turns this into
\begin{equation*}
\sum_{j=0}^{\deg q}\frac{(-1)^j}{j!}p(t)q^{(j)}(t)
\big[(u^jk(u))*\ell\big](t-s).
\end{equation*}
Each convolution kernel is flat because $\M$ is closed under multiplication by monomials and causal convolution. Finite linearity proves the assertion.
\end{proof}

\begin{corollary}[Minimality of the Volterra remainder]\label{cor:minimal-ideal}
The ideal $\Vflat$ is exactly the smallest two-sided ideal of $\A_\Gamma$
that contains all flat convolution generators $V_k$, $k\in\M$.
\end{corollary}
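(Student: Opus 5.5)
The proof has two inclusions. First, by \Cref{prop:ideal}, $\Vflat$ is a two-sided ideal of $\A_\Gamma$. It contains every generator $V_k$, $k\in\M$: take $N=1$ and $p_1=1$ in \eqref{eq:Kflat}. Hence the smallest two-sided ideal $\mathcal I$ generated by $\{V_k:k\in\M\}$ satisfies $\mathcal I\subseteq\Vflat$.

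For the reverse inclusion we must show that every $V_K$ with $K\in\Kflat$ lies in $\mathcal I$. Since $\mathcal I$ is a linear subspace, it suffices to treat an elementary kernel $K(t,s)=p(t)k(t-s)$ with $p\in\Pcal$ and $k\in\M$. The operator with this kernel acts as $f\mapsto p(t)\int_0^t k(t-s)f(s)\dd s$, which is exactly the composition $M_pV_k$. Because $M_p\in\A_\Gamma$ and $V_k\in\mathcal I$, and a two-sided ideal is closed under left multiplication by algebra elements, we get $M_pV_k\in\mathcal I$. Summing finitely many such terms gives $V_K\in\mathcal I$ for every $K\in\Kflat$, so $\Vflat\subseteq\mathcal I$.

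The only point to check is that the operator $V_K$ defined from the bivariate kernel $K$ agrees on $\M$ with $\sum_r M_{p_r}V_{k_r}$. This is immediate from pulling the $t$-dependent factor $p_r(t)$ out of the $s$-integral. No rewriting rule such as \eqref{eq:rewrite2} is needed for this direction, since only left multiplication by multipliers is used. I do not expect a genuine obstacle. The substantive content, namely closure of $\Vflat$ under the generators, is already contained in \Cref{prop:ideal}, and the corollary reduces to the observation that the weights $p_r(t)$ in \eqref{eq:Kflat} appear on the left.
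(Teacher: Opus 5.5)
Your proof is correct and follows essentially the same argument as the paper. The inclusion $\Vflat\subseteq\mathcal I$ comes from writing each $V_K$ as a finite sum of left products $M_{p_r}V_{k_r}$, and the reverse inclusion from \Cref{prop:ideal} together with the fact that each $V_k$ lies in $\Vflat$ (which you make explicit by taking $N=1$, $p_1=1$).
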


\begin{proof}
Let $\mathcal I$ be any two-sided ideal containing every $V_k$, $k\in\M$.
Then $M_pV_k\in\mathcal I$ for every $p\in\Pcal$, and hence every finite
sum $V_K$ with $K$ of the form \eqref{eq:Kflat} belongs to $\mathcal I$.
Thus $\Vflat\subseteq\mathcal I$. The reverse minimality statement follows
from \Cref{prop:ideal}, which shows that $\Vflat$ itself is a two-sided ideal
containing all the generators $V_k$.
\end{proof}

\begin{remark}[Finite closure forces polynomial multipliers]
The multiplier restriction is forced by the requested finite normal-form
architecture. More generally, if $JM_a$ admitted any finite coefficient--power
normal form modulo a smooth diagonal-flat remainder, restriction to $s<t$ and
separation of the distinct generalized powers would eliminate every exponent
except the negative integers $-1,-2,\ldots$; the nonnegative integer powers,
which are supported on the diagonal, would vanish as well. Thus such a form
necessarily reduces to the following situation. Let $a\in C^\infty[0,T]$ and
suppose that, for some $N<\infty$ and
$b_0,\ldots,b_N\in C^\infty[0,T]$, one has
\begin{equation}
JM_a=\sum_{j=0}^{N}M_{b_j}J^{j+1}+V_K,
\label{eq:finite-J-closure}
\end{equation}
where $K$ is smooth and flat to every order in $t-s$ along the diagonal.
Equality of the kernels for $s<t$, with $u=t-s$, gives
\begin{equation*}
a(t-u)=\sum_{j=0}^{N}b_j(t)\frac{u^j}{j!}+K(t,t-u).
\end{equation*}
Taking successive $u$-derivatives at $u=0$ yields
$b_j(t)=(-1)^ja^{(j)}(t)$ for $0\le j\le N$ and
$a^{(j)}(t)=0$ for every $j>N$. Hence $a$ is a polynomial of degree at most
$N$. Conversely, the finite Taylor formula gives \eqref{eq:finite-J-closure}
with $K=0$ for every polynomial $a$.  Thus, even if the remainder in
\eqref{eq:finite-J-closure} is allowed to range over the full class of smooth
diagonal-flat kernels rather than only $\Kflat$, the polynomial multipliers
are exactly the smooth multipliers that reorder finitely with the already
present operator $J=D^{-1}$. Together with \Cref{cor:minimal-ideal}, this shows
that the two restricted classes are closure-determined rather than ad hoc.
\end{remark}

\subsection{Kernel separation and unique normal form}

The next observation detects the entire causal distribution kernel, including components supported on the diagonal.

\begin{lemma}[Kernel detection]\label{lem:kernel-detection}
Let $I\Subset(0,T)$ be an open interval and let $Q\in\D'(I\times I)$ have support in $\{(t,s):s\le t\}$. Define its kernel action by
\begin{equation*}
\langle T_Q\psi,\varphi\rangle:=\langle Q,\varphi(t)\psi(s)\rangle,
\qquad \varphi,\psi\in C_c^\infty(I).
\end{equation*}
If $T_Q\psi=0$ for every $\psi\in C_c^\infty(I)$, then $Q=0$ in $\D'(I\times I)$. Consequently, two causal distribution kernels that induce the same operator on $\M$ agree locally across every interior portion of the diagonal, not merely on the open set $s<t$.
\end{lemma}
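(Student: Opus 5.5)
The plan is to reduce the claim to the density of finite sums of tensor products $\varphi\otimes\psi$ in $C_c^\infty(I\times I)$. The support hypothesis on $Q$ matters only for interpreting the consequence; the vanishing statement itself holds for any distribution on $I\times I$.

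First, unwinding the definition, the hypothesis $T_Q\psi=0$ for every $\psi$ means $\langle Q,\varphi\otimes\psi\rangle=0$ for all $\varphi,\psi\in C_c^\infty(I)$. By linearity, $Q$ then vanishes on the algebraic tensor product $C_c^\infty(I)\otimes C_c^\infty(I)$.

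Second, I will invoke the standard density of this tensor product in $C_c^\infty(I\times I)$ for the inductive-limit topology; see \cite{Gelfand1964} or the Schwartz kernel theorem. The argument runs as follows. Given $\Phi\in C_c^\infty(I\times I)$, choose a compact square $K\times K\subset I\times I$ containing a neighbourhood of $\supp\Phi$, and a cutoff $\chi\in C_c^\infty(I)$ equal to $1$ near $K$. Approximate $\Phi$ in $C^\infty$ on a box by trigonometric or polynomial sums $\sum_i a_i(t)b_i(s)$, using Stone--Weierstrass applied to derivatives, or Fourier series of a periodization. Then multiply by $\chi(t)\chi(s)$. The resulting tensor sums $\sum_i(\chi a_i)\otimes(\chi b_i)$ converge to $\Phi$ in $C^\infty$ with supports in a fixed compact set. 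Since $Q$ is continuous for this convergence, $\langle Q,\Phi\rangle=0$, so $Q=0$. I expect this density step to be the only real point, and I will cite it rather than reprove it.

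For the consequence, take two causal distribution kernels $Q_1,Q_2$ inducing the same operator on $\M$, and restrict to $I\Subset(0,T)$. The key observation is that $C_c^\infty(I)\subset\M$, because functions compactly supported in $(0,T)$ vanish near $0$ with all derivatives. Hence $T_{Q_1-Q_2}\psi=0$ for all $\psi\in C_c^\infty(I)$, after pairing with $\varphi\in C_c^\infty(I)$. This pairing requires the operator action on $\M$ to be given by the kernel pairing, which is what ``inducing'' means here. The first part then gives $Q_1=Q_2$ on $I\times I$, a neighbourhood of the diagonal segment over $I$. This covers distributional pieces such as $\delta^{(j)}(t-s)$ that are supported on the diagonal.
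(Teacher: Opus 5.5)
Your proposal is correct and follows essentially the same route as the paper. The inclusion $C_c^\infty(I)\subset\M$ turns the hypothesis into vanishing of $Q$ on every tensor product $\varphi\otimes\psi$; density of finite tensor sums in $C_c^\infty(I\times I)$, together with continuity of $Q$, then gives $Q=0$ on the full square, diagonal included, and your added sketch of the density step and remark that the support condition is not needed for the vanishing itself are both correct.
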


\begin{proof}
Every $\psi\in C_c^\infty(I)$, extended by zero near the origin, belongs to $\M$. The hypothesis therefore gives
\begin{equation*}
\langle Q,\varphi\otimes\psi\rangle=0
\qquad\text{for all }\varphi,\psi\in C_c^\infty(I).
\end{equation*}
Finite sums of tensor products are dense in $C_c^\infty(I\times I)$ in its standard test-function topology. Continuity of $Q$ then yields $\langle Q,\Phi\rangle=0$ for every $\Phi\in C_c^\infty(I\times I)$, hence $Q=0$. Because these test functions range over the full square, the argument also detects distributions such as $\delta(t-s)$ and its derivatives supported on the diagonal.
\end{proof}

\begin{lemma}[Separation of generalized powers]\label{lem:separation}
Let $F\subset\R$ be finite. Suppose
\begin{equation}
\sum_{\gamma\in F}a_\gamma(t)h_{-\gamma}(t-s)+R(t,s)=0
\label{eq:separation}
\end{equation}
as a distribution kernel near an interior point of the diagonal, where the $a_\gamma$ are smooth and $R$ is smooth and flat to every order in $t-s$. Then all $a_\gamma$ and $R$ vanish.
\end{lemma}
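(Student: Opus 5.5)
The plan is to treat separately the generalized powers whose kernels are genuine functions off the diagonal and those supported on the diagonal. Write $u=t-s$ and split $F=F_0\sqcup F_1$, where $F_0=F\cap\Z_{\ge0}$ and $F_1=F\setminus F_0$. For $n\in F_0$ the distribution $h_{-n}$ is the analytic continuation at a pole of $1/\Gamma$, so $h_{-n}(u)=\delta^{(n)}(u)$, which is supported on $u=0$. For $\gamma\in F_1$ the distribution $h_{-\gamma}(u)$ restricted to $u>0$ is the function $u^{-\gamma-1}/\Gamma(-\gamma)$ with $1/\Gamma(-\gamma)\neq0$. This class includes the negative integers $\gamma=-n$, for which the kernel is the polynomial $u^{n-1}/(n-1)!$. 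The exponents $e_\gamma=-\gamma-1$, $\gamma\in F_1$, are pairwise distinct real numbers.

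\emph{Step 1 (off the diagonal).} Let $(t_0,t_0)$ be the interior diagonal point and $U$ the neighbourhood on which \eqref{eq:separation} holds. Restrict \eqref{eq:separation} to the open set $U\cap\{s<t\}$. The diagonal terms disappear there, and for each fixed $t$ near $t_0$ and small $u>0$ we obtain the pointwise identity
\begin{equation*}
\sum_{\gamma\in F_1}\frac{a_\gamma(t)}{\Gamma(-\gamma)}u^{e_\gamma}+R(t,t-u)=0 .
\end{equation*}
Flatness of $R$ in $t-s$ gives $R(t,t-u)=O(u^N)$ as $u\downarrow0$ for every $N$. Order the exponents increasingly and let $e_{\min}$ be the smallest. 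Multiply by $u^{-e_{\min}}$ and let $u\downarrow0$. All other power terms tend to $0$, and so does the $R$ term, so the coefficient of $u^{e_{\min}}$ vanishes. Inducting on the number of exponents yields $a_\gamma(t)=0$ for all $\gamma\in F_1$ and all $t$ near $t_0$. Then $R=0$ on $s<t$, and since $R$ is smooth up to the diagonal, $R$ vanishes on the closure as well.

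\emph{Step 2 (on the diagonal).} What remains is $\sum_{n\in F_0}a_n(t)\,\delta^{(n)}(t-s)=0$ in $\D'$ near $(t_0,t_0)$. Pass to the coordinates $(t,u)$, $u=t-s$, via a linear diffeomorphism of Jacobian $\pm1$. In these coordinates the distribution becomes $\sum_n a_n(t)\otimes\delta^{(n)}(u)$. Test it against $\varphi(t)\,\chi(u)\,u^k/k!$, where $\varphi\in C_c^\infty$ is supported near $t_0$ and $\chi\equiv1$ near $u=0$. Since $\langle\delta^{(n)},u^k\chi/k!\rangle=(-1)^k\delta_{nk}$, this gives $(-1)^k\int a_k\varphi\,\dd t=0$ for each $k\in F_0$. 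Hence $a_k=0$ near $t_0$, and together with Step 1 every $a_\gamma$ and $R$ vanishes. (\Cref{lem:kernel-detection} is what justifies reading operator identities on $\M$ as this distributional identity, but the present lemma is already stated at the distribution level.)

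\emph{Main obstacle.} I expect the main difficulty to be bookkeeping rather than analysis. One must check that the negative-integer orders, whose kernels are smooth polynomials in $u$, are not absorbed into $R$; flatness of $R$ is exactly what excludes this, and the leading-exponent argument handles them uniformly with the noninteger orders. One must also ensure that the restriction to $s<t$ is used only in a neighbourhood where \eqref{eq:separation} is asserted, so that all conclusions are local near $t_0$, as in the statement.
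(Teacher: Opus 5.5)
Your proposal is correct and follows essentially the same route as the paper: you restrict to $u=t-s>0$ and use a leading-singularity limit, with flatness of $R$ killing the remainder, then test the diagonal block $\sum_n a_n(t)\delta^{(n)}(u)$ against $\varphi(t)\chi(u)u^k/k!$. Your induction on the number of exponents is only a reformulation of the paper's choice of the largest index $\gamma_*$ with $a_{\gamma_*}(t_0)\ne0$.
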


\begin{proof}
Recall explicitly that $h_{-n}=\delta^{(n)}$ for $n\in\Z_{\ge0}$, whereas for $\gamma\notin\Z_{\ge0}$ the restriction of $h_{-\gamma}$ to $u>0$ is the function $u^{-\gamma-1}/\Gamma(-\gamma)$.
Use the local coordinates $(t,u)=(t,t-s)$. Restriction of a distribution to the open set $u>0$ is legitimate and discards precisely the components supported on $u=0$. Thus, after restricting \eqref{eq:separation} to $u>0$, all terms with $\gamma\in\Z_{\ge0}$ disappear because $h_{-\gamma}=\delta^{(\gamma)}$ is supported on $u=0$. Every remaining term is a smooth function on $u>0$ and has the form $a_\gamma(t)c_\gamma u^{-\gamma-1}$ with $c_\gamma=1/\Gamma(-\gamma)\ne0$.

Fix an interior $t_0$ and suppose some remaining $a_\gamma(t_0)$ is nonzero. Choose the largest such index $\gamma_*$. Evaluating at $(t_0,u)$, dividing by $u^{-\gamma_*-1}$ and letting $u\downarrow0$ leaves $c_{\gamma_*}a_{\gamma_*}(t_0)\ne0$: every lower generalized power contributes a factor $u^{\gamma_*-\gamma}\to0$. For the remainder, choose an integer $N>-\gamma_*-1$; flatness gives $R(t_0,t_0-u)=O(u^N)$, so the divided remainder is $O(u^{N+\gamma_*+1})\to0$. This contradiction proves that all noninteger and negative-integer coefficients vanish. Thus negative integer orders are separated on $u>0$ together with the other nondiscrete powers, whereas nonnegative integer orders remain in the diagonal-supported block below. The same restricted identity then gives $R=0$ for $u>0$, and smoothness together with transverse flatness extends this equality across $u=0$.

It remains only to separate the diagonal-supported terms
\begin{equation*}
\sum_{n\in F\cap\Z_{\ge0}}a_n(t)\delta^{(n)}(u)=0.
\end{equation*}
Fix $N\in F\cap\Z_{\ge0}$. Choose $\psi_N\in C_c^\infty(\R)$, supported in the coordinate neighborhood, such that
\begin{equation*}
\psi_N^{(j)}(0)=\delta_{jN}
\qquad\text{for every }j\in F\cap\Z_{\ge0};
\end{equation*}
for example, one may take $\psi_N(u)=u^N\chi(u)/N!$, where $\chi$ is identically one near $u=0$. Testing against $\varphi(t)\psi_N(u)$, with arbitrary $\varphi\in C_c^\infty(I)$, gives
\begin{equation*}
0=(-1)^N\int_I a_N(t)\varphi(t)\dd t.
\end{equation*}
Hence $a_N=0$ as a distribution, and therefore as a smooth function, on $I$. Since $N$ was arbitrary, every diagonal coefficient vanishes. Equivalently, the distributions $\delta^{(n)}(u)$ are linearly independent over smooth functions of the tangential variable. Thus no diagonal term can cancel another diagonal order or the smooth-flat remainder, completing the proof.
\end{proof}

\begin{theorem}[Finite normal form]\label{thm:normal-form}
Every $A\in\A_\Gamma$ has a unique representation
\begin{equation}
A=\sum_{\gamma\in F}p_\gamma(t)D^\gamma+V_K,
\label{eq:normal-form}
\end{equation}
where $F\subset\Gamma$ is finite, $p_\gamma\in\Pcal$, and $K\in\Kflat$.
\end{theorem}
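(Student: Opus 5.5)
Existence and uniqueness will be handled separately. For existence, let $\mathcal N\subset\A_\Gamma$ be the set of operators admitting a representation \eqref{eq:normal-form}. Every generator lies in $\mathcal N$: $M_p=pD^0$, $D^\gamma=1\cdot D^\gamma$, and $V_k=V_K$ with $K(t,s)=1\cdot k(t-s)\in\Kflat$. Since $\mathcal N$ is closed under linear combinations, it suffices to show that $\mathcal N$ is closed under right multiplication by each generator. Then every word in the generators, and hence all of $\A_\Gamma$, lies in $\mathcal N$.

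Take a term $p_\gamma D^\gamma$ or $V_K$. For $V_K$, right multiplication by any generator stays in $\Vflat$ by \Cref{prop:ideal}. For $p_\gamma D^\gamma$, there are three cases:
\begin{itemize}
\item Right multiplication by $M_q$ is rewritten with \eqref{eq:rewrite1}. This gives the finite sum $\sum_j\binom{\gamma}{j}p_\gamma q^{(j)}D^{\gamma-j}$, whose exponents $\gamma-j$ stay in $\Gamma$ because $\Z\subset\Gamma$.
\item Right multiplication by $D^\beta$ gives $p_\gamma D^{\gamma+\beta}$ by the group law \eqref{eq:group-law}.
\item Right multiplication by $V_k$ gives $M_{p_\gamma}V_{D^\gamma k}$ by \eqref{eq:rewrite3}. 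Here $D^\gamma k\in\M$ by \Cref{lem:flat-invariance}, so this term lies in $\Vflat$.
\end{itemize}
Collecting equal exponents keeps $F$ finite, which proves existence.

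For uniqueness, suppose two representations agree on $\M$. Subtracting them gives $\sum_{\gamma\in F}a_\gamma D^\gamma+V_K=0$ on $\M$, with $a_\gamma\in\Pcal$ and $K\in\Kflat$. The aim is to show that every $a_\gamma$ and $K$ vanish. First write the operator as a causal distribution kernel on $I\times I$ for an interior interval $I\Subset(0,T)$:
\begin{equation*}
Q(t,s)=\sum_{\gamma\in F}a_\gamma(t)h_{-\gamma}(t-s)+K(t,s).
\end{equation*}
This representation is justified by $D^\gamma f=h_{-\gamma}*f$ on flat causal $f$. The kernel $Q$ is supported in $\{s\le t\}$ because $h_{-\gamma}$ is causal. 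Then \Cref{lem:kernel-detection} applies: testing with $\psi\in C_c^\infty(I)\subset\M$ shows $Q=0$ in $\D'(I\times I)$.

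Since every $K\in\Kflat$ is smooth and flat to all orders in $t-s$ along the diagonal, \Cref{lem:separation} applies near each interior diagonal point. It gives $a_\gamma=0$ and $K=0$ locally there. Polynomials vanishing on an open interval vanish identically, so each $a_\gamma=0$. The identity $K=0$ holds for $0<s<t<T$ from the first step of the separation lemma applied on the open set $u>0$. By continuity it then holds on all of $\Delta_T$.

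Uniqueness is meant in the sense that the coefficients $p_\gamma$ are determined (zero coefficients are discarded) and the operator $V_K$ is determined. The bivariate function $K$ is also determined on $\Delta_T$, although its presentation as a sum in \eqref{eq:Kflat} is not.

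The main obstacle is making the distributional kernel representation rigorous. One must check that, for $f\in C_c^\infty(I)$, the pairing $\langle D^\gamma f,\varphi\rangle$ equals $\langle h_{-\gamma}(t-s),\varphi(t)f(s)\rangle$, including for negative $\gamma$ and for the diagonal-supported integer orders. I would verify this by analytic continuation in $\gamma$ of the tensor pairing, using \cite{Gelfand1964}.
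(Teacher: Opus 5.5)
Your proposal is correct and follows the paper's proof in substance. Existence comes from the ideal property of $\Vflat$ (\Cref{prop:ideal}) together with the rewrite rules \eqref{eq:rewrite1}, \eqref{eq:rewrite3} and the group law, and uniqueness comes from \Cref{lem:kernel-detection} followed by \Cref{lem:separation}. The only difference is bookkeeping in the existence step: you induct on word length by showing that normal forms are closed under right multiplication by generators, while the paper inducts on the inversion number of words built from multipliers and powers. Both versions are valid.
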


\begin{proof}
For existence, write $A$ as a finite linear combination of words in the generators. Any word containing a Volterra factor belongs to $\Vflat$ by \Cref{prop:ideal}, so it remains to normalize a word formed only from multipliers and fractional powers. Combine adjacent multipliers and adjacent powers whenever they occur. For such a word, let its inversion number be the number of pairs in which a power $D^\eta$ occurs to the left of a multiplier $M_p$. If the inversion number is positive, the word contains an adjacent subword $D^\eta M_p$. Applying \eqref{eq:rewrite1} replaces it by the finite sum
\begin{equation*}
\sum_{j=0}^{\deg p}\binom{\eta}{j}M_{p^{(j)}}D^{\eta-j}.
\end{equation*}
In every nonzero summand the indicated power has moved past that multiplier, so the inversion number decreases by one; combining adjacent powers by \eqref{eq:group-law} cannot increase it. Induction on the inversion number therefore terminates after finitely many steps and leaves a finite sum of terms $M_pD^\gamma$. Summing equal orders gives \eqref{eq:normal-form}, with all words containing a Volterra factor collected into one element of $\Vflat$.

For uniqueness, subtract two representations and denote the resulting causal distribution kernel by $Q$. For every $I\Subset(0,T)$, equality of the operators on $\M$ implies that $T_Q$ annihilates $C_c^\infty(I)$. \Cref{lem:kernel-detection} therefore gives $Q=0$ on the full square $I\times I$, including its diagonal. Near each interior diagonal point this identity has the form \eqref{eq:separation}, so \Cref{lem:separation} forces every coefficient and the flat remainder to vanish on $(0,T)$. Polynomiality of the coefficients and continuity of the flat kernel extend the conclusion to the endpoints.
\end{proof}

\begin{remark}[Comparison with earlier normal forms]\label{rem:normal-form-comparison}
The novelty of \Cref{thm:normal-form} is not the word ``normal form'' or any one rewrite identity in isolation. The integer-order algebra of \cite{Haghany2019} permits broader smooth coefficients and kernels but does not contain a group of arbitrary fractional powers. The sequential normal forms of \cite{Kleiner2022} organize generalized Riemann--Liouville derivative strings by remover and eliminator operators, but do not give the present polynomial coefficient--power decomposition modulo $\Vflat$. The theorem here is the combination of polynomial multipliers, powers indexed by an arbitrary additive subgroup $\Gamma\supset\Z$, the concrete two-sided ideal $\Vflat$, and uniqueness of the resulting finite decomposition. The closure results preceding the theorem further show that the multiplier class is maximal among smooth coefficients for finite reordering with $J$ modulo a diagonal-flat remainder and that $\Vflat$ is the minimal two-sided remainder ideal generated by the flat convolutions. No inclusion between these earlier algebras and $\A_\Gamma$ is asserted.
\end{remark}

\begin{example}[A normal-ordering calculation]\label{ex:normal-ordering}
Let $\alpha,\beta,\gamma\in\Gamma$ and $k\in\M$. The rewrite rules give
\begin{align*}
D^\alpha M_{t^2}D^\beta+M_tV_kD^\gamma
={}&M_{t^2}D^{\alpha+\beta}
+2\alpha M_tD^{\alpha+\beta-1}
+\alpha(\alpha-1)D^{\alpha+\beta-2}
+M_tV_{D^\gamma k}.
\end{align*}
The Leibniz tail stops after the second derivative of $t^2$, while $D^\gamma k\in\M$ and hence $M_tV_{D^\gamma k}\in\Vflat$. Thus the first three terms are the polynomial--fractional part of the normal form and the last term is its diagonal-flat Volterra remainder.
\end{example}

For $A\notin\Vflat$ in the form \eqref{eq:normal-form}, define
\begin{equation*}
\ord_\Gamma A=\max\{\gamma:p_\gamma\ne0\},
\end{equation*}
and put $\ord_\Gamma A=-\infty$ on $\Vflat$.

\begin{corollary}[Additivity and induced order filtration]\label{cor:order-filtration}
If $A,B\notin\Vflat$, then
\begin{equation}
\ord_\Gamma(AB)=\ord_\Gamma A+\ord_\Gamma B.
\end{equation}
For $\lambda\in\R$, set
\begin{equation*}
\mathcal F_\lambda=\{A\in\A_\Gamma:\ord_\Gamma A\le\lambda\}.
\end{equation*}
Then $(\mathcal F_\lambda)_{\lambda\in\R}$ is an increasing filtration by complex vector subspaces and
\begin{equation*}
\mathcal F_\lambda\mathcal F_\mu\subseteq\mathcal F_{\lambda+\mu}.
\end{equation*}
\end{corollary}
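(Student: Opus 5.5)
We use the unique normal form of \Cref{thm:normal-form} to write
\begin{equation*}
A=\sum_{\gamma\in F}p_\gamma D^\gamma+V_K,
\qquad
B=\sum_{\delta\in G}q_\delta D^\delta+V_L,
\end{equation*}
with $\alpha=\ord_\Gamma A$ and $\beta=\ord_\Gamma B$, so $p_\alpha\ne0$ and $q_\beta\ne0$. Expand $AB$ by distributivity. Every product containing $V_K$ or $V_L$ lies in $\Vflat$ by \Cref{prop:ideal}, so modulo $\Vflat$ only the terms $p_\gamma D^\gamma q_\delta D^\delta$ survive. We apply the rewrite rule \eqref{eq:rewrite1} and the group law \eqref{eq:group-law} to each of them:
\begin{equation*}
p_\gamma D^\gamma q_\delta D^\delta=\sum_{j=0}^{\deg q_\delta}\binom{\gamma}{j}p_\gamma q_\delta^{(j)}D^{\gamma+\delta-j}.
\end{equation*}
Each summand therefore has order at most $\gamma+\delta\le\alpha+\beta$. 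The order $\alpha+\beta$ is reached only with $\gamma=\alpha$, $\delta=\beta$, $j=0$. This is forced because $\gamma\le\alpha$, $\delta\le\beta$ and $j\ge0$, so equality requires all three to be extremal. After collecting orders, the coefficient of $D^{\alpha+\beta}$ in the normal form of $AB$ is exactly $p_\alpha q_\beta$.

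The key step is that $p_\alpha q_\beta\ne0$. This holds because $\C[t]$ is an integral domain. By the uniqueness part of \Cref{thm:normal-form}, the collected expression is the normal form of $AB$. Its coefficient at order $\alpha+\beta$ is nonzero, and all other coefficients are at orders $<\alpha+\beta$. Hence $AB\notin\Vflat$ and $\ord_\Gamma(AB)=\alpha+\beta$. The one subtle point is justifying that the leading coefficient is read off from the normal form. This is exactly where uniqueness is needed: without it the coefficient $p_\alpha q_\beta$ could in principle be cancelled by a Volterra remainder. Uniqueness rules this out, since $V_K$ has no component in any power $D^\gamma$.

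For the filtration, we use the convention $\ord_\Gamma=-\infty$ on $\Vflat$, and $-\infty+\mu=-\infty$. Each $\mathcal F_\lambda$ is a vector subspace. If $A,B\in\mathcal F_\lambda$, the normal form of $A+cB$ is obtained by adding the coefficients, again by uniqueness. Its coefficients at orders $>\lambda$ are therefore zero, so $\ord_\Gamma(A+cB)\le\lambda$, possibly with $A+cB\in\Vflat$. Monotonicity in $\lambda$ is immediate from the definition. Finally, take $A\in\mathcal F_\lambda$ and $B\in\mathcal F_\mu$.
\begin{itemize}
\item If either lies in $\Vflat$, then $AB\in\Vflat\subseteq\mathcal F_{\lambda+\mu}$ by \Cref{prop:ideal}.
\item Otherwise, additivity gives $\ord_\Gamma(AB)=\ord_\Gamma A+\ord_\Gamma B\le\lambda+\mu$.
\end{itemize}
In both cases $\mathcal F_\lambda\mathcal F_\mu\subseteq\mathcal F_{\lambda+\mu}$. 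The main obstacle is only the bookkeeping that no lower-order cross term can reach order $\alpha+\beta$, which the three inequalities $\gamma\le\alpha$, $\delta\le\beta$, $j\ge0$ settle.
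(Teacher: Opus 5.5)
Your proof is correct and follows the paper's argument: you expand modulo $\Vflat$ via \eqref{eq:rewrite1}, observe that order $\alpha+\beta$ arises only from $p_\alpha q_\beta D^{\alpha+\beta}$ with $p_\alpha q_\beta\ne0$ in $\C[t]$, and obtain the filtration from subadditivity of the order under sums together with the ideal property for factors in $\Vflat$. You also make explicit the role of uniqueness in \Cref{thm:normal-form} in reading off the leading coefficient, which the paper leaves implicit.
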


\begin{proof}
If the leading terms are $p(t)D^\alpha$ and $q(t)D^\beta$, the unique term of order $\alpha+\beta$ is $p(t)q(t)D^{\alpha+\beta}$. Every Leibniz correction has lower order, and $pq\ne0$ in $\C[t]$.

The normal form also gives $\ord_\Gamma(A+B)\le\max\{\ord_\Gamma A,\ord_\Gamma B\}$, with strict inequality allowed when leading terms cancel, and scalar multiplication does not increase order. Hence every $\mathcal F_\lambda$ is a vector subspace and the family is increasing. The product inclusion follows from the additivity relation in \Cref{cor:order-filtration} when neither factor lies in $\Vflat$; if one factor lies in $\Vflat$, the ideal property gives $AB\in\Vflat$ and the conclusion remains true.
\end{proof}

\begin{corollary}[Quotient domain and multiplicative leading symbol]\label{cor:quotient-domain}
The quotient algebra $\A_\Gamma/\Vflat$ has no zero divisors. Equivalently, if $A,B\notin\Vflat$, then $AB\notin\Vflat$. Moreover, if
\begin{align*}
A&=p(t)D^\alpha+\text{terms of order }<\alpha \pmod{\Vflat},\\
B&=q(t)D^\beta+\text{terms of order }<\beta \pmod{\Vflat},
\end{align*}
then the leading term of $AB$ modulo $\Vflat$ is $p(t)q(t)D^{\alpha+\beta}$.
\end{corollary}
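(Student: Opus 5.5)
The argument reduces to the normal form of \Cref{thm:normal-form} together with the additivity in \Cref{cor:order-filtration}. Take $A,B\notin\Vflat$ and write them in the unique form \eqref{eq:normal-form}:
\begin{equation*}
A=\sum_{\gamma\in F}p_\gamma D^\gamma+V_K,
\qquad
B=\sum_{\delta\in G}q_\delta D^\delta+V_L .
\end{equation*}
Because neither operator lies in $\Vflat$, uniqueness says that each polynomial--power part $A_0=\sum p_\gamma D^\gamma$ and $B_0=\sum q_\delta D^\delta$ is nonzero. Their leading orders are $\alpha=\ord_\Gamma A$ and $\beta=\ord_\Gamma B$, with nonzero leading coefficients $p=p_\alpha$ and $q=q_\beta$.

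Expanding the product gives $AB=A_0B_0+A_0V_L+V_KB_0+V_KV_L$. By \Cref{prop:ideal}, the last three terms belong to $\Vflat$, so $AB\equiv A_0B_0\pmod{\Vflat}$. The task is therefore to normalize $A_0B_0$. Each product $p_\gamma D^\gamma q_\delta D^\delta$ is rewritten with \eqref{eq:rewrite1} and \eqref{eq:group-law} as
\begin{equation*}
\sum_{j=0}^{\deg q_\delta}\binom{\gamma}{j}\,p_\gamma q_\delta^{(j)}\,D^{\gamma+\delta-j}.
\end{equation*}
All orders appearing here are at most $\gamma+\delta\le\alpha+\beta$. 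Equality $\gamma+\delta-j=\alpha+\beta$ requires $\gamma=\alpha$, $\delta=\beta$ and $j=0$. The coefficient of $D^{\alpha+\beta}$ in the normal form of $AB$ is therefore exactly $pq$, which is nonzero because $\C[t]$ is an integral domain.

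Uniqueness in \Cref{thm:normal-form} then shows that this nonzero coefficient cannot be absorbed into the $\Vflat$ part. Hence $AB\notin\Vflat$, so the quotient $\A_\Gamma/\Vflat$ has no zero divisors. The same computation shows that the leading term of $AB$ modulo $\Vflat$ is $p(t)q(t)D^{\alpha+\beta}$.

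To justify the word ``equivalently'', pass to the quotient. A product of classes $[A][B]=[AB]$ vanishes exactly when $AB\in\Vflat$, so the absence of zero divisors is the same as the implication $A,B\notin\Vflat\Rightarrow AB\notin\Vflat$. The quotient is also nontrivial, since $I=D^0\notin\Vflat$.

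The only delicate point is that the leading term is well defined modulo $\Vflat$. This is exactly the uniqueness part of \Cref{thm:normal-form}, which rests on the separation in \Cref{lem:separation}: it guarantees that a nonzero term $pqD^{\alpha+\beta}$ cannot be cancelled by a diagonal-flat kernel. Beyond that, the proof is bookkeeping already carried out in \Cref{cor:order-filtration}.
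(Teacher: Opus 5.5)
Your proof is correct and follows essentially the same route as the paper: you reduce $AB$ modulo $\Vflat$ to the product of the polynomial--power parts, use \eqref{eq:rewrite1} to see that the only contribution of order $\alpha+\beta$ is $pq\,D^{\alpha+\beta}$ with $pq\neq0$ in $\C[t]$, and conclude from uniqueness of the normal form. The paper simply cites the leading-order computation in \Cref{cor:order-filtration}, which you write out explicitly.
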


\begin{proof}
If the classes of $A$ and $B$ are nonzero, then $A,B\notin\Vflat$. By \Cref{cor:order-filtration}, $\ord_\Gamma(AB)=\ord_\Gamma A+\ord_\Gamma B$ is finite, so $AB\notin\Vflat$. The leading-term identity follows from the leading-order calculation in the proof of \Cref{cor:order-filtration}.
\end{proof}

\begin{remark}
The restrictions in the definition are structural. A nonpolynomial smooth multiplier generally produces an infinite fractional Leibniz tail; for example, $p(t)=e^t$ has $p^{(j)}=e^t$ for every $j$, so the generalized Leibniz expansion does not terminate for a noninteger order. Likewise, the full class of smooth Volterra kernels is not, in general, invariant under fractional differentiation. The algebra $\A_\Gamma$ therefore isolates a locally finite coefficient core together with a two-sided Volterra ideal stable under every $D^\gamma$, $\gamma\in\Gamma$.
\end{remark}

\subsection{Finite boundary jets and the homogeneous block}

The flat space $\M$ removes all endpoint corrections and is therefore the natural carrier of the group algebra above. Initial-value problems of maximal order $m$, however, involve only the finite jet of orders $0,\ldots,m-1$. This finite quotient has a canonical algebraic realization.

Fix $m\in\N$ and set
\begin{equation*}
e_k(t)=\frac{t^k}{k!},
\qquad
\varepsilon_k u=u^{(k)}(0),
\qquad 0\le k<m,
\end{equation*}
on $C^{m-1}[0,T]$. Define the boundary matrix units and the jet projection by
\begin{equation}
E_{jk}=e_j\varepsilon_k,
\qquad
\Pi_m=\sum_{k=0}^{m-1}E_{kk},
\qquad
Q_m=I-\Pi_m,
\label{eq:jet-projectors}
\end{equation}
and put
\begin{equation*}
\mathcal P_{m-1}=\Span\{e_0,\ldots,e_{m-1}\},
\qquad
\mathcal N_m=\bigcap_{k=0}^{m-1}\ker\varepsilon_k.
\end{equation*}

\begin{proposition}[Finite boundary-jet decomposition]\label{prop:boundary-jets}
The operators $E_{jk}$ satisfy
\begin{equation}
E_{ij}E_{k\ell}=\delta_{jk}E_{i\ell},
\qquad 0\le i,j,k,\ell<m.
\label{eq:matrix-units}
\end{equation}
Consequently, their span is a copy of the matrix algebra $M_m(\C)$, $\Pi_m^2=\Pi_m$, $Q_m^2=Q_m$, and
\begin{equation*}
C^{m-1}[0,T]=\mathcal P_{m-1}\oplus\mathcal N_m.
\end{equation*}
More explicitly, every $u\in C^{m-1}[0,T]$ has the unique decomposition
\begin{equation*}
u=P_{\mathbf d}+y,
\qquad
P_{\mathbf d}=\Pi_m u=\sum_{k=0}^{m-1}d_ke_k,
\qquad
d_k=\varepsilon_k u,
\qquad
y=Q_mu\in\mathcal N_m.
\end{equation*}
\end{proposition}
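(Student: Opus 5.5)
The whole statement rests on one duality relation between the jet functionals and the monomials,
\begin{equation*}
\varepsilon_j e_k=e_k^{(j)}(0)=\delta_{jk},\qquad 0\le j,k<m.
\end{equation*}
This holds because $e_k^{(j)}(t)=t^{k-j}/(k-j)!$ for $j\le k$ and vanishes for $j>k$, and evaluating at $t=0$ leaves only $j=k$. Each $E_{jk}=e_j\varepsilon_k$ is a rank-one operator on $C^{m-1}[0,T]$, mapping $u\mapsto u^{(k)}(0)e_j$. It is well defined since $k\le m-1$, and $e_j$ is a polynomial, hence lies in $C^{m-1}[0,T]$.

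\textbf{Step 1: the matrix-unit relations \eqref{eq:matrix-units}.} For any $u$,
\begin{equation*}
E_{ij}E_{k\ell}u=\varepsilon_\ell u\,E_{ij}e_k=\varepsilon_\ell u\,(\varepsilon_je_k)\,e_i=\delta_{jk}\,\varepsilon_\ell u\,e_i=\delta_{jk}E_{i\ell}u.
\end{equation*}

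\textbf{Step 2: the copy of $M_m(\C)$.} The map $e_{ij}\mapsto E_{ij}$ from the standard matrix units extends linearly to an algebra homomorphism, by Step 1. For injectivity, suppose $\sum c_{ij}E_{ij}=0$. Applying this to $e_\ell$ gives $\sum_i c_{i\ell}e_i=0$, and linear independence of the monomials forces $c_{i\ell}=0$. Alternatively, $M_m(\C)$ is simple and the image is nonzero.

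\textbf{Step 3: the projections.} From Step 1,
\begin{equation*}
\Pi_m^2=\sum_{k,\ell}E_{kk}E_{\ell\ell}=\sum_k E_{kk}=\Pi_m,
\end{equation*}
and $Q_m^2=I-2\Pi_m+\Pi_m^2=Q_m$.

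\textbf{Step 4: the direct sum.} First identify the two pieces:
\begin{itemize}[leftmargin=*]
\item $\operatorname{ran}\Pi_m=\mathcal P_{m-1}$, since $\Pi_m u=\sum_k(\varepsilon_ku)e_k$ and $\Pi_m e_k=e_k$.
\item $\ker\Pi_m=\mathcal N_m$, since the coefficients $\varepsilon_k u$ vanish exactly when $\Pi_m u=0$, by linear independence of the $e_k$.
\end{itemize}
For an idempotent, $X=\operatorname{ran}\Pi_m\oplus\ker\Pi_m$, with $u=\Pi_mu+Q_mu$. It is also convenient to check directly that $\varepsilon_jQ_mu=\varepsilon_ju-\sum_k\varepsilon_ku\,\delta_{jk}=0$, so $Q_mu\in\mathcal N_m$.

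For uniqueness, suppose $u=P+y$ with $P=\sum c_ke_k\in\mathcal P_{m-1}$ and $y\in\mathcal N_m$. Applying $\varepsilon_j$ gives $c_j=\varepsilon_ju$. Hence $P=P_{\mathbf d}$ with $d_k=\varepsilon_k u$, and $y=u-P=Q_mu$.

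No step here is a genuine obstacle. The only point needing care is linear independence of $e_0,\ldots,e_{m-1}$ in $C^{m-1}[0,T]$, which itself follows from the duality relation. This independence is used both for faithfulness of the $M_m(\C)$ copy and for identifying $\ker\Pi_m$ with $\mathcal N_m$.
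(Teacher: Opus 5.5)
Your proof is correct and follows the paper's own argument. The duality $\varepsilon_je_k=\delta_{jk}$ gives the matrix-unit relations by direct composition, the idempotent identities follow from them, and the direct sum comes from $Q_mu\in\mathcal N_m$ together with the fact that a polynomial in $\mathcal P_{m-1}$ is determined by its first $m$ jets. Your explicit faithfulness check for the copy of $M_m(\C)$, obtained by evaluating on $e_\ell$, fills in a step the paper leaves implicit.
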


\begin{proof}
Since $\varepsilon_j(e_k)=\delta_{jk}$, direct composition gives \eqref{eq:matrix-units}. The identities for $\Pi_m$ and $Q_m$ follow. Moreover, $\varepsilon_k(Q_mu)=0$ for $0\le k<m$, so $Q_mu\in\mathcal N_m$, while $\Pi_mu\in\mathcal P_{m-1}$. Their intersection is zero because a polynomial in $\mathcal P_{m-1}$ is determined by its first $m$ endpoint derivatives. This proves both the direct sum and uniqueness.
\end{proof}

\begin{remark}[Role of the boundary block]
\Cref{prop:boundary-jets} supplies the finite-dimensional boundary coordinates absent from the flat core and couples them to the analytic factorization in \Cref{thm:boundary-factorization}. It leaves the homogeneous normal-form statement of \Cref{thm:normal-form} unchanged while providing the affine slices needed for prescribed initial data.
\end{remark}

Within $C^\infty[0,T]$, one has $\M=\bigcap_{m\ge1}\mathcal N_m$. This identity identifies the finite-normal-form calculus as the fully homogeneous block underlying every finite-jet problem. The matrix units restore the endpoint information suppressed on $\M$ without modifying \Cref{thm:normal-form}. For a fixed maximal order they provide a finite-dimensional algebraic interface; the analytic extension below attaches that interface to the completed smoothing sector through affine domains of the form $\mathcal P_{m-1}\oplus R(C[0,T])$.

\section{Completions and the boundary-extended causal-inverse framework}

We now pass from the finite-normal-form layer to an analytic one. The normal form of Section~2 remains a finite statement about $\A_\Gamma$, while the solution theory is formulated on natural ranges that may be larger than $\M$.  The next proposition shows that this change of domain does not discard the finite normal form: a monic normal-form operator first factors canonically on $\M$, and the same factorization has a compatible natural-range extension.  The completion that follows then enlarges only the negative-order causal convolution sector. The independently prescribed continuous kernels $K(t,s)$ in Sections~3.2--4 enter as external Volterra operators; composition with a smoothing causal inverse connects them to the preceding algebraic framework through the explicit reduced kernel \eqref{eq:GL}. Two different topologies are natural for the infinite smoothing and inverse expressions involved in this passage.

\begin{proposition}[From a finite normal form to its natural-range factorization]
\label{prop:core-range-bridge}
Let $\alpha\in\Gamma$, $\alpha>0$, and let
\begin{equation}
L_0=D^\alpha+\sum_{j=1}^{N}p_j(t)D^{\beta_j}+V_{K_0},
\qquad
p_j\in\Pcal,\qquad \beta_j\in\Gamma,\qquad \beta_j<\alpha,
\quad K_0\in\Kflat,
\label{eq:monic-normal-form}
\end{equation}
be a monic operator in the finite normal form of \Cref{thm:normal-form}.
Put $X=C[0,T]$, $R_\alpha=J^\alpha:X\to X$, and let
$A_\alpha=R_\alpha^{-1}$ on the natural range
$\D(A_\alpha)=J^\alpha X$.  Then:
\begin{enumerate}[label=\textup{(\roman*)},leftmargin=*]
\item
\begin{equation}
\M=J^\alpha\M\subset J^\alpha X,
\qquad
A_\alpha|_{\M}=D^\alpha.
\label{eq:core-range-compatibility}
\end{equation}
\item The operator
\begin{equation}
W_0=
\sum_{j=1}^{N}M_{p_j}J^{\alpha-\beta_j}
+V_{K_0}J^\alpha
\label{eq:algebraic-W0}
\end{equation}
extends from $\M$ to a bounded Volterra operator $V_{G_0}$ on $X$, with
\begin{equation}
G_0(t,s)=
\sum_{j=1}^{N}p_j(t)h_{\alpha-\beta_j}(t-s)
+\int_s^tK_0(t,\xi)h_\alpha(\xi-s)\dd\xi.
\label{eq:algebraic-reduced-kernel}
\end{equation}
\item On $\M$ one has the canonical right factorization
\begin{equation}
L_0=(I+W_0)D^\alpha.
\label{eq:algebraic-factorization}
\end{equation}
Consequently,
\begin{equation}
\widetilde L_0:=(I+V_{G_0})A_\alpha,
\qquad \D(\widetilde L_0)=J^\alpha X,
\label{eq:natural-range-extension}
\end{equation}
is a natural-range extension of $L_0$, in the precise sense that
$\widetilde L_0|_{\M}=L_0$.
\end{enumerate}
\end{proposition}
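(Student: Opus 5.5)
The plan is to prove the three items in order. Each step uses only \Cref{lem:flat-invariance}, the group law \eqref{eq:group-law}, the injectivity \Cref{lem:injectivity}, and elementary bounds for weakly singular kernels.

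For (i), \Cref{lem:flat-invariance} with $-\alpha$ shows that $J^\alpha=D^{-\alpha}$ maps $\M$ bijectively onto $\M$. Hence $\M=J^\alpha\M$, and since $\M\subset X$ we get $J^\alpha\M\subset J^\alpha X$. The kernel $h_\alpha$ lies in $L^1(0,T)$, is positive on $(0,T)$, and so does not vanish on any initial interval. \Cref{lem:injectivity} therefore makes $R_\alpha$ injective on $X$, so $A_\alpha$ is well defined on $J^\alpha X$. Now take $f\in\M$ and set $g=D^\alpha f\in\M$. The group law gives $J^\alpha g=D^{-\alpha}D^\alpha f=f$, hence $A_\alpha f=g=D^\alpha f$.

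For (ii), I treat the two parts of $W_0$ separately.
\begin{itemize}
\item Since $\alpha-\beta_j>0$, each $M_{p_j}J^{\alpha-\beta_j}$ is already a bounded Volterra operator on $X$ with kernel $p_j(t)h_{\alpha-\beta_j}(t-s)$. Its norm is bounded by $\sup|p_j|\,T^{\alpha-\beta_j}/\Gamma(\alpha-\beta_j+1)$, and it maps $X$ into $X$ because $J^\mu$ does for $\mu>0$.
\item $V_{K_0}J^\alpha$ is a composition of bounded operators on $X$, since $K_0$ is continuous on $\Delta_T$. Fubini on the triangle $s\le\xi\le t$ gives its kernel as $\int_s^tK_0(t,\xi)h_\alpha(\xi-s)\dd\xi$. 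This kernel is continuous and bounded by $\|K_0\|_\infty T^\alpha/\Gamma(\alpha+1)$.
\end{itemize}
Summing yields $V_{G_0}$ with $G_0$ as in \eqref{eq:algebraic-reduced-kernel}. By construction $V_{G_0}$ agrees with $W_0$ on $\M$, because on $\M$ the operators $J^{\alpha-\beta_j}=D^{\beta_j-\alpha}$ are the same integrals. The only point needing care is that $h_{\alpha-\beta_j}$ may be weakly singular. It is still integrable, so Young's inequality for causal convolution keeps $V_{G_0}$ bounded and continuity-preserving.

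For (iii), on $\M$ the group law gives $D^{\beta_j}=D^{\beta_j-\alpha}D^\alpha=J^{\alpha-\beta_j}D^\alpha$ and $I=J^\alpha D^\alpha$. Hence $V_{K_0}=V_{K_0}J^\alpha D^\alpha$ and $p_jD^{\beta_j}=M_{p_j}J^{\alpha-\beta_j}D^\alpha$, which summed gives \eqref{eq:algebraic-factorization}. Then for $f\in\M$, item (i) gives $A_\alpha f=D^\alpha f\in\M$, and item (ii) gives $V_{G_0}$ agreeing with $W_0$ there. So $\widetilde L_0f=(I+W_0)D^\alpha f=L_0f$.

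I expect the main obstacle to be the bookkeeping in (i): injectivity of $J^\alpha$ on $X$ is needed so that $A_\alpha$ is single-valued, and $A_\alpha$ must agree with $D^\alpha$ rather than with some other preimage. \Cref{lem:injectivity} settles both points.
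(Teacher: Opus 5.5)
Your proposal is correct and follows essentially the same route as the paper: flat invariance for (i), Fubini for the composed kernel in (ii), and the group-law identities $D^{\beta_j}=J^{\alpha-\beta_j}D^\alpha$, $I=J^\alpha D^\alpha$ for (iii). The one addition is your explicit check, via \Cref{lem:injectivity}, that $J^\alpha$ is injective on $X$ so that $A_\alpha$ is well defined. The paper takes this for granted, and your extra step is harmless.
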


\begin{proof}
By \Cref{lem:flat-invariance}, $J^\alpha$ maps $\M$ bijectively onto
$\M$, with inverse $D^\alpha$.  This proves
\eqref{eq:core-range-compatibility}.  Since $\beta_j<\alpha$, the group law on
$\M$ gives
\begin{equation*}
D^{\beta_j}=J^{\alpha-\beta_j}D^\alpha,
\qquad
I=J^\alpha D^\alpha.
\end{equation*}
Hence
\begin{equation*}
p_jD^{\beta_j}=M_{p_j}J^{\alpha-\beta_j}D^\alpha,
\qquad
V_{K_0}=V_{K_0}J^\alpha D^\alpha,
\end{equation*}
and summation proves \eqref{eq:algebraic-factorization}.  The first term in
\eqref{eq:algebraic-W0} is the Volterra operator with kernel
$p_j(t)h_{\alpha-\beta_j}(t-s)$.  Fubini's theorem gives
\begin{equation*}
(V_{K_0}J^\alpha f)(t)
=\int_0^t\left[\int_s^tK_0(t,\xi)h_\alpha(\xi-s)\dd\xi\right]f(s)\dd s,
\end{equation*}
which proves \eqref{eq:algebraic-reduced-kernel}.  These kernels are
integrable on every finite Volterra triangle, so $V_{G_0}$ is bounded on
$X$.  Finally, \eqref{eq:core-range-compatibility} and
\eqref{eq:algebraic-factorization} give
$\widetilde L_0f=L_0f$ for $f\in\M$.
\end{proof}

No uniqueness among all possible operator extensions is asserted; $\widetilde L_0$ is the canonical natural-range extension induced by the right factorization \eqref{eq:algebraic-factorization} and the inverse $A_\alpha$.

\begin{remark}
The proposition identifies the exact element of the finite-normal-form
construction that enters the solution theory: the unique leading power
$D^\alpha$ selects the causal inverse $J^\alpha$, and every lower power becomes
a smoothing factor $J^{\alpha-\beta_j}$ after right factorization.  The general
theory below enlarges this algebraically generated class in three controlled
ways: it completes the negative-order convolution sector, allows continuous
perturbation coefficients, and admits an independent continuous bivariate
Volterra kernel. The monicity hypothesis specifies the scope of this direct
bridge. If the leading term of a general
normal form is $M_aD^\alpha$, a nowhere-vanishing $a$ can be divided out at the
equation level, but $1/a$ need not be polynomial and that normalization need
not remain inside $\A_\Gamma$.  If $a$ vanishes, no global reduction of this
kind is available.  Thus \Cref{thm:normal-form} applies to all elements of the
stated algebra, whereas the direct natural-range bridge established here is
asserted only for the monic subclass.
\end{remark}

\subsection{Kernel-norm completion and Prabhakar integrals}

Let
\begin{equation*}
\mathcal S_{\Gamma,T}^{\mathrm{fin}}
=\Span\{J^\beta:\beta\in\Gamma,\ \beta>0\}.
\end{equation*}
Using $J^\beta=V_{h_\beta}$, associate to $S=\sum_{j=1}^Nc_jJ^{\beta_j}$ the kernel $k_S=\sum_jc_jh_{\beta_j}$ and set
\begin{equation*}
\|S\|_{\mathrm{ker}}=\|k_S\|_{L^1(0,T)}.
\end{equation*}

\begin{proposition}[Smoothing completion]\label{prop:smoothing}
The completion of $\mathcal S_{\Gamma,T}^{\mathrm{fin}}$ in the kernel norm is a commutative Banach convolution algebra. Because $\Z\subset\Gamma$, its kernel completion is $L^1(0,T)$; after adjoining the identity it is identified with
\begin{equation*}
\C I\oplus\{V_k:k\in L^1(0,T)\}.
\end{equation*}
The unitalized algebra is equipped with the norm
\begin{equation*}
\|cI+V_k\|_{\mathrm{un}}:=|c|+\|k\|_{L^1(0,T)}.
\end{equation*}
Moreover,
\begin{equation*}
\|V_k\|_{C[0,T]\to C[0,T]}\le\|k\|_{L^1(0,T)}.
\end{equation*}
\end{proposition}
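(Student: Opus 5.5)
The plan is to identify the completion concretely through the map $S\mapsto k_S$ into $L^1(0,T)$, and then to prove density of the image.

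\textbf{Step 1: the map is an isometric algebra homomorphism.} By construction the map $\mathcal S_{\Gamma,T}^{\mathrm{fin}}\to L^1(0,T)$, $S\mapsto k_S$, is linear and isometric for $\|\cdot\|_{\mathrm{ker}}$. It is well defined and injective because the kernel determines the operator; injectivity can also be read off from the distinct powers $t^{\beta-1}$. It is multiplicative because $J^\alpha J^\beta=J^{\alpha+\beta}$ and $h_\alpha*h_\beta=h_{\alpha+\beta}$, so $k_{ST}=k_S*k_T$, with the convolution truncated to $(0,T)$. Young's inequality on the Volterra triangle gives $\|k*\ell\|_{L^1(0,T)}\le\|k\|_{L^1}\|\ell\|_{L^1}$. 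Hence $L^1(0,T)$ with truncated causal convolution is a commutative Banach algebra. The completion of $\mathcal S^{\mathrm{fin}}_{\Gamma,T}$ is therefore the closure of $\Span\{h_\beta:\beta\in\Gamma,\beta>0\}$ in $L^1(0,T)$, which is again a commutative Banach convolution algebra.

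\textbf{Step 2: density, the main point.} Since $\Z\subset\Gamma$, the closure contains $h_n=t^{n-1}/(n-1)!$ for every $n\ge1$, and so it contains all polynomials. Polynomials are dense in $C[0,T]$ by Weierstrass, and $C[0,T]$ is dense in $L^1(0,T)$. The inequality $\|\cdot\|_{L^1}\le T\|\cdot\|_\infty$ then shows that polynomials are dense in $L^1(0,T)$. Thus the kernel completion is all of $L^1(0,T)$, whatever the noninteger part of $\Gamma$ is. The only subtlety is that noninteger $h_\beta$ with $\beta<1$ lie in $L^1$ but not in $C$; they cause no difficulty because we only need containment in $L^1$, which holds since $\beta>0$.

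\textbf{Step 3: operator bound and unitalization.} For $k\in L^1$ and $f\in C[0,T]$, the function $V_kf$ is continuous. This holds by continuity of translation in $L^1$, or by approximating $k$ by continuous kernels and passing to the limit uniformly. The estimate $|V_kf(t)|\le\int_0^t|k(t-s)|\dd s\,\|f\|_\infty\le\|k\|_{L^1}\|f\|_\infty$ gives the stated operator bound. This bound also shows that the abstract completion acts on $C[0,T]$ through $k\mapsto V_k$. By \Cref{lem:injectivity} (or the Titchmarsh theorem for general $k\neq0$), distinct kernels give distinct operators, so the identification with $\{V_k\}$ is faithful.

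Next, adjoin $I$ with the norm $|c|+\|k\|_{L^1}$. This is the standard unitization. Submultiplicativity follows from $(cI+V_k)(dI+V_\ell)=cdI+V_{c\ell+dk+k*\ell}$ together with the inequality from Step 1. The sum $\C I\oplus\{V_k\}$ is direct because $I$ is not a Volterra operator with $L^1$ kernel. To see this, note that $V_k$ maps $C[0,T]$ into functions vanishing at $0$, while $I$ does not.
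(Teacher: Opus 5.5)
Your proposal is correct and follows essentially the same route as the paper. Both arguments identify $S\mapsto k_S$ isometrically with the span of the $h_\beta$ in $L^1(0,T)$, use Young's inequality for the Banach-algebra structure, obtain density from the polynomial kernels $h_n$ supplied by $\Z\subset\Gamma$, and use the unitization product formula and the pointwise estimate for the operator bound. Your extra checks fill in details the paper leaves implicit: continuity of $V_kf$, faithfulness of $k\mapsto V_k$, and directness of $\C I\oplus\{V_k\}$ via $(V_kf)(0)=0$. For faithfulness the cleanest justification is that $(V_k\one)(t)=\int_0^t k(u)\dd u=0$ for all $t$ forces $k=0$ a.e.
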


\begin{proof}
The map $S\mapsto k_S$ identifies the finite operator span injectively with its finite kernel span: if $k_S=0$ in $L^1(0,T)$, then the corresponding generalized-power combination vanishes on $(0,T)$, and separation of the distinct powers gives $S=0$. Thus $\|S\|_{\mathrm{ker}}=\|k_S\|_1$ is a norm, and the completion in question is precisely the completion of this kernel realization. The identity $h_\alpha*h_\beta=h_{\alpha+\beta}$ makes the finite kernel span a convolution algebra, and Young's inequality gives $\|k*\ell\|_1\le\|k\|_1\|\ell\|_1$. Because $\Z\subset\Gamma$, all $J^n$, $n\ge1$, occur, with kernels $t^{n-1}/(n-1)!$; hence their linear span is the set of polynomial kernels. Density of the polynomials in $L^1(0,T)$ proves that the kernel-norm completion is $L^1(0,T)$.
For the unitalization, the identity
\begin{equation*}
(cI+V_k)(dI+V_\ell)=cdI+V_{c\ell+dk+k*\ell}
\end{equation*}
and Young's inequality show that $\|\cdot\|_{\mathrm{un}}$ is submultiplicative and complete. Finally,
\begin{equation*}
|(V_kf)(t)|\le\|f\|_\infty\int_0^t|k(t-s)|\dd s
\end{equation*}
gives the operator bound.
\end{proof}

\begin{remark}
Proposition~\ref{prop:core-range-bridge} gives the algebraic and domain-theoretic bridge from a finite normal form to the analytic factorization.  The preceding smoothing-completion proposition performs the next, distinct step: it identifies the completed convolution sector on which more general causal inverses are available.  The negative-order convolution sector consists of finite sums, but its kernel-norm closure is $L^1(0,T)$; hence every admissible smoothing inverse $R=V_r$ with $r\in L^1(0,T)$, including every integrable Sonine kernel, belongs to this completed sector. The causal theory below supplements this membership by injectivity and quantitative weak-singularity bounds and, for the Prabhakar family, by the explicit norm-convergent expansion in the next proposition. An independently prescribed continuous bivariate operator $V_K$ is not absorbed into this completion: only after composition with $R$ does it enter the single reduced kernel \eqref{eq:GL}. This is the mechanism that extends the finite-normal-form factorization to the nonconvolution equations of Sections~3.2--4.
The density argument identifies the precise analytic closure through which the finite algebraic smoothing sector reaches the Sonine and Prabhakar kernels used later.
\end{remark}

For $\rho,\mu>0$ and $\gamma,z\in\C$, define the three-parameter Mittag--Leffler function
\begin{equation*}
E_{\rho,\mu}^{\gamma}(z)
=\sum_{n=0}^{\infty}\frac{(\gamma)_n}{n!\,\Gamma(\rho n+\mu)}z^n
\end{equation*}
and the Prabhakar integral
\begin{equation}
(\E_{\rho,\mu,\omega}^{\gamma}f)(t)
=\int_0^t(t-s)^{\mu-1}E_{\rho,\mu}^{\gamma}\bigl(\omega(t-s)^\rho\bigr)f(s)\dd s.
\end{equation}
Here the roman $E_{\rho,\mu}^{\gamma}$ denotes the Mittag--Leffler function, whereas the calligraphic $\E_{\rho,\mu,\omega}^{\gamma}$ denotes the associated convolution operator.

\begin{proposition}[Prabhakar expansion]\label{prop:prabhakar-expansion}
Let $\rho,\mu>0$ and $\gamma,\omega\in\C$. If $\Gamma$ contains $\rho$ and $\mu$, then $\E_{\rho,\mu,\omega}^{\gamma}$ belongs to the kernel-norm completion of the smoothing sector $\mathcal S_{\Gamma,T}^{\mathrm{fin}}$ associated with $\A_\Gamma$, and
\begin{equation}
\E_{\rho,\mu,\omega}^{\gamma}
=\sum_{n=0}^{\infty}\frac{(\gamma)_n\omega^n}{n!}J^{\mu+\rho n},
\label{eq:prabhakar-expansion}
\end{equation}
with absolute convergence in the kernel norm on every finite interval.
\end{proposition}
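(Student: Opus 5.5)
Expanding the Mittag--Leffler series inside the kernel of $\E_{\rho,\mu,\omega}^{\gamma}$ shows that, for $0<u\le T$,
\begin{equation*}
u^{\mu-1}E_{\rho,\mu}^{\gamma}(\omega u^\rho)
=\sum_{n=0}^{\infty}\frac{(\gamma)_n\omega^n}{n!}\,\frac{u^{\mu+\rho n-1}}{\Gamma(\mu+\rho n)}
=\sum_{n=0}^{\infty}\frac{(\gamma)_n\omega^n}{n!}\,h_{\mu+\rho n}(u).
\end{equation*}
Each partial sum is the kernel of a finite element of $\mathcal S_{\Gamma,T}^{\mathrm{fin}}$, since $\mu+\rho n\in\Gamma$ by the additivity of $\Gamma$ and $\mu+\rho n>0$. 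The plan is therefore to prove absolute convergence of this series in $L^1(0,T)$. \Cref{prop:smoothing} then identifies the limit as an element of the kernel-norm completion, and the operator bound $\|V_k\|\le\|k\|_1$ identifies it with the Prabhakar operator.

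\textbf{Step 1 (termwise norms).} Compute
\begin{equation*}
\|h_\beta\|_{L^1(0,T)}=\frac{T^\beta}{\Gamma(\beta+1)},\qquad \beta>0,
\end{equation*}
so the series of norms is
\begin{equation*}
\sum_n \frac{|(\gamma)_n|\,|\omega|^n}{n!}\,\frac{T^{\mu+\rho n}}{\Gamma(\mu+\rho n+1)}.
\end{equation*}

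\textbf{Step 2 (convergence; the main obstacle).} Bound $|(\gamma)_n|\le(|\gamma|)_n$. Since $(|\gamma|)_n/n!=\Gamma(n+|\gamma|)/(\Gamma(|\gamma|)\,n!)$ grows at most polynomially in $n$ (when $\gamma=0$ only the $n=0$ term survives), while $\Gamma(\mu+\rho n+1)$ grows superexponentially because $\rho>0$, the ratio test or Stirling's formula gives convergence for every $\omega\in\C$ and every $T$. Equivalently, one may compare with the entire function $E^{|\gamma|}_{\rho,\mu+1}(|\omega|T^\rho)$. This is the only genuinely analytic step, and it is routine.

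\textbf{Step 3 (identification).} Absolute convergence in $L^1(0,T)$ shows that the partial sums form a Cauchy sequence in the kernel norm, so the limit kernel $k$ lies in the completion described in \Cref{prop:smoothing}. Absolute convergence also dominates the pointwise series for a.e.\ $u$, via monotone convergence applied to $\sum|c_n|h_{\mu+\rho n}$. Hence $k(u)=u^{\mu-1}E^{\gamma}_{\rho,\mu}(\omega u^\rho)$ a.e., and $V_k=\E^{\gamma}_{\rho,\mu,\omega}$. Finally, $\|V_k\|_{C\to C}\le\|k\|_1$ transfers the kernel-norm convergence to operator-norm convergence of \eqref{eq:prabhakar-expansion} on $C[0,T]$. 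Since $T$ was arbitrary, the expansion holds on every finite interval.
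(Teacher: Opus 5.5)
Your proposal is correct and follows the paper's proof: you expand the Mittag--Leffler series termwise, bound the series by the kernel-norm majorant $\sum_n |(\gamma)_n|\,|\omega|^n T^{\mu+\rho n}/(n!\,\Gamma(\mu+\rho n+1))$, and play the polynomial growth of $(\gamma)_n/n!$ against the superexponential growth of the gamma factor. Your Step~3 identifies the $L^1$ limit with the Prabhakar kernel almost everywhere and transfers convergence to operator norm via $\|V_k\|_{C\to C}\le\|k\|_1$; this only spells out what the paper compresses into ``termwise integration is justified in $L^1(0,T)$.''
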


\begin{proof}
Substitution of the Mittag--Leffler series into the integral gives \eqref{eq:prabhakar-expansion} formally. Its kernel-norm majorant is
\begin{equation*}
\sum_{n=0}^{\infty}\frac{|(\gamma)_n|\,|\omega|^n}{n!}
\frac{T^{\mu+\rho n}}{\Gamma(\mu+\rho n+1)}.
\end{equation*}
The quotient $(\gamma)_n/n!$ grows at most polynomially, whereas the gamma factor in the denominator dominates every fixed exponential. Hence the series converges and termwise integration is justified in $L^1(0,T)$.
\end{proof}

Write
\begin{equation}
r_{\rho,\mu,\omega}^{\gamma}(t)
=t^{\mu-1}E_{\rho,\mu}^{\gamma}(\omega t^\rho).
\label{eq:prabhakar-kernel}
\end{equation}
On every finite interval,
\begin{equation*}
|r_{\rho,\mu,\omega}^{\gamma}(t)|\le C_Tt^{\mu-1},
\qquad 0<t\le T,
\end{equation*}
and the defining series shows
\begin{equation*}
r_{\rho,\mu,\omega}^{\gamma}(t)
=\frac{t^{\mu-1}}{\Gamma(\mu)}\bigl(1+O(t^\rho)\bigr),
\qquad t\downarrow0.
\end{equation*}
Consequently, \Cref{lem:injectivity} proves directly that $\E_{\rho,\mu,\omega}^{\gamma}$ is injective on $C[0,T]$. No regularized derivative is applied outside its classical domain.

\subsection{Admissible causal inverses and factorization}

Set $X=C[0,T]$. The perturbation argument needs only an injective smoothing inverse and power-type control of its compositions with the lower operators.
From this point onward, the coefficients $p_j\in C[0,T]$ are analytic perturbation coefficients. Unless they are polynomials, they are not asserted to be multipliers in the finite normal-form algebra $\A_\Gamma$; their role belongs solely to the causal-factorization layer.
When $R=J^\alpha$, $B_j=D^{\beta_j}$, the $p_j$ are polynomial, and the
Volterra kernel is the normal-form remainder $K_0$, the construction below
restricts on $\M$ to \eqref{eq:algebraic-factorization}.  Thus
\Cref{prop:factorization} is an analytic generalization of the bridge
proposition, not an unrelated second factorization.

\begin{definition}
An admissible causal inverse is an injective convolution operator
\begin{equation*}
R=V_r:X\to X,
\qquad r\in L^1(0,T),
\end{equation*}
such that, for some $C_0>0$ and $\eta_0>0$,
\begin{equation*}
|r(t)|\le C_0t^{\eta_0-1}
\qquad\text{for a.e. }0<t\le T.
\end{equation*}
Its inverse is denoted by $A=R^{-1}$ on the natural domain
\begin{equation*}
\D(A)=R(X).
\end{equation*}
\end{definition}

Let $B_1,\ldots,B_N$ be linear operators on $\D(A)$ satisfying
\begin{equation}
B_jR=V_{q_j},
\qquad q_j\in L^1(0,T),
\qquad |q_j(t)|\le C_jt^{\eta_j-1}\ \text{a.e.},
\quad \eta_j>0.
\label{eq:BjR}
\end{equation}
For $p_j\in C[0,T]$ and $K\in C(\Delta_T)$, define
\begin{equation}
Lu=Au+\sum_{j=1}^{N}p_j(t)B_ju(t)+\int_0^tK(t,s)u(s)\dd s.
\label{eq:L}
\end{equation}
For $v\in X$, let $W_L=V_{G_L}$, where
\begin{equation}
G_L(t,s)=\sum_{j=1}^{N}p_j(t)q_j(t-s)
+\int_s^tK(t,\xi)r(\xi-s)\dd\xi.
\label{eq:GL}
\end{equation}
The kernels are understood through measurable representatives. Young's inequality shows that every $V_{q_j}$ maps $X$ continuously into itself. The integral part of $V_{G_L}$ is the composition $V_KR$, which also maps $X$ into itself because $K\in C(\Delta_T)$. Hence $W_L=V_{G_L}$ is a bounded operator $X\to X$, even though $G_L$ need not extend continuously to the diagonal.

\begin{proposition}[Causal factorization]\label{prop:factorization}
On $\D(A)$,
\begin{equation}
L=(I+W_L)A.
\label{eq:factorization}
\end{equation}
\end{proposition}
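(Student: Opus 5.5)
The plan is to verify the identity pointwise by the substitution $u=Rv$. Since $\D(A)=R(X)$ and $R$ is injective by admissibility, every $u\in\D(A)$ can be written as $u=Rv$ for a unique $v\in X$, and then $Au=v$. Both sides of \eqref{eq:factorization} are linear in $u$. It therefore suffices to show
\begin{equation*}
L(Rv)=v+W_Lv,\qquad v\in X,
\end{equation*}
because the right-hand side equals $(I+W_L)Au$.

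I will expand $L(Rv)$ using \eqref{eq:L} and handle its three pieces in turn.
\begin{enumerate}[label=(\roman*),leftmargin=*]
\item The leading piece is $A(Rv)=v$, by the definition of $A$ as $R^{-1}$ on $R(X)$.
\item For the lower-order terms, hypothesis \eqref{eq:BjR} gives $B_j(Rv)=V_{q_j}v$. Multiplying by $p_j(t)$ yields $\int_0^t p_j(t)q_j(t-s)v(s)\dd s$, which is exactly the first summand of $G_L$ in \eqref{eq:GL}.
\item For the bivariate term, I compute $\int_0^tK(t,\xi)\int_0^\xi r(\xi-s)v(s)\dd s\dd\xi$. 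Fubini's theorem on the triangle $0\le s\le\xi\le t$ turns this into $\int_0^t\bigl[\int_s^tK(t,\xi)r(\xi-s)\dd\xi\bigr]v(s)\dd s$, which is the second summand of $G_L$.
\end{enumerate}
Summing the three pieces gives $v+V_{G_L}v$.

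The only real obstacle is justifying Fubini in (iii), since $r$ is merely integrable and possibly singular. I will check absolute integrability directly. Because $K$ is continuous on $\Delta_T$ and $v$ is continuous on $[0,T]$,
\begin{equation*}
\int_0^t\int_0^\xi|K(t,\xi)||r(\xi-s)||v(s)|\dd s\dd\xi\le\|K\|_\infty\|v\|_\infty\,T\,\|r\|_{L^1(0,T)}<\infty.
\end{equation*}
So Tonelli followed by Fubini applies for each fixed $t$. For the same reason, $G_L(t,\cdot)$ is defined almost everywhere as an integrable function, so $V_{G_L}v(t)$ makes sense pointwise and agrees with the bounded operator $W_L$ described before the proposition.

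A minor point is that the identity holds for every $t\in[0,T]$, not merely almost everywhere. Each piece is continuous in $t$: $V_{q_j}v\in X$, and $V_KRv\in X$ because $K$ is continuous and $Rv\in X$. The pointwise identity therefore holds everywhere in $X$, which completes the plan.
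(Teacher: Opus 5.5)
Your proposal is correct and follows essentially the same route as the paper: substitute $u=Rv$ so that $Au=v$, use $B_jR=V_{q_j}$ for the lower-order terms, and apply Fubini on the triangle to identify $V_KR$ with the second summand of $G_L$. Your explicit absolute-integrability bound $\|K\|_\infty\|v\|_\infty T\|r\|_{L^1(0,T)}$ and the continuity check are details the paper's two-line proof leaves implicit.
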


\begin{proof}
Write $u=Rv$, so that $Au=v$. By \eqref{eq:BjR}, $p_jB_ju=p_jV_{q_j}v$. Fubini's theorem gives
\begin{equation*}
\int_0^tK(t,s)(Rv)(s)\dd s
=\int_0^t\left[\int_\tau^tK(t,s)r(s-\tau)\dd s\right]v(\tau)\dd\tau.
\end{equation*}
The bracketed expression is the second term of \eqref{eq:GL}, which proves \eqref{eq:factorization}.
\end{proof}

\subsection{Weakly singular resolvents and the general perturbation theorem}

Put
\begin{equation*}
\eta=\min\bigl(\{1\}\cup\{\eta_j:1\le j\le N\}\bigr)>0.
\end{equation*}
The integral term in \eqref{eq:GL} is $O((t-s)^{\eta_0})$. Hence, after increasing a constant if necessary,
\begin{equation*}
|G_L(t,s)|\le C_L(t-s)^{\eta-1}
\qquad\text{for a.e. }0\le s<t\le T.
\end{equation*}

We use the standard weakly singular Volterra resolvent construction; see, for example, \cite{Becker2011,Brunner2017,Gripenberg1990,Pruss1993}.

\begin{lemma}[Weakly singular resolvent]\label{lem:resolvent}
Let $G$ be measurable on the open Volterra triangle, suppose that $V_G$ maps $C[0,T]$ into itself, and assume that, for some $C_G>0$ and $\eta>0$,
\begin{equation*}
|G(t,s)|\le C_G(t-s)^{\eta-1}
\qquad\text{for a.e. }0\le s<t\le T.
\end{equation*}
Define $G_1=G$ and
\begin{equation*}
G_{n+1}(t,s)=\int_s^tG(t,\xi)G_n(\xi,s)\dd\xi.
\end{equation*}
Then
\begin{equation*}
|G_n(t,s)|\le\frac{(C_G\Gamma(\eta))^n}{\Gamma(n\eta)}(t-s)^{n\eta-1}.
\end{equation*}
Consequently, the kernel series converges absolutely for almost every $0\le s<t\le T$, and the induced operator series converges in operator norm on $C[0,T]$. In this precise sense,
\begin{equation}
R_G(t,s)=\sum_{n=1}^{\infty}(-1)^{n-1}G_n(t,s)
\end{equation}
defines the Volterra resolvent kernel and
\begin{equation}
(I+V_G)^{-1}=I-V_{R_G}
\end{equation}
on $C[0,T]$.
\end{lemma}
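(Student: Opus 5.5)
The plan is to establish the iterated-kernel bound by induction on $n$ and then deduce everything else from it. The base case $n=1$ is the hypothesis itself, since $C_G\Gamma(\eta)/\Gamma(\eta)=C_G$. For the inductive step I insert the bounds for $G$ and for $G_n$ into the definition of $G_{n+1}$. This gives
\begin{equation*}
|G_{n+1}(t,s)|\le C_G\frac{(C_G\Gamma(\eta))^n}{\Gamma(n\eta)}\int_s^t(t-\xi)^{\eta-1}(\xi-s)^{n\eta-1}\dd\xi .
\end{equation*}
The integral is a Beta integral, equal to $B(\eta,n\eta)(t-s)^{(n+1)\eta-1}=\frac{\Gamma(\eta)\Gamma(n\eta)}{\Gamma((n+1)\eta)}(t-s)^{(n+1)\eta-1}$, and this closes the induction. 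Along the way I need measurability of $G_{n+1}$ and the almost-everywhere finiteness of the defining integral. Both follow from Tonelli applied to the nonnegative majorants, because the majorant integral is finite for every $s<t$.

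Next comes convergence. By Stirling, $\Gamma(n\eta)$ grows superexponentially, so $\sum_n (C_G\Gamma(\eta))^n T^{n\eta}/\Gamma(n\eta)$ converges for every $T$; this series is essentially a Mittag-Leffler series. Writing $(t-s)^{n\eta-1}=(t-s)^{\eta-1}(t-s)^{(n-1)\eta}\le(t-s)^{\eta-1}T^{(n-1)\eta}$, I get $\sum_n|G_n(t,s)|\le C'(t-s)^{\eta-1}$ almost everywhere, so $R_G$ is well defined and satisfies the same weak-singularity bound.

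For the operators I use the Volterra-operator bound $\|V_{G_n}\|_{C\to C}\le\sup_t\int_0^t|G_n(t,s)|\dd s\le \frac{(C_G\Gamma(\eta))^nT^{n\eta}}{\Gamma(n\eta)\,n\eta}$, which is summable. I also need $V_{G_n}=V_G^n$ on $C[0,T]$. This follows by Fubini, again justified by the integrable majorant, so $V_{G_n}$ maps $C[0,T]$ into itself as an iterate of $V_G$. The Neumann-type series $\sum_n(-1)^{n-1}V_G^n$ therefore converges in operator norm, and dominated convergence identifies its limit with $V_{R_G}$. Finally, the telescoping identities $(I+V_G)(I-\sum_{n\le M}(-1)^{n-1}V_G^n)=I-(-1)^{M}V_G^{M+1}$ hold, together with the analogous identity with the factors reversed. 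Since $\|V_G^{M+1}\|\to0$, passing to the limit gives $(I+V_G)(I-V_{R_G})=(I-V_{R_G})(I+V_G)=I$.

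The main obstacle is the measure-theoretic bookkeeping in the last step rather than any estimate. Because $G$ is only measurable and need not be continuous up to the diagonal, I must make sure that the iterated kernels define the same operators as the powers $V_G^n$, and that the kernel series and the operator series have the same limit. I would handle this uniformly by Tonelli and Fubini against the explicit majorants $(t-s)^{n\eta-1}$. Once those majorants are in hand, every interchange is dominated by $\|f\|_\infty$ times an integrable function. Continuity of the outputs comes from the hypothesis that $V_G$ preserves $C[0,T]$.
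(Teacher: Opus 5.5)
Your proposal is correct and follows essentially the same route as the paper's proof: the same steps appear in the same order, namely Beta-integral induction, the operator bound $\|V_{G_n}\|_{C\to C}\le (C_G\Gamma(\eta)T^{\eta})^n/\Gamma(n\eta+1)$, Fubini with the power majorants to get $V_G^n=V_{G_n}$ and to identify the operator sum with $V_{R_G}$, and then summing the Neumann series. There is one harmless sign slip: the telescoped identity is $(I+V_G)(I-S_M)=I+(-1)^{M}V_G^{M+1}$, not $I-(-1)^{M}V_G^{M+1}$, where $S_M$ is your $M$th partial sum; this does not affect the limit.
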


\begin{proof}
Induction and the beta integral give the pointwise estimate. In addition,
\begin{equation*}
\|V_{G_n}\|_{C\to C}
\le\frac{(C_G\Gamma(\eta)T^\eta)^n}{\Gamma(n\eta+1)}.
\end{equation*}
For each fixed $t-s>0$, the pointwise majorant is summable, so the kernel series is absolutely convergent almost everywhere off the diagonal. The denominator in the operator estimate dominates every geometric sequence; hence the induced Neumann series converges in operator norm for each finite $T$. The same beta-function majorants justify Tonelli--Fubini in every finite iterate, giving $V_G^n=V_{G_n}$. Therefore the operator-norm sum of the Neumann series is represented by the almost-everywhere kernel sum above. Multiplication of the convergent operator series by $I+V_G$ then yields $(I+V_G)(I-V_{R_G})=(I-V_{R_G})(I+V_G)=I$, with the sign convention used in the definition of $R_G$.
\end{proof}

\begin{theorem}[Causal-inverse Volterra perturbation]\label{thm:causal-inverse}
Let $R,A,B_j,p_j$ and $K$ satisfy the preceding assumptions. For every $f\in C[0,T]$, the equation
\begin{equation*}
Lu=f
\end{equation*}
has a unique solution $u\in\D(A)$. If $R_L$ is the resolvent kernel generated by \eqref{eq:GL}, then
\begin{equation}
u=R(I-V_{R_L})f.
\label{eq:general-solution}
\end{equation}
Consequently, the classical Volterra resolvent mechanism requires no smallness condition on $p_j$, $K$, or the interval length.
\end{theorem}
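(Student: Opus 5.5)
The plan is to reduce the equation $Lu=f$ on $\D(A)$ to a second-kind Volterra equation for the density $v=Au$, using the causal factorization of \Cref{prop:factorization} and the resolvent of \Cref{lem:resolvent}. Since $R$ is injective, $A=R^{-1}:\D(A)=R(X)\to X$ is a bijection. Hence every $u\in\D(A)$ can be written uniquely as $u=Rv$ with $v\in X$. By \eqref{eq:factorization}, the equation $Lu=f$ is then equivalent to
\begin{equation*}
(I+W_L)v=f,\qquad v\in X=C[0,T],
\end{equation*}
with $W_L=V_{G_L}$ and $G_L$ given by \eqref{eq:GL}.

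The first step is to check that $G_L$ satisfies the hypotheses of \Cref{lem:resolvent}.

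\textbf{Measurability.} $G_L$ is measurable on the open triangle, since it is a finite sum of continuous coefficients times measurable convolution kernels, plus a Fubini-type integral of measurable functions.

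\textbf{Mapping property.} $V_{G_L}$ maps $C[0,T]$ into itself; this was recorded before \Cref{prop:factorization}. Each $V_{q_j}$ preserves $X$, multiplication by a continuous $p_j$ preserves $X$, and $V_KR$ is the composition of $R:X\to X$ with $V_K$, where $K\in C(\Delta_T)$, so $V_K$ maps $X$ into $X$ by dominated convergence.

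\textbf{Power bound.} The key estimate is $|G_L(t,s)|\le C_L(t-s)^{\eta-1}$. For the first sum, use $|p_j(t)q_j(t-s)|\le\|p_j\|_\infty C_j(t-s)^{\eta_j-1}$ together with $(t-s)^{\eta_j-1}\le T^{\eta_j-\eta}(t-s)^{\eta-1}$, which holds because $\eta\le\eta_j$. For the integral term, the bound on $r$ gives
\begin{equation*}
\Bigl|\int_s^tK(t,\xi)r(\xi-s)\dd\xi\Bigr|\le\|K\|_\infty C_0\frac{(t-s)^{\eta_0}}{\eta_0},
\end{equation*}
and this is at most $C(t-s)^{\eta-1}$ because $\eta\le1$ and $t-s\le T$.

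With these hypotheses in place, \Cref{lem:resolvent} shows that $I+W_L$ is invertible on $X$, with inverse $I-V_{R_L}$. Therefore $v=(I-V_{R_L})f$ is the unique solution of the reduced equation in $X$.

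Existence then follows by setting $u=Rv$. This $u$ lies in $\D(A)$, and $Au=v$, so $Lu=(I+W_L)v=f$. For uniqueness, suppose $u_1,u_2\in\D(A)$ both solve $Lu=f$. Then $v_i=Au_i$ both solve the reduced equation, so $v_1=v_2$, and applying $R$ gives $u_1=u_2$. This establishes \eqref{eq:general-solution}.

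For the final remark about smallness: the operator-norm bound on $V_{G_n}$ contains the factor $1/\Gamma(n\eta+1)$. This makes the Neumann series converge for any values of $C_L$ and $T$, so no smallness condition on $p_j$, $K$, or the interval length is needed.

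The only delicate step is confirming the uniform exponent bound on $G_L$, specifically the reduction of the various $\eta_j$ to a common $\eta\le1$. I expect this to be routine given how $\eta$ was defined.
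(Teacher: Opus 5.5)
Your proposal is correct and follows the paper's proof. You write $u=Rv$, use the factorization $L=(I+W_L)A$ to obtain the second-kind equation $(I+W_L)v=f$, invert it with the weakly singular resolvent lemma, and get uniqueness from injectivity of $R$; your verification of $|G_L(t,s)|\le C_L(t-s)^{\eta-1}$ just spells out the estimate the paper records in the paragraph before the resolvent lemma.
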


\begin{proof}
With $u=Rv$, \Cref{prop:factorization} transforms the equation into $(I+W_L)v=f$. \Cref{lem:resolvent} gives the unique solution $v=(I-V_{R_L})f$. Applying $R$ proves \eqref{eq:general-solution}. Conversely, every $u\in\D(A)$ is $Rv$ for a unique $v$ because $R$ is injective, so uniqueness follows from the second-kind Volterra equation.
\end{proof}

The same factorization admits a finite boundary block. Let $\mathcal B\subset X$ be finite-dimensional and assume
\begin{equation}
\mathcal B\cap R(X)=\{0\}.
\label{eq:boundary-transversality}
\end{equation}
For linear maps $\Lambda_j:\mathcal B\to X$, define operators on the direct sum
\begin{equation*}
\D(A_\partial)=\mathcal B\oplus R(X)
\end{equation*}
by
\begin{equation}
A_\partial(P+Rv)=v,
\qquad
B_{j,\partial}(P+Rv)=\Lambda_jP+V_{q_j}v.
\label{eq:boundary-extended-operators}
\end{equation}
Thus $A_\partial$ annihilates the boundary block, while each lower operator has a prescribed finite-dimensional action there. Put
\begin{equation}
F_\partial(P)(t)
=\sum_{j=1}^{N}p_j(t)(\Lambda_jP)(t)
+\int_0^tK(t,s)P(s)\dd s.
\label{eq:boundary-forcing}
\end{equation}

\begin{theorem}[Affine boundary-jet factorization]\label{thm:boundary-factorization}
Under the hypotheses of \Cref{thm:causal-inverse} and \eqref{eq:boundary-transversality}, fix $P\in\mathcal B$ and define
\begin{equation*}
L_\partial u
=A_\partial u+\sum_{j=1}^{N}p_j(t)B_{j,\partial}u+V_Ku.
\end{equation*}
For every $f\in X$, the equation $L_\partial u=f$ has a unique solution in the affine slice $P+R(X)$, namely
\begin{equation}
u=P+R(I-V_{R_L})\bigl(f-F_\partial(P)\bigr),
\label{eq:boundary-solution}
\end{equation}
where $R_L$ is generated by the same reduced kernel \eqref{eq:GL} as in the homogeneous problem. Equivalently, writing $u=P+Rv$ gives the block factorization
\begin{equation}
L_\partial(P+Rv)=F_\partial(P)+(I+W_L)v.
\label{eq:boundary-block-factorization}
\end{equation}
\end{theorem}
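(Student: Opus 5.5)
The proof reduces the affine problem to the homogeneous one of \Cref{thm:causal-inverse}: we shift by $P$, reuse the same reduced kernel, and solve one second-kind equation.

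\textbf{Well-definedness.} By \eqref{eq:boundary-transversality} and injectivity of $R$, every element of $\D(A_\partial)=\mathcal B\oplus R(X)$ has a unique representation $u=P+Rv$ with $P\in\mathcal B$ and $v\in X$. The operators in \eqref{eq:boundary-extended-operators} are therefore well defined and linear. The affine slice $P+R(X)$, for the fixed $P$, is parametrized bijectively by $v\in X$ through $v\mapsto P+Rv$.

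\textbf{Block factorization.} For $u=P+Rv$, expand $L_\partial u$ term by term:
\begin{itemize}
\item $A_\partial u=v$.
\item $p_jB_{j,\partial}u=p_j\Lambda_jP+p_jV_{q_j}v$.
\item $V_Ku=V_KP+V_KRv$, where $V_KP$ is the integral in \eqref{eq:boundary-forcing}.
\end{itemize}
Group the terms containing $P$ into $F_\partial(P)$. The remaining terms are $v+\sum_jp_jV_{q_j}v+V_KRv$. By the Fubini computation already carried out in the proof of \Cref{prop:factorization}, this equals $(I+V_{G_L})v=(I+W_L)v$, with the same kernel \eqref{eq:GL}. This gives \eqref{eq:boundary-block-factorization}.

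We also check that $F_\partial(P)\in X$:
\begin{itemize}
\item $\Lambda_jP\in X$ by assumption, and $p_j\in C[0,T]$, so each $p_j\Lambda_jP$ is continuous.
\item $V_KP$ is continuous because $K\in C(\Delta_T)$ and $P\in X$.
\end{itemize}

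\textbf{Solving.} The equation $L_\partial u=f$ with $u\in P+R(X)$ is therefore equivalent to the second-kind equation $(I+W_L)v=f-F_\partial(P)$ for $v\in X$. The kernel bound $|G_L(t,s)|\le C_L(t-s)^{\eta-1}$ was established before \Cref{lem:resolvent}. That lemma then gives a unique solution $v=(I-V_{R_L})(f-F_\partial(P))$. Mapping back through the bijection $v\mapsto P+Rv$ gives existence of \eqref{eq:boundary-solution} and uniqueness within the slice.

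\textbf{Main obstacle.} The only genuinely delicate point is the uniqueness of the decomposition $u=P+Rv$. Without \eqref{eq:boundary-transversality} and the injectivity of $R$ from \Cref{lem:injectivity}, the operator $A_\partial$ would not be well defined. Once that is secured, the rest is a verbatim repetition of the homogeneous argument. The resolvent kernel $R_L$ is unchanged because $P$ enters only through the forcing term.
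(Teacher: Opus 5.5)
Your proposal is correct and follows essentially the same route as the paper. You expand $L_\partial(P+Rv)$, absorb the $P$-terms into $F_\partial(P)$, identify the remainder with $(I+W_L)v$ via \Cref{prop:factorization}, and invert with \Cref{lem:resolvent}, with injectivity of $R$ and transversality giving uniqueness in the slice; your explicit check that $F_\partial(P)\in X$ is a small detail the paper leaves implicit.
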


\begin{proof}
The definitions and \Cref{prop:factorization} give \eqref{eq:boundary-block-factorization}. Hence $L_\partial u=f$ is equivalent to
\begin{equation*}
(I+W_L)v=f-F_\partial(P).
\end{equation*}
The resolvent lemma gives the unique density $v=(I-V_{R_L})(f-F_\partial(P))$, and injectivity of $R$ together with \eqref{eq:boundary-transversality} gives uniqueness in $P+R(X)$.
\end{proof}

When $\mathcal B=\mathcal P_{m-1}$ and $R(X)\subset\mathcal N_m$, the projectors of \eqref{eq:jet-projectors} restrict to
\begin{equation*}
\Pi_m(P+Rv)=P,
\qquad
Q_m(P+Rv)=Rv.
\end{equation*}
Consequently, the extended operators and the full equation operator have the algebraic block forms
\begin{align}
A_\partial&=AQ_m,
\qquad B_{j,\partial}=\Lambda_j\Pi_m+B_jQ_m,
\label{eq:boundary-operator-blocks}\\
L_\partial
&=\left(A+\sum_{j=1}^{N}p_jB_j+V_K\right)Q_m
+\left(\sum_{j=1}^{N}p_j\Lambda_j+V_K\right)\Pi_m.
\label{eq:augmented-operator-form}
\end{align}
Thus the homogeneous normal-form sector and the finite jet matrix sector enter a single block-operator formula; the prescribed data select an affine fiber of its boundary projection.

\begin{remark}[Scope of the perturbation theorem]
\Cref{thm:causal-inverse,thm:boundary-factorization} combine the classical weakly singular Volterra resolvent mechanism of \Cref{lem:resolvent} with the operator factorizations \eqref{eq:factorization} and \eqref{eq:boundary-block-factorization}. The homogeneous and boundary blocks share the single second-kind kernel \eqref{eq:GL}; prescribed finite-dimensional data alter only the explicit forcing \eqref{eq:boundary-forcing}.
The structural output of the present framework is the causal factorization and the explicit reduced kernel that retains the independently prescribed bivariate perturbation; classical Volterra theory then supplies the resolvent of that reduced kernel.
\end{remark}

\begin{remark}
\Cref{thm:causal-inverse} is a natural-range statement, while \Cref{thm:boundary-factorization} is unconditional on each prescribed affine slice. Classical fractional derivatives may still have smaller regularity domains. The exact relation between the affine natural domain and the classical regularized-Prabhakar domain is established in \Cref{prop:classical-range}.
\end{remark}

\section{Fractional and boundary-jet realizations of the causal-inverse calculus}

\subsection{Finite fractional leading order}

Let $\alpha>0$. Take $R=J^\alpha$ and $A=R^{-1}$ on the natural range $\D(A)=J^\alpha C[0,T]$. To distinguish operators defined by range factorization from classical fractional derivatives, write
\begin{equation*}
\mathfrak D_{\alpha,\mathrm{nr}}^\alpha(J^\alpha v):=v,
\qquad
\mathfrak D_{\alpha,\mathrm{nr}}^\beta(J^\alpha v):=J^{\alpha-\beta}v,
\qquad \beta<\alpha.
\end{equation*}
Thus $A=\mathfrak D_{\alpha,\mathrm{nr}}^\alpha$. Injectivity of $J^\alpha$ makes these definitions unambiguous. We reserve the undecorated symbol $D^\beta$ for the flat-core operator, or for the classical Riemann--Liouville derivative on its classical domain. On every point of intersection with that domain, $\mathfrak D_{\alpha,\mathrm{nr}}^\beta$ agrees with $D^\beta$, and
\begin{equation}
\mathfrak D_{\alpha,\mathrm{nr}}^\beta J^\alpha
=J^{\alpha-\beta}=V_{h_{\alpha-\beta}}.
\label{eq:lower-power}
\end{equation}
By \eqref{eq:core-range-compatibility},
\begin{equation*}
\M=J^\alpha\M\subset J^\alpha C[0,T],
\qquad
\mathfrak D_{\alpha,\mathrm{nr}}^\beta|_{\M}=D^\beta
\quad(\beta\le\alpha).
\end{equation*}
Thus the natural-range operator is a compatible extension of the flat-core
operator that supplies the leading term of the finite normal form.  It is not
being defined on all of $C^\infty[0,T]$, but precisely on the larger natural
range $J^\alpha C[0,T]$.  Likewise, \eqref{eq:lower-power} is the
natural-range extension of the algebraic identity
$D^\beta=J^{\alpha-\beta}D^\alpha$ on $\M$.

\begin{corollary}[Fractional Volterra perturbation]\label{cor:fractional}
Let $\alpha>0$. For $p_j\in C[0,T]$, $\beta_j<\alpha$, and $K\in C(\Delta_T)$, the equation
\begin{equation}
\mathfrak D_{\alpha,\mathrm{nr}}^\alpha u
+\sum_{j=1}^{N}p_j(t)\mathfrak D_{\alpha,\mathrm{nr}}^{\beta_j}u
+\int_0^tK(t,s)u(s)\dd s=f(t)
\label{eq:fractional-equation}
\end{equation}
has a unique solution in $J^\alpha C[0,T]$. Its reduced kernel is
\begin{equation}
G_\alpha(t,s)
=\sum_{j=1}^{N}\frac{p_j(t)}{\Gamma(\alpha-\beta_j)}(t-s)^{\alpha-\beta_j-1}
+\frac1{\Gamma(\alpha)}\int_s^tK(t,\xi)(\xi-s)^{\alpha-1}\dd\xi,
\end{equation}
and
\begin{equation}
u=J^\alpha(I-V_{R_\alpha})f.
\end{equation}
\end{corollary}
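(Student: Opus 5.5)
This corollary is a direct specialization of \Cref{thm:causal-inverse}. We take $X=C[0,T]$, $R=J^\alpha=V_{h_\alpha}$ and $A=R^{-1}=\mathfrak D_{\alpha,\mathrm{nr}}^\alpha$ on $\D(A)=J^\alpha C[0,T]$, and we set $B_j=\mathfrak D_{\alpha,\mathrm{nr}}^{\beta_j}$. With these choices the equation \eqref{eq:fractional-equation} is exactly $Lu=f$ for the operator \eqref{eq:L}. The plan is to check the hypotheses of that theorem in order, then read off the reduced kernel \eqref{eq:GL} and the solution formula \eqref{eq:general-solution}.

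\emph{First}, we show that $R$ is an admissible causal inverse. The kernel $r=h_\alpha$ lies in $L^1(0,T)$ and satisfies $|r(t)|=t^{\alpha-1}/\Gamma(\alpha)$, so the power bound holds with $\eta_0=\alpha$ and $C_0=1/\Gamma(\alpha)$. Since $h_\alpha>0$ on $(0,T)$, it does not vanish almost everywhere on any interval $(0,\varepsilon)$. \Cref{lem:injectivity} therefore makes $J^\alpha$ injective on $C[0,T]$, so $A$ and the operators $\mathfrak D_{\alpha,\mathrm{nr}}^{\beta}$ are well defined.

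\emph{Second}, we verify \eqref{eq:BjR}. By \eqref{eq:lower-power}, $B_jR=J^{\alpha-\beta_j}=V_{q_j}$ with $q_j=h_{\alpha-\beta_j}$. Because $\beta_j<\alpha$, the kernel $q_j$ is integrable and $|q_j(t)|\le t^{\eta_j-1}/\Gamma(\eta_j)$ with $\eta_j=\alpha-\beta_j>0$. The remaining data $p_j\in C[0,T]$ and $K\in C(\Delta_T)$ are already as the theorem requires.

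\emph{Third}, we apply the theorem and compute. \Cref{thm:causal-inverse} gives existence and uniqueness of $u\in J^\alpha C[0,T]$. Substituting $q_j=h_{\alpha-\beta_j}$ and $r=h_\alpha$ into \eqref{eq:GL} gives precisely the displayed $G_\alpha$. Formula \eqref{eq:general-solution} becomes $u=J^\alpha(I-V_{R_\alpha})f$, where $R_\alpha$ is the resolvent kernel of $G_\alpha$ from \Cref{lem:resolvent}.

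No genuine obstacle is expected here. The only point needing care is that $\beta_j$ may be negative or very negative. In that case $\alpha-\beta_j>\alpha$, so $q_j$ is even smoother, and the exponent in the bound on $G_L$ is still governed by $\eta=\min(1,\min_j\eta_j)>0$. The integral term is $O((t-s)^\alpha)$, so the weakly singular bound required by \Cref{lem:resolvent} holds automatically.
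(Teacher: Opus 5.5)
Your proposal is correct and matches the paper's route. The paper gives no separate proof and treats the corollary as the specialization of \Cref{thm:causal-inverse} with $R=J^\alpha=V_{h_\alpha}$, $B_j=\mathfrak D_{\alpha,\mathrm{nr}}^{\beta_j}$ and $q_j=h_{\alpha-\beta_j}$ via \eqref{eq:lower-power}, so that \eqref{eq:GL} and \eqref{eq:general-solution} yield $G_\alpha$ and $u=J^\alpha(I-V_{R_\alpha})f$. Your explicit check of the injectivity and power-bound hypotheses, including the case of negative $\beta_j$, supplies verifications the paper leaves implicit.
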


The finite jet layer gives the corresponding nonzero-data statement without changing the reduced kernel. Let $m-1<\alpha\le m$. Since
\begin{equation*}
J^\alpha=J^{m-1}J^{\alpha-m+1},
\end{equation*}
every element of $J^\alpha X$ belongs to $C^{m-1}[0,T]$ and has zero jets through order $m-1$. Hence
\begin{equation*}
\mathcal D_{\alpha,m}
=\mathcal P_{m-1}\oplus J^\alpha X
\end{equation*}
is a direct sum. On this domain define the boundary-extended operators
\begin{align}
{}^C\!\mathfrak D_m^\alpha(P+J^\alpha v)&=v,\label{eq:extended-caputo-leading}\\
{}^C\!\mathfrak D_m^\beta(P+J^\alpha v)
&={}^CD^\beta P+J^{\alpha-\beta}v,
\qquad 0\le\beta<\alpha.
\label{eq:extended-caputo-lower}
\end{align}
The leading operator annihilates $\mathcal P_{m-1}$, and the lower operators retain its explicit Caputo images.

\begin{corollary}[Fractional perturbation with prescribed jets]\label{cor:fractional-jets}
Let $m-1<\alpha\le m$, $0\le\beta_j<\alpha$, and prescribe $d_0,\ldots,d_{m-1}\in\C$. Put $P_{\mathbf d}=\sum_{k=0}^{m-1}d_ke_k$ and
\begin{equation}
F_{\mathbf d}(t)
=f(t)-\sum_{j=1}^{N}p_j(t){}^CD^{\beta_j}P_{\mathbf d}(t)
-\int_0^tK(t,s)P_{\mathbf d}(s)\dd s.
\label{eq:fractional-boundary-forcing}
\end{equation}
Then
\begin{equation}
{}^C\!\mathfrak D_m^\alpha u
+\sum_{j=1}^{N}p_j(t){}^C\!\mathfrak D_m^{\beta_j}u
+\int_0^tK(t,s)u(s)\dd s=f(t),
\qquad \varepsilon_ku=d_k,\quad 0\le k<m,
\label{eq:fractional-boundary-equation}
\end{equation}
has a unique solution in the affine slice $P_{\mathbf d}+J^\alpha X$, given by
\begin{equation}
u=P_{\mathbf d}+J^\alpha(I-V_{R_\alpha})F_{\mathbf d},
\label{eq:fractional-boundary-solution}
\end{equation}
where $R_\alpha$ is generated by the same kernel $G_\alpha$ as in \Cref{cor:fractional}.
\end{corollary}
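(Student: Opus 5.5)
The plan is to realize \Cref{cor:fractional-jets} as the special case of \Cref{thm:boundary-factorization} with $R=J^\alpha$, $\mathcal B=\mathcal P_{m-1}$, $B_j={}^C\!\mathfrak D_m^{\beta_j}$ restricted to $J^\alpha X$, and $\Lambda_jP={}^CD^{\beta_j}P$. First I check the hypotheses. $R=J^\alpha=V_{h_\alpha}$ is an admissible causal inverse: $h_\alpha(t)=t^{\alpha-1}/\Gamma(\alpha)$ obeys the power bound with $\eta_0=\alpha$ and is nonvanishing near $0$, so \Cref{lem:injectivity} gives injectivity. By \eqref{eq:lower-power}, $B_jR=J^{\alpha-\beta_j}=V_{h_{\alpha-\beta_j}}$, which satisfies \eqref{eq:BjR} with $\eta_j=\alpha-\beta_j>0$. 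Transversality \eqref{eq:boundary-transversality} is the direct-sum statement already noted before the corollary. Every element of $J^\alpha X$ lies in $C^{m-1}$ with zero jets, via $J^\alpha=J^{m-1}J^{\alpha-m+1}$ and $\alpha-m+1>0$. A nonzero polynomial of degree $<m$ has a nonzero jet of order $<m$. Hence $\mathcal P_{m-1}\cap J^\alpha X=\{0\}$, and $J^\alpha X\subset\mathcal N_m$.

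Next I compare the operators. The definitions \eqref{eq:extended-caputo-leading}--\eqref{eq:extended-caputo-lower} coincide with \eqref{eq:boundary-extended-operators} for this choice of $\Lambda_j$. Here ${}^CD^{\beta_j}P$ is continuous on $[0,T]$: it is a finite combination of $t^{k-\beta_j}$ over those $k$ with $k>\beta_j$ (the others are annihilated), plus $P$ itself if $\beta_j=0$. So $\Lambda_j$ maps $\mathcal B$ into $X$, as the theorem requires. Substituting into \eqref{eq:GL} with $q_j=h_{\alpha-\beta_j}$ and $r=h_\alpha$ gives exactly $G_\alpha$ of \Cref{cor:fractional}. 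Likewise \eqref{eq:boundary-forcing} gives $f-F_\partial(P_{\mathbf d})=F_{\mathbf d}$ as in \eqref{eq:fractional-boundary-forcing}. Formula \eqref{eq:boundary-solution} then produces \eqref{eq:fractional-boundary-solution}, and it is unique within $P_{\mathbf d}+J^\alpha X$.

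The remaining step, where some care is needed, is matching the jet conditions with the affine slice. If $u=P+J^\alpha v$, then $\varepsilon_ku=\varepsilon_kP=d_k'$ (the coefficients of $P$) for $k<m$, because $J^\alpha v\in\mathcal N_m$. Thus $\varepsilon_ku=d_k$ for all $k<m$ holds exactly when $P=P_{\mathbf d}$, that is, when $\Pi_mu=P_{\mathbf d}$ by \Cref{prop:boundary-jets}. Consequently, solutions of \eqref{eq:fractional-boundary-equation} in $\mathcal D_{\alpha,m}$ are precisely the solutions of $L_\partial u=f$ lying in the slice $P_{\mathbf d}+J^\alpha X$. This gives both existence and uniqueness. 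The main point to verify carefully is the zero-jet property of $J^\alpha X$ when $\alpha=m$ or $\alpha$ is close to $m-1$. The factorization $J^{m-1}J^{\alpha-m+1}$ covers both cases, since $J^{\alpha-m+1}v$ is continuous and $J^{m-1}$ of a continuous function has vanishing derivatives at $0$ through order $m-2$, with the $(m-1)$st derivative equal to $J^{\alpha-m+1}v$, which vanishes at $0$.
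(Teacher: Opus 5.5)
Your proposal is correct and follows the paper's proof. The paper's argument is the one-line application of \Cref{thm:boundary-factorization} with $R=J^\alpha$, $\mathcal B=\mathcal P_{m-1}$ and $\Lambda_jP={}^CD^{\beta_j}P$, using the zero-jet property of $J^\alpha X$ to identify the boundary coordinates with $\varepsilon_0,\ldots,\varepsilon_{m-1}$ and \eqref{eq:lower-power} to recover $G_\alpha$; you spell out the same hypothesis checks in more detail. One harmless slip: when $\beta_j$ is a positive integer, the term $k=\beta_j$ is not annihilated, since ${}^CD^{\beta_j}e_{\beta_j}=1$ by \eqref{eq:caputo-on-jets}, but this does not affect the continuity of $\Lambda_jP$.
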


\begin{proof}
Apply \Cref{thm:boundary-factorization} with $R=J^\alpha$, $\mathcal B=\mathcal P_{m-1}$ and $\Lambda_jP={}^CD^{\beta_j}P$. The zero-jet property of $J^\alpha X$ identifies the boundary coordinates with $\varepsilon_0,\ldots,\varepsilon_{m-1}$, and \eqref{eq:lower-power} leaves the reduced kernel unchanged.
\end{proof}

\subsection{Admissible Sonine-kernel Volterra perturbations}

Let $\kappa,k\in L^1(0,T)$ be a Sonine pair, meaning that
\begin{equation}
(\kappa*k)(t)=1,
\qquad 0<t\le T.
\label{eq:sonine}
\end{equation}
Assume in addition that
\begin{equation}
|\kappa(t)|\le C_\kappa t^{\eta_\kappa-1},
\qquad \eta_\kappa>0,
\label{eq:kappa-bound}
\end{equation}
and that $\kappa$ does not vanish almost everywhere on any interval $(0,\varepsilon)$. Then $I_\kappa:=V_\kappa$ is injective by \Cref{lem:injectivity} and is an admissible causal inverse. On its natural range define
\begin{equation*}
D_ku:=\frac{\dd}{\dd t}V_ku,
\qquad \D(D_k)=I_\kappa(C[0,T]).
\end{equation*}
Indeed, if $u=I_\kappa v$, associativity and \eqref{eq:sonine} give
\begin{equation*}
V_ku=V_{k*\kappa}v=Jv,
\qquad D_ku=v.
\end{equation*}
Thus $D_k=I_\kappa^{-1}$ on the natural range. On every classical domain where differentiation of the convolution is valid, this is the Riemann--Liouville-type general fractional derivative generated by $k$; if $u(0)=0$, it also agrees with the corresponding Caputo-type derivative $V_ku'$ \cite{Luchko2022}.

Lower general fractional derivatives can be retained without imposing a common power law. Let $k_j\in L^1(0,T)$ be such that
\begin{equation}
g_j:=k_j*\kappa\in\AC[0,T],
\qquad g_j(0)=0,
\qquad q_j:=g_j'\ \text{a.e.},
\qquad |q_j(t)|\le C_jt^{\eta_j-1}\ \text{a.e.}
\label{eq:lower-sonine}
\end{equation}
for some $\eta_j>0$. Thus $q_j\in L^1(0,T)$. If $u=I_\kappa v$ with $v\in C[0,T]$, associativity gives $V_{k_j}u=V_{g_j}v$. The standard differentiation formula for an absolutely continuous convolution kernel, together with $g_j(0)=0$, yields
\begin{equation}
(V_{k_j}I_\kappa v)'=(V_{g_j}v)'=V_{q_j}v.
\label{eq:lower-sonine-composition}
\end{equation}
Although $q_j=g_j'$ is initially defined only almost everywhere, $V_{q_j}v$ has a unique continuous representative. Equation \eqref{eq:lower-sonine-composition} therefore shows that $V_{k_j}I_\kappa v\in C^1[0,T]$ and defines $D_{k_j}u=(V_{k_j}u)'$ as that continuous function. Hence all terms in the equation below are equalities in $C[0,T]$, not merely almost-everywhere identities.

\begin{theorem}[Sonine--Volterra perturbation]\label{thm:sonine-volterra}
Suppose that \eqref{eq:sonine}--\eqref{eq:lower-sonine} hold, $p_0,p_j\in C[0,T]$, and $K\in C(\Delta_T)$. Then, for every $f\in C[0,T]$, the equation
\begin{equation}
D_ku(t)+p_0(t)u(t)+\sum_{j=1}^{N}p_j(t)D_{k_j}u(t)
+\int_0^tK(t,s)u(s)\dd s=f(t)
\label{eq:sonine-volterra}
\end{equation}
has a unique solution in $I_\kappa(C[0,T])$. It is
\begin{equation}
u=I_\kappa(I-V_{R_S})f,
\end{equation}
where $R_S$ is generated by
\begin{equation}
G_S(t,s)=p_0(t)\kappa(t-s)
+\sum_{j=1}^{N}p_j(t)q_j(t-s)
+\int_s^tK(t,\xi)\kappa(\xi-s)\dd\xi.
\label{eq:GS}
\end{equation}
Beyond the continuity assumption $K\in C(\Delta_T)$, no convolution structure on $K$ is required, and no norm-smallness condition on the perturbation coefficients or on $K$ is imposed.
\end{theorem}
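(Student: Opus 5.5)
The plan is to recognize \eqref{eq:sonine-volterra} as an instance of the operator $L$ in \eqref{eq:L} and then invoke \Cref{thm:causal-inverse}. I would take $R=I_\kappa=V_\kappa$ and $r=\kappa$. Then $A=D_k$ on $\D(A)=I_\kappa(C[0,T])$, which the computation preceding the theorem already identified with $I_\kappa^{-1}$. Admissibility of $R$ is immediate: $\kappa\in L^1(0,T)$, the bound \eqref{eq:kappa-bound} gives the power-type estimate with $\eta_0=\eta_\kappa$, and injectivity on $C[0,T]$ follows from \Cref{lem:injectivity}, because $\kappa$ is not a.e.\ zero near the origin.

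Next I would set up the lower operators so that \eqref{eq:BjR} holds, writing the equation with $N+1$ lower terms.
\begin{itemize}[leftmargin=*]
\item The zeroth-order term is $B_0=I$ with coefficient $p_0$. Then $B_0R=V_\kappa$, so $q_0=\kappa$ with exponent $\eta_\kappa$.
\item For $j\ge1$ put $B_j=D_{k_j}$ on $\D(A)$. By \eqref{eq:lower-sonine-composition}, $B_jR=V_{q_j}$ as operators on $C[0,T]$, with $q_j=g_j'\in L^1(0,T)$ and $|q_j(t)|\le C_jt^{\eta_j-1}$ by \eqref{eq:lower-sonine}.
\end{itemize}
The paragraph before the theorem ensures that $D_{k_j}u$ is a genuine continuous function for $u\in\D(A)$. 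Hence each $B_j$ is a well-defined linear map $\D(A)\to C[0,T]$, and every identity holds in $C[0,T]$ rather than only almost everywhere. The coefficients $p_0,p_j$ are continuous and $K\in C(\Delta_T)$, so all hypotheses of \Cref{thm:causal-inverse} are met.

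Substituting these data into \eqref{eq:GL} gives exactly \eqref{eq:GS}:
\begin{itemize}[leftmargin=*]
\item the $B_0$ term contributes $p_0(t)\kappa(t-s)$;
\item the lower Sonine terms contribute $p_j(t)q_j(t-s)$;
\item the Volterra term contributes $\int_s^tK(t,\xi)\kappa(\xi-s)\dd\xi$.
\end{itemize}
The power bound on $G_S$ required by \Cref{lem:resolvent} holds with $\eta=\min(1,\eta_\kappa,\eta_j)$. The first two groups of terms obey it directly. The last term is bounded by $\|K\|_\infty C_\kappa(t-s)^{\eta_\kappa}/\eta_\kappa$, which is dominated by $T^{\eta_\kappa+1-\eta}(t-s)^{\eta-1}$.

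\Cref{thm:causal-inverse} then yields existence and uniqueness in $I_\kappa(C[0,T])$ together with the formula $u=I_\kappa(I-V_{R_S})f$. The absence of any smallness condition and of any convolution structure on $K$ is inherited from that theorem.

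I expect the only delicate step to be confirming that $B_j=D_{k_j}$ really satisfies $B_jR=V_{q_j}$ as an operator identity into $C[0,T]$. That is, for $v\in C[0,T]$, the function $V_{g_j}v$ must be $C^1$ with derivative $V_{q_j}v$. I would verify this from $g_j(0)=0$ and absolute continuity: write $g_j=J q_j$, so that $V_{g_j}v=J(V_{q_j}v)$. Since $V_{q_j}v$ is continuous by Young's inequality and $q_j\in L^1$, the fundamental theorem of calculus gives the claim.
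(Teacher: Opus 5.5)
Your proposal is correct and follows the paper's proof exactly. Like the paper, you take $R=I_\kappa$ and $A=D_k$, treat $p_0u$ as a lower term with $B_0=I$ and $q_0=\kappa$, use \eqref{eq:lower-sonine-composition} for the $D_{k_j}$, and apply \Cref{thm:causal-inverse}. Your additional checks fill in details the paper asserts without writing out: the power bound on $G_S$ with $\eta=\min(1,\eta_\kappa,\eta_j)$, and the identity $V_{g_j}v=J(V_{q_j}v)$, which gives $B_jR=V_{q_j}$ in $C[0,T]$.
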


\begin{proof}
Take $R=I_\kappa$, $A=D_k$, include the identity among the lower operators with kernel $q_0=\kappa$, and use \eqref{eq:lower-sonine-composition} for the remaining terms. Conditions \eqref{eq:kappa-bound} and \eqref{eq:lower-sonine} verify the hypotheses of \Cref{thm:causal-inverse}; its factorization gives \eqref{eq:GS} and its resolvent formula gives the stated solution.
\end{proof}

In relation to the operational calculus of Luchko \cite{Luchko2021}, the later construction and inversion theory for Sonine kernels \cite{Luchko2022,Ortigueira2024}, and the continuous-variable-coefficient Sonine equations of \cite{Guzoglu2026}, \Cref{thm:sonine-volterra} adds an independently prescribed continuous bivariate kernel to the variable-coefficient setting. This kernel survives the causal reduction explicitly as the last term of \eqref{eq:GS}, simultaneously with compatible lower Sonine derivatives, and produces a genuinely nonconvolution reduced kernel.

\begin{example}[A genuinely nonconvolution Sonine perturbation]
Let $0<\alpha<1$, $\kappa=h_\alpha$, and $k=h_{1-\alpha}$. Then $k*\kappa=1$, $I_\kappa=J^\alpha$, and $D_k=\mathfrak D_{\alpha,\mathrm{nr}}^\alpha$ on $J^\alpha C[0,T]$ (and hence agrees there with the classical $D^\alpha$ wherever the latter is defined). Take $N=0$, $p_0=0$, and $K(t,s)=t+s$. Equation \eqref{eq:sonine-volterra} becomes
\begin{equation*}
\mathfrak D_{\alpha,\mathrm{nr}}^\alpha u(t)
+\int_0^t(t+s)u(s)\dd s=f(t),
\qquad u\in J^\alpha C[0,T].
\end{equation*}
The reduced kernel is explicitly
\begin{align*}
G_{\mathrm{nc}}(t,s)
&=\frac1{\Gamma(\alpha)}\int_s^t(t+\xi)(\xi-s)^{\alpha-1}\dd\xi\\
&=\frac{(t+s)(t-s)^\alpha}{\Gamma(\alpha+1)}
+\frac{\alpha(t-s)^{\alpha+1}}{\Gamma(\alpha+2)}.
\end{align*}
The factor $t+s$ shows directly that $G_{\mathrm{nc}}$ is not a convolution kernel. If $R_{\mathrm{nc}}$ is its Volterra resolvent, the unique natural-range solution is
\begin{equation*}
u=J^\alpha(I-V_{R_{\mathrm{nc}}})f.
\end{equation*}
Thus the independently bivariate term is not merely appended formally: it remains visible after causal reduction and is handled without a convolution assumption.
\end{example}

\subsection{Distributed-order realization}

Distributed-order derivatives and their inverse kernels have an established theory; see, in particular, Kochubei \cite{Kochubei2008}. The next proposition verifies, for every nonzero finite positive Borel measure supported in a compact subinterval of $(0,1)$, the precise positivity, injectivity, and pointwise weak-singularity hypotheses required by \Cref{thm:sonine-volterra}. It thereby places continuous-order equations with an independently prescribed continuous bivariate Volterra perturbation inside the causal-factorization framework.

Let $\nu$ be a nonzero finite positive Borel measure supported in a compact interval $[a,b]\subset(0,1)$ and put
\begin{equation}
k_\nu(t)=\int_{(0,1)}h_{1-\alpha}(t)\dd\nu(\alpha).
\label{eq:distributed-k}
\end{equation}

\begin{proposition}[Distributed-order Sonine associate]\label{prop:distributed}
The kernel $k_\nu$ has a completely monotone Sonine associate $\kappa_\nu\in L^1_{\mathrm{loc}}(0,\infty)$ satisfying
\begin{equation}
\widehat\kappa_\nu(z)=\left(\int_{[a,b]}z^\alpha\dd\nu(\alpha)\right)^{-1},
\qquad z>0.
\label{eq:kappahat}
\end{equation}
Moreover, $\kappa_\nu(t)>0$ for $t>0$ and, for every $T>0$,
\begin{equation}
0<\kappa_\nu(t)\le C_Tt^{a-1},
\qquad 0<t\le T.
\label{eq:kappa-bound-distributed}
\end{equation}
Thus $\kappa_\nu$ satisfies all admissibility and injectivity hypotheses of \Cref{thm:sonine-volterra}.
\end{proposition}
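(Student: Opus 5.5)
The plan is to work entirely through Laplace transforms and Bernstein-function theory. First, compute $\widehat{k_\nu}$. Tonelli applies because everything is nonnegative, and $\widehat{h_{1-\alpha}}(z)=z^{\alpha-1}$, so
\begin{equation*}
\widehat{k_\nu}(z)=\frac{\Phi(z)}{z},
\qquad
\Phi(z):=\int_{[a,b]}z^\alpha\dd\nu(\alpha),
\qquad z>0.
\end{equation*}
Each $z\mapsto z^\alpha$ with $0<\alpha<1$ is a complete Bernstein function. The cone of complete Bernstein functions is closed under positive integral mixtures, so $\Phi$ is complete Bernstein. It is also nonzero, because $\nu\neq0$. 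The key structural fact I will use is: if $\Phi$ is complete Bernstein, then $z/\Phi(z)$ is complete Bernstein too. Equivalently, $1/\Phi$ is a Stieltjes function. Since $\Phi(0+)=0$ and $\Phi(z)/z\to0$ as $z\to\infty$ (because $b<1$), the Stieltjes representation of $1/\Phi$ has no constant term and no $1/z$ atom. Hence $1/\Phi$ is the Laplace transform of a completely monotone function $\kappa_\nu\in L^1_{\mathrm{loc}}(0,\infty)$, which gives \eqref{eq:kappahat}. Then $\widehat{\kappa_\nu*k_\nu}=1/z=\widehat{\one}$, and injectivity of the Laplace transform on $L^1_{\mathrm{loc}}$ functions of exponential order gives the Sonine relation \eqref{eq:sonine} on every $(0,T]$.

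Next comes strict positivity. A completely monotone function is a Laplace--Stieltjes transform $\kappa_\nu(t)=\int e^{-ts}\dd\sigma(s)$ of a positive measure $\sigma$, and $\sigma\neq0$ because $\widehat\kappa_\nu\not\equiv0$. It follows that $\kappa_\nu(t)>0$ for every $t>0$. In particular $\kappa_\nu$ does not vanish almost everywhere on any $(0,\varepsilon)$, so \Cref{lem:injectivity} applies.

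The main obstacle is the pointwise bound \eqref{eq:kappa-bound-distributed}. Monotonicity gives, for $t>0$,
\begin{equation*}
\kappa_\nu(t)\,t\le\int_0^t\kappa_\nu(s)\dd s\le e\int_0^\infty e^{-s/t}\kappa_\nu(s)\dd s=e\,\widehat\kappa_\nu(1/t)=\frac{e}{\Phi(1/t)}.
\end{equation*}
For $0<t\le1$ we have $z=1/t\ge1$, so $z^\alpha\ge z^a$ on $[a,b]$ and therefore $\Phi(z)\ge\nu([a,b])z^a$. This yields $\kappa_\nu(t)\le e\,\nu([a,b])^{-1}t^{a-1}$. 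For $1\le t\le T$, monotonicity gives $\kappa_\nu(t)\le\kappa_\nu(1)$, and $t^{a-1}\ge T^{a-1}$ there, so enlarging the constant produces $C_T$. Finally, $\kappa_\nu$ is integrable on $(0,T)$ because $a>0$. Together with $k_\nu\in L^1(0,T)$, which follows from $b<1$, this verifies \eqref{eq:sonine} and \eqref{eq:kappa-bound}, and hence all admissibility and injectivity hypotheses of \Cref{thm:sonine-volterra}.

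The only step I take on faith from the literature is the closure fact that $z/\Phi$ is complete Bernstein whenever $\Phi$ is; this is standard, as in Schilling--Song--Vondra\v{c}ek or Kochubei \cite{Kochubei2008}.
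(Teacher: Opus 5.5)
Your overall route is the paper's: $\Phi$ is complete Bernstein, $1/\Phi$ is Stieltjes, Laplace uniqueness gives the Sonine relation, positivity comes from $\sigma\neq0$, and the bound comes from $t\kappa_\nu(t)\le e\,\widehat\kappa_\nu(1/t)$. However, one step is justified by an implication that is false.

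\textbf{The faulty step.} You say that $\Phi(0+)=0$ and $\Phi(z)/z\to0$ as $z\to\infty$ rule out a constant term and a $1/z$ term in the Stieltjes representation of $1/\Phi$. Those two limits are the constant term and the $1/z$ coefficient of the Stieltjes function $\Phi(z)/z=\widehat k_\nu(z)$. So you have checked the representation of $\widehat k_\nu$, not that of $\widehat\kappa_\nu=1/\Phi$.

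The implication fails in general. For $\Phi(z)=z/(1+z)$, which is complete Bernstein with $\Phi(0+)=0$ and $\Phi(z)/z\to0$, one gets $1/\Phi(z)=1+1/z$, which has both terms.

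\textbf{Why it matters.} If $c=\lim_{z\to\infty}1/\Phi(z)$ were positive, then $1/\Phi$ would be the Laplace transform of $c\,\delta_0$ plus a function. The Laplace transform of an $L^1_{\mathrm{loc}}$ function tends to $0$ at infinity, so no $\kappa_\nu\in L^1_{\mathrm{loc}}$ satisfying \eqref{eq:kappahat} would exist. The existence of $\kappa_\nu$ as a function therefore rests on this step.

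\textbf{The repair.} Use the correct limits: the constant term of $1/\Phi$ is $\lim_{z\to\infty}1/\Phi(z)$, and its $1/z$ coefficient is $\lim_{z\downarrow0}z/\Phi(z)$. Put $M=\nu([a,b])$.
\begin{itemize}
\item For $z\ge1$ you have $\Phi(z)\ge Mz^a$, the bound you already use later in the pointwise estimate. It gives $1/\Phi(z)\le M^{-1}z^{-a}\to0$; this is where $a>0$ is needed.
\item For $0<z\le1$ you have $\Phi(z)\ge Mz^b$. It gives $z/\Phi(z)\le M^{-1}z^{1-b}\to0$; this is where $b<1$ is needed.
\end{itemize}
So your attribution of ``because $b<1$'' belongs to the small-$z$ limit, not the large-$z$ one. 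A $1/z$ term would in fact be harmless, since it only adds a constant to $\kappa_\nu$; the large-$z$ limit is the essential one. This two-sided check is exactly what the paper does.

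\textbf{Minor differences, both fine.}
\begin{itemize}
\item You assert directly that the complete Bernstein cone is closed under integral mixtures. The paper instead approximates $\Phi$ by Riemann-type sums and invokes closure under limits. Your claim can be made self-contained via $z^\alpha=\frac{\sin\pi\alpha}{\pi}\int_0^\infty\frac{z}{z+s}s^{\alpha-1}\dd s$ and Tonelli, which exhibits the representing density of $\Phi$ explicitly.
\item On $[1,T]$ you use monotonicity where the paper uses continuity.
\end{itemize}
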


\begin{proof}
Set $\Phi(z)=\int_{[a,b]}z^\alpha\dd\nu(\alpha)$. Each $z^\alpha$, $0<\alpha<1$, is a complete Bernstein function, and positive finite sums preserve this class. To approximate the integral, partition $[a,b]$ into subintervals $I_{n,k}$ of mesh tending to zero, choose $\alpha_{n,k}\in I_{n,k}$ whenever $\nu(I_{n,k})>0$, and set
\begin{equation*}
\Phi_n(z)=\sum_k\nu(I_{n,k})z^{\alpha_{n,k}}.
\end{equation*}
Continuity of $(\alpha,z)\mapsto z^\alpha$ on $[a,b]\times K$ shows that $\Phi_n\to\Phi$ uniformly for every compact $K\subset(0,\infty)$. Thus the closure property in \cite[Corollary~7.6]{Schilling2012} shows that $\Phi$ is again a complete Bernstein function. The reciprocal characterization \cite[Theorem~7.3]{Schilling2012} therefore implies that $1/\Phi$ is a Stieltjes function. If $M=\nu([a,b])>0$, then for $z\ge1$ one has $\Phi(z)\ge Mz^a$, and for $0<z\le1$ one has $\Phi(z)\ge Mz^b$. Consequently,
\begin{equation*}
0\le\frac1{\Phi(z)}\le\frac1M z^{-a}\longrightarrow0
\quad(z\to\infty),
\qquad
0\le\frac z{\Phi(z)}\le\frac1M z^{1-b}\longrightarrow0
\quad(z\downarrow0).
\end{equation*}
Thus the Stieltjes representation in \cite[Theorem~2.2]{Schilling2012} has neither a constant term nor a $z^{-1}$ term. Hence there is a nonzero positive measure $\sigma$ on $[0,\infty)$ such that
\begin{equation}
\frac1{\Phi(z)}=\int_{[0,\infty)}\frac1{z+r}\dd\sigma(r)
=\int_0^\infty e^{-zt}\kappa_\nu(t)\dd t,
\qquad
\kappa_\nu(t):=\int_{[0,\infty)}e^{-rt}\dd\sigma(r).
\label{eq:stieltjes-kappa}
\end{equation}
Tonelli's theorem and the Stieltjes integrability condition give
\begin{equation*}
\int_0^T\kappa_\nu(t)\dd t
=\int_{[0,\infty)}\frac{1-e^{-rT}}{r}\dd\sigma(r)<\infty,
\end{equation*}
where the integrand is interpreted as $T$ at $r=0$. Thus $\kappa_\nu\in L^1_{\mathrm{loc}}(0,\infty)$; differentiating under the positive integral in \eqref{eq:stieltjes-kappa} shows that it is completely monotone, and $\sigma\ne0$ gives $\kappa_\nu(t)>0$ for every $t>0$. Also
\begin{equation*}
\widehat k_\nu(z)=\frac{\Phi(z)}{z},
\qquad \widehat k_\nu(z)\widehat\kappa_\nu(z)=\frac1z.
\end{equation*}
Uniqueness of the Laplace transform yields $k_\nu*\kappa_\nu=1$.

It remains to verify the pointwise bound. For $0<t\le1$, monotonicity gives
\begin{equation*}
\widehat\kappa_\nu(1/t)
\ge\int_0^te^{-s/t}\kappa_\nu(s)\dd s
\ge e^{-1}t\kappa_\nu(t).
\end{equation*}
Because $\Phi(1/t)\ge\nu([a,b])t^{-a}$, \eqref{eq:kappahat} implies $\kappa_\nu(t)\le e\nu([a,b])^{-1}t^{a-1}$. On a remaining compact interval away from zero the bound follows from continuity.
\end{proof}

\begin{remark}[Status of the distributed-order step]
\Cref{prop:distributed} uses the complete-Bernstein/Stieltjes mechanism to show that a genuinely continuous positive order measure satisfies, without an extra assumed kernel estimate, the precise admissibility hypotheses needed to retain the external bivariate perturbation in \Cref{thm:sonine-volterra}.
\end{remark}

For $u\in\AC[0,T]$ with $u(0)=0$, Fubini's theorem gives
\begin{equation}
D_{k_\nu}u=\int_{(0,1)}{}^{C}\!D^\alpha u\dd\nu(\alpha).
\label{eq:distributed-caputo}
\end{equation}
Indeed, for $\alpha\in[a,b]$ the kernels $h_{1-\alpha}$ are dominated on $(0,T]$ by an integrable multiple of $t^{-a}+t^{-b}$. Since $u'\in L^1(0,T)$ and $\nu$ is finite, Tonelli--Fubini justifies interchanging the $\alpha$-integration with causal convolution; the identity $(k_\nu*u)'=k_\nu*u'$ follows from $u(0)=0$. This proves \eqref{eq:distributed-caputo} without an implicit interchange of a singular integral and differentiation.

Consequently, \Cref{thm:sonine-volterra} with $N=0$ gives a unique natural-range solution of
\begin{equation}
D_{k_\nu}u(t)+p(t)u(t)+\int_0^tK(t,s)u(s)\dd s=f(t),
\qquad u\in I_{\kappa_\nu}(C[0,T]),
\end{equation}
namely
\begin{equation}
u=I_{\kappa_\nu}(I-V_{R_\nu})f,
\end{equation}
where $R_\nu$ is generated by
\begin{equation}
G_\nu(t,s)=p(t)\kappa_\nu(t-s)
+\int_s^tK(t,\xi)\kappa_\nu(\xi-s)\dd\xi.
\end{equation}
The solution is understood in the natural-range sense. Whenever it is absolutely continuous, its leading term is the distributed-order Caputo operator above, so it is a classical zero-initial solution.

This includes genuinely continuous order distributions. For example, if $\dd\nu(\alpha)=\one_{[a,b]}(\alpha)\dd\alpha$, then
\begin{equation*}
k_\nu(t)=\int_a^b\frac{t^{-\alpha}}{\Gamma(1-\alpha)}\dd\alpha,
\qquad
\widehat\kappa_\nu(z)=
\begin{cases}
\dfrac{\log z}{z^b-z^a},&z\ne1,\\[6pt]
\dfrac1{b-a},&z=1.
\end{cases}
\end{equation*}
\Cref{prop:distributed} supplies the quantitative admissibility needed by \Cref{thm:sonine-volterra}; consequently, the resolvent representation of the unique natural-range solution applies to this continuous-order example without an additional kernel hypothesis and retains the prescribed bivariate perturbation.

\subsection{Regularized Prabhakar leading order}

For the remainder of the paper we use
\[
\AC^m[0,T]
:=\{u\in C^{m-1}[0,T]:u^{(m-1)}\in\AC[0,T]\}.
\]
Thus $u^{(m)}$ exists almost everywhere and belongs to $L^1(0,T)$; this is the convention used in the classical-range criterion below.

Let $R=\E_{\rho,\mu,\omega}^{\gamma}$ with kernel \eqref{eq:prabhakar-kernel}. By its power bound, asymptotics and \Cref{lem:injectivity}, $R$ is an admissible causal inverse. Put $A=R^{-1}$ on its natural range. For $0\le\beta<\mu$, causal convolution of the distribution kernel $h_{-\beta}$ with the Prabhakar kernel gives
\begin{equation}
D^\beta\E_{\rho,\mu,\omega}^{\gamma}
=\E_{\rho,\mu-\beta,\omega}^{\gamma}.
\label{eq:prabhakar-shift}
\end{equation}
Thus $D^\beta R=V_{q_\beta}$, where
\begin{equation*}
q_\beta(t)=t^{\mu-\beta-1}E_{\rho,\mu-\beta}^{\gamma}(\omega t^\rho)
=O(t^{\mu-\beta-1}).
\end{equation*}
For $v\in C[0,T]$, formula \eqref{eq:prabhakar-shift} is used at the kernel level to define the natural-range lower operator
\begin{equation}
B_\beta(Rv):=V_{q_\beta}v.
\label{eq:Bbeta}
\end{equation}
Injectivity of $R$ makes this definition unambiguous. Whenever the classical zero-initial Riemann--Liouville derivative of $Rv$ exists, it coincides with $B_\beta(Rv)$. Thus \eqref{eq:prabhakar-shift} does not assume classical differentiability of an arbitrary element of $R(C[0,T])$.

\begin{corollary}[Prabhakar--Volterra perturbation]\label{cor:prabhakar}
Let $\rho,\mu>0$, $\gamma,\omega\in\C$, $0\le\beta_j<\mu$, $p_j\in C[0,T]$, and $K\in C(\Delta_T)$. With $A=(\E_{\rho,\mu,\omega}^{\gamma})^{-1}$ on its natural range, the equation
\begin{equation}
Au+\sum_{j=1}^{N}p_j(t)B_{\beta_j}u
+\int_0^tK(t,s)u(s)\dd s=f(t)
\end{equation}
has the unique natural-range solution
\begin{equation}
u=\E_{\rho,\mu,\omega}^{\gamma}(I-V_{R_P})f,
\end{equation}
where $R_P$ is generated by
\begin{align}
G_P(t,s)
={}&\sum_{j=1}^{N}p_j(t)(t-s)^{\mu-\beta_j-1}
E_{\rho,\mu-\beta_j}^{\gamma}\bigl(\omega(t-s)^\rho\bigr)\\
&+\int_s^tK(t,\xi)(\xi-s)^{\mu-1}
E_{\rho,\mu}^{\gamma}\bigl(\omega(\xi-s)^\rho\bigr)\dd\xi.
\end{align}
Here $B_{\beta_j}$ denotes \eqref{eq:Bbeta}; on the classical zero-initial domain it is the usual $D^{\beta_j}$.
\end{corollary}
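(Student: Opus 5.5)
The plan is to derive \Cref{cor:prabhakar} directly from \Cref{thm:causal-inverse}, using $R=\E_{\rho,\mu,\omega}^{\gamma}=V_r$ with $r=r_{\rho,\mu,\omega}^{\gamma}$ from \eqref{eq:prabhakar-kernel} and $B_j=B_{\beta_j}$ from \eqref{eq:Bbeta}. We therefore need three things. First, $R$ must be an admissible causal inverse. Second, each $B_j$ must satisfy \eqref{eq:BjR} with a power-type bound. Third, we must identify the reduced kernel \eqref{eq:GL} with $G_P$.

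\emph{Admissibility of $R$.} The bound $|r(t)|\le C_Tt^{\mu-1}$ gives $r\in L^1(0,T)$ with $\eta_0=\mu$. The asymptotic relation $r(t)=t^{\mu-1}\Gamma(\mu)^{-1}(1+O(t^\rho))$ shows that $r$ is nonzero on some $(0,\varepsilon)$, and in fact on every such interval. Hence \Cref{lem:injectivity} gives injectivity on $C[0,T]$.

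\emph{The lower operators.} By definition \eqref{eq:Bbeta}, $B_{\beta_j}R=V_{q_{\beta_j}}$ with
\[
q_{\beta_j}(t)=t^{\mu-\beta_j-1}E^{\gamma}_{\rho,\mu-\beta_j}(\omega t^\rho).
\]
The one point to check is the bound $|q_{\beta_j}(t)|\le C_jt^{\mu-\beta_j-1}$ on $(0,T]$, with $\eta_j=\mu-\beta_j>0$. It follows because the Mittag--Leffler series $\sum_n (\gamma)_n\omega^n t^{\rho n}/(n!\,\Gamma(\rho n+\mu-\beta_j))$ is entire in $t^\rho$, so it is bounded on $[0,T]$. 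The same majorant argument as in \Cref{prop:prabhakar-expansion} applies, since $(\gamma)_n/n!$ grows polynomially while $1/\Gamma(\rho n+\mu-\beta_j)$ beats any exponential. When $\beta_j=0$, $B_0$ is the identity, which is consistent with $q_0=r$.

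\emph{Conclusion.} Substituting $q_{\beta_j}$ and $r$ into \eqref{eq:GL} reproduces $G_P$ term by term; the integral term is exactly $\int_s^tK(t,\xi)r(\xi-s)\dd\xi$. \Cref{thm:causal-inverse} then yields existence and uniqueness in $R(C[0,T])$ together with the formula $u=R(I-V_{R_P})f$.

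For the final remark, that $B_{\beta_j}$ agrees with the classical $D^{\beta_j}$ on its zero-initial domain, I would simply cite the statement following \eqref{eq:Bbeta}. That statement rests on \eqref{eq:prabhakar-shift}, which one can verify through the kernel-norm expansion of \Cref{prop:prabhakar-expansion} via $D^\beta J^{\mu+\rho n}=J^{\mu-\beta+\rho n}$ for $\beta<\mu$. I expect the main, though mild, obstacle to be justifying this termwise identification against a classical derivative. It is not needed for the solvability claim itself, which depends only on the definitional relation $B_{\beta_j}R=V_{q_{\beta_j}}$.
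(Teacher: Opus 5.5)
Your proposal is correct and follows the paper's route: the paper states the corollary without a separate proof, relying on the preceding text (power bound, small-$t$ asymptotics and \Cref{lem:injectivity} for admissibility of $R=\E_{\rho,\mu,\omega}^{\gamma}$, and the definitional relation $B_{\beta_j}R=V_{q_{\beta_j}}$ with $q_{\beta_j}=O(t^{\mu-\beta_j-1})$) and then applying \Cref{thm:causal-inverse}, whose kernel \eqref{eq:GL} becomes $G_P$, exactly as you do. You are also right that identifying $B_{\beta_j}$ with the classical $D^{\beta_j}$ is not needed for the solvability claim.
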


Let $m-1<\mu<m$. The strict upper inequality is used because the kernel-level formula below requires the positive parameter $m-\mu$. The endpoint $\mu=m$ may be handled separately by the usual integer-order limiting or composition convention; it is not needed for the range characterization formulated here. The regularized Prabhakar derivative is classically defined on $\AC^m[0,T]$ by
\begin{equation}
{}^C\!\D_{\rho,\mu,\omega}^{\gamma}u
=\E_{\rho,m-\mu,\omega}^{-\gamma}u^{(m)}.
\label{eq:regularized-prabhakar}
\end{equation}
It is the inverse of the Prabhakar integral after removal of the initial polynomial; see \cite{Giusti2020}. The boundary-jet extension makes that removal part of the operator domain itself. Indeed, the Prabhakar semigroup identity gives
\begin{equation}
R=\E_{\rho,\mu,\omega}^{\gamma}
=J^{m-1}\E_{\rho,\mu-m+1,\omega}^{\gamma}.
\label{eq:prabhakar-jet-factor}
\end{equation}
Thus $R(X)\subset\mathcal N_m$: every $Rv$ has continuous derivatives through order $m-1$, all vanishing at the origin. In particular,
\begin{equation}
\mathcal D_{\mathrm{Pr},m}
=\mathcal P_{m-1}\oplus R(X)
\label{eq:prabhakar-affine-domain}
\end{equation}
is a direct sum, and the coefficients of the polynomial component are exactly the ordinary initial jets.

For $P\in\mathcal P_{m-1}$ and $v\in X$, define
\begin{align}
{}^C\!\mathfrak D_{\rho,\mu,\omega}^{\gamma,[m]}(P+Rv)
&=v,
\label{eq:extended-prabhakar-leading}\\
{}^C\!D_{\beta,\partial}^{[m]}(P+Rv)
&={}^CD^\beta P+\E_{\rho,\mu-\beta,\omega}^{\gamma}v,
\qquad 0\le\beta<\mu.
\label{eq:extended-prabhakar-lower}
\end{align}
The finite-dimensional action in \eqref{eq:extended-prabhakar-lower} is explicit: for $0\le k<m$,
\begin{equation}
{}^CD^\beta e_k=
\begin{cases}
0,&k<\lceil\beta\rceil,\\[3pt]
\dfrac{t^{k-\beta}}{\Gamma(k+1-\beta)},&k\ge\lceil\beta\rceil.
\end{cases}
\label{eq:caputo-on-jets}
\end{equation}
With the convention $\lceil0\rceil=0$, this includes $\beta=0$. The operator in \eqref{eq:extended-prabhakar-leading} annihilates the boundary polynomial and agrees with \eqref{eq:regularized-prabhakar} wherever the latter is classically defined; \eqref{eq:extended-prabhakar-lower} similarly extends the lower Caputo operators from their classical domain.

\subsection{Boundary-jet Prabhakar equations with bivariate kernels}

The affine factorization now incorporates arbitrary prescribed initial jets directly. Given $\mathbf d=(d_0,\ldots,d_{m-1})$, write
\begin{equation*}
P_{\mathbf d}=\sum_{k=0}^{m-1}d_ke_k
\end{equation*}
and define the boundary forcing
\begin{equation}
F_{\mathbf d}(t)=f(t)
-\sum_{j=1}^{N}p_j(t){}^CD^{\beta_j}P_{\mathbf d}(t)
-\int_0^tK(t,s)P_{\mathbf d}(s)\dd s.
\label{eq:FP}
\end{equation}

\begin{theorem}[Prabhakar--Volterra equation with prescribed jets]\label{thm:prabhakar-jets}
Let $m-1<\mu<m$, $0\le\beta_j<\mu$, $p_j\in C[0,T]$, and $K\in C(\Delta_T)$. For every $f\in C[0,T]$ and every $\mathbf d\in\C^m$, the boundary-extended equation
\begin{equation}
{}^C\!\mathfrak D_{\rho,\mu,\omega}^{\gamma,[m]}u(t)
+\sum_{j=1}^{N}p_j(t){}^C\!D_{\beta_j,\partial}^{[m]}u(t)
+\int_0^tK(t,s)u(s)\dd s=f(t),
\qquad \varepsilon_ku=d_k,\quad 0\le k<m,
\label{eq:prabhakar-ivp}
\end{equation}
has a unique solution in the affine slice $P_{\mathbf d}+R(X)$. It is
\begin{equation}
u=P_{\mathbf d}
+\E_{\rho,\mu,\omega}^{\gamma}(I-V_{R_P})F_{\mathbf d},
\label{eq:prabhakar-solution}
\end{equation}
where $R_P$ is generated by the kernel $G_P$ in \Cref{cor:prabhakar}. The reduced kernel, and hence its resolvent, is independent of the prescribed jet vector $\mathbf d$.
\end{theorem}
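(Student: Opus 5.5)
The plan is to obtain the theorem as a direct specialization of \Cref{thm:boundary-factorization}. I would take $R=\E_{\rho,\mu,\omega}^{\gamma}$, $\mathcal B=\mathcal P_{m-1}$, $B_j=B_{\beta_j}$ from \eqref{eq:Bbeta} with $q_j=q_{\beta_j}$, and $\Lambda_jP={}^CD^{\beta_j}P$ as given explicitly by \eqref{eq:caputo-on-jets}. With these choices the boundary-extended operators \eqref{eq:boundary-extended-operators} coincide with \eqref{eq:extended-prabhakar-leading}--\eqref{eq:extended-prabhakar-lower}, so $L_\partial$ is exactly the left-hand side of \eqref{eq:prabhakar-ivp}, and $F_\partial(P_{\mathbf d})$ equals $f-F_{\mathbf d}$ by \eqref{eq:FP}.

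The first step is to check the hypotheses of \Cref{thm:causal-inverse}. Admissibility of $R$ follows from the bound $|r_{\rho,\mu,\omega}^{\gamma}(t)|\le C_Tt^{\mu-1}$ together with the small-$t$ asymptotics and \Cref{lem:injectivity}, exactly as already noted after \eqref{eq:prabhakar-kernel}. The lower kernels satisfy $q_{\beta_j}(t)=O(t^{\mu-\beta_j-1})$ with $\eta_j=\mu-\beta_j>0$, since the series for $E^{\gamma}_{\rho,\mu-\beta_j}$ is entire and therefore bounded on $[0,\omega T^\rho]$. The coefficients and kernel satisfy $p_j\in C[0,T]$ and $K\in C(\Delta_T)$ by assumption. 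Finally, $\Lambda_jP$ lies in $X$ because $\beta_j<\mu<m$ and, by \eqref{eq:caputo-on-jets}, each ${}^CD^{\beta_j}e_k$ is either $0$ or a nonnegative power $t^{k-\beta_j}$ with $k\ge\lceil\beta_j\rceil$.

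The second step is transversality \eqref{eq:boundary-transversality}, together with the identification of the jet constraints with the affine slice. Using \eqref{eq:prabhakar-jet-factor}, we have $R(X)\subset J^{m-1}C[0,T]\subset\mathcal N_m$. Hence $\mathcal P_{m-1}\cap R(X)=\{0\}$ by \Cref{prop:boundary-jets}, and any $u=P+Rv$ has $\varepsilon_ku=\varepsilon_kP$. So the conditions $\varepsilon_ku=d_k$ force $P=P_{\mathbf d}$: uniqueness within the affine domain \eqref{eq:prabhakar-affine-domain} subject to the prescribed jets is the same as uniqueness within $P_{\mathbf d}+R(X)$. The identity \eqref{eq:prabhakar-jet-factor} is the one point needing care; I would verify it through the kernel series, using $h_{m-1}*r^{\gamma}_{\rho,\mu-m+1,\omega}=r^{\gamma}_{\rho,\mu,\omega}$ termwise via $h_{m-1}*h_{\mu-m+1+\rho n}=h_{\mu+\rho n}$ together with the kernel-norm convergence of \Cref{prop:prabhakar-expansion}. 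The hypothesis $\mu-m+1>0$ is what this requires.

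With these in place, \eqref{eq:boundary-solution} gives \eqref{eq:prabhakar-solution}. The reduced kernel \eqref{eq:GL} built from $r$ and the $q_{\beta_j}$ is precisely $G_P$, and it involves only $p_j$, $K$, $r$ and $q_j$, so it and its resolvent are independent of $\mathbf d$. I expect the main obstacle to be the jet-compatibility step above, namely justifying rigorously that $R(X)\subset\mathcal N_m$ with continuous derivatives through order $m-1$. The remainder is bookkeeping.
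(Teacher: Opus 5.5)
Your route is the paper's: specialize \Cref{thm:boundary-factorization} with $R=\E_{\rho,\mu,\omega}^{\gamma}$, $\mathcal B=\mathcal P_{m-1}$ and $\Lambda_jP={}^CD^{\beta_j}P$, obtain transversality and the jet identification from \eqref{eq:prabhakar-jet-factor}, and read off $G_P$ and $F_{\mathbf d}$. Your additional checks are correct and make explicit what the paper leaves implicit: $\Lambda_jP\in X$, $F_\partial(P_{\mathbf d})=f-F_{\mathbf d}$, and the termwise kernel verification of \eqref{eq:prabhakar-jet-factor}.

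There is, however, one false step, exactly where you expected the difficulty. The inclusion $J^{m-1}C[0,T]\subset\mathcal N_m$ does not hold. For $g\in C[0,T]$ one has $(J^{m-1}g)^{(m-1)}(0)=g(0)$, and this need not vanish. For instance, $J^{m-1}1=e_{m-1}$ lies in $J^{m-1}C[0,T]\cap\mathcal P_{m-1}$. For $m=1$, $J^0C[0,T]$ is all of $C[0,T]$. So your chain would ``prove'' transversality for a space on which it actually fails. The top jet $\varepsilon_{m-1}$ needs a separate argument.

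The repair is one line, and it uses the same integrability that makes \eqref{eq:prabhakar-jet-factor} meaningful. Write $Rv=J^{m-1}g$ with $g=\E_{\rho,\mu-m+1,\omega}^{\gamma}v$. Since $\mu>m-1$, the kernel $r^{\gamma}_{\rho,\mu-m+1,\omega}(t)=O(t^{\mu-m})$ is integrable. Hence $g$ is continuous and $|g(t)|\le\|v\|_\infty\int_0^t|r^{\gamma}_{\rho,\mu-m+1,\omega}(s)|\dd s\to0$, so $g(0)=0$. Then $(Rv)^{(k)}=J^{m-1-k}g$ for $0\le k\le m-1$ (with $J^0=I$), and all of these vanish at the origin. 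This gives $R(X)\subset\mathcal N_m$, hence \eqref{eq:boundary-transversality} and $\varepsilon_k(P+Rv)=\varepsilon_kP$. With this inserted, the rest of your argument goes through as written.
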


\begin{proof}
Equation \eqref{eq:prabhakar-jet-factor} gives the transversality required in \eqref{eq:boundary-transversality}. Apply \Cref{thm:boundary-factorization} with $\mathcal B=\mathcal P_{m-1}$ and $\Lambda_jP={}^CD^{\beta_j}P$. Formulas \eqref{eq:prabhakar-shift} and \eqref{eq:extended-prabhakar-lower} give the same kernels $q_{\beta_j}$ as in the homogeneous problem, while \eqref{eq:boundary-forcing} becomes \eqref{eq:FP}. The boundary coordinates are the ordinary jets by $R(X)\subset\mathcal N_m$.
\end{proof}

The relation with classical regularity is exact. Set
\begin{equation*}
\AC_0^m[0,T]
=\{y\in\AC^m[0,T]:y^{(k)}(0)=0,\ 0\le k<m\}.
\end{equation*}

The standard Prabhakar semigroup and regularized left-inverse identities, as well as their operational-calculus use in solving Prabhakar equations, are developed in \cite{Giusti2020,RaniFernandez2022a,RaniFernandez2022b}. Those identities start on a prescribed classical or operational domain. The next proposition addresses the reverse domain question created by the present affine construction: given an arbitrary natural-range density $v\in C[0,T]$, it characterizes by an if-and-only-if condition exactly when $Rv$ has the classical zero-jet regularity $\AC_0^m$. Thus the claim is a range characterization, not a new semigroup identity or a restatement of the usual left-inverse formula.

\begin{proposition}[Exact classical-range criterion]\label{prop:classical-range}
For $v\in C[0,T]$,
\begin{equation}
Rv\in\AC_0^m[0,T]
\quad\Longleftrightarrow\quad
v=\E_{\rho,m-\mu,\omega}^{-\gamma}w
\ \text{a.e. on }(0,T)
\quad\text{for some }w\in L^1(0,T).
\label{eq:classical-range-equivalence}
\end{equation}
In that case $Rv=J^mw$ and
\begin{equation*}
{}^C\!\D_{\rho,\mu,\omega}^{\gamma}(Rv)=v
\quad\text{a.e. on }(0,T).
\end{equation*}
\end{proposition}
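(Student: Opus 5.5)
The whole argument rests on one semigroup identity at the kernel level. Since $m-\mu>0$, the Laplace transforms of the two Prabhakar kernels multiply to a pure power:
\begin{equation*}
\widehat{r^{\gamma}_{\rho,\mu,\omega}}(z)\,\widehat{r^{-\gamma}_{\rho,m-\mu,\omega}}(z)
=\frac{z^{\rho\gamma-\mu}}{(z^\rho-\omega)^{\gamma}}\cdot\frac{z^{-\rho\gamma-(m-\mu)}}{(z^\rho-\omega)^{-\gamma}}=z^{-m}
\end{equation*}
for large real $z$. Equivalently, one can multiply the two norm-convergent expansions of \Cref{prop:prabhakar-expansion} and use $\sum_{k}(\gamma)_k(-\gamma)_{n-k}/(k!(n-k)!)=\delta_{n0}$. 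Uniqueness of the Laplace transform, or the Cauchy product in the Banach algebra $L^1(0,T)$ of \Cref{prop:smoothing}, then gives
\begin{equation*}
\E_{\rho,\mu,\omega}^{\gamma}\,\E_{\rho,m-\mu,\omega}^{-\gamma}=J^m
\end{equation*}
as an identity of $L^1$ convolution operators. Both factors are convolutions, so they commute.

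For ``$\Leftarrow$'', suppose $v=\E_{\rho,m-\mu,\omega}^{-\gamma}w$ almost everywhere with $w\in L^1$. Then $Rv=R\,\E^{-\gamma}_{\rho,m-\mu,\omega}w=J^mw$, and $J^mw\in\AC_0^m[0,T]$ with $(J^mw)^{(m)}=w$ almost everywhere. Consequently
\begin{equation*}
{}^C\!\D^{\gamma}_{\rho,\mu,\omega}(Rv)=\E^{-\gamma}_{\rho,m-\mu,\omega}(J^mw)^{(m)}=\E^{-\gamma}_{\rho,m-\mu,\omega}w=v
\end{equation*}
almost everywhere, which is the final claim of the proposition.

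For ``$\Rightarrow$'', suppose $y:=Rv\in\AC_0^m$ and set $w:=y^{(m)}\in L^1(0,T)$. Because all jets of $y$ through order $m-1$ vanish, iterated Taylor integration gives $y=J^mw$. Applying the semigroup identity,
\begin{equation*}
R\,\E^{-\gamma}_{\rho,m-\mu,\omega}w=J^mw=y=Rv,
\end{equation*}
so $R\bigl(v-\E^{-\gamma}_{\rho,m-\mu,\omega}w\bigr)=0$. The function in parentheses is only known to lie in $L^1$, not in $C[0,T]$, so the injectivity of $R$ on $L^1(0,T)$ from \Cref{lem:injectivity} is what is needed here. That lemma applies because the kernel $r^\gamma_{\rho,\mu,\omega}$ behaves like $t^{\mu-1}/\Gamma(\mu)$ near $0$ and therefore does not vanish almost everywhere on any $(0,\varepsilon)$. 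It yields $v=\E^{-\gamma}_{\rho,m-\mu,\omega}w$ almost everywhere. The identities $Rv=J^mw$ and ${}^C\!\D(Rv)=v$ then follow exactly as in the other direction.

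The main obstacle is justifying the kernel identity $r^{\gamma}_{\rho,\mu,\omega}*r^{-\gamma}_{\rho,m-\mu,\omega}=h_m$ on $(0,T)$ rigorously, given that $\gamma,\omega$ are complex. I would do this through the series route rather than the Laplace route. The termwise products $h_{\mu+\rho k}*h_{m-\mu+\rho l}=h_{m+\rho(k+l)}$ hold exactly, and absolute convergence in $L^1(0,T)$, from the majorant in \Cref{prop:prabhakar-expansion}, permits regrouping by $n=k+l$. The regrouped coefficients are $\omega^n\sum_k(\gamma)_k(-\gamma)_{n-k}/(k!(n-k)!)$, and these equal the coefficients of $(1-x)^{-\gamma}(1-x)^{\gamma}=1$, so they vanish for every $n\ge1$. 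This avoids any analytic-continuation issues with $(z^\rho-\omega)^{\gamma}$.
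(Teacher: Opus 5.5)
Your proposal is correct and follows the paper's proof essentially step by step: the identity $\E^{\gamma}_{\rho,\mu,\omega}\E^{-\gamma}_{\rho,m-\mu,\omega}=J^m$ for the backward direction, then $Rv=J^mw$ from the vanishing jets, and injectivity of $R$ on $L^1(0,T)$ via \Cref{lem:injectivity} for the forward direction. The only difference is that you prove that semigroup identity yourself, via the Cauchy product of the two norm-convergent expansions in $L^1(0,T)$ and the vanishing binomial coefficients; this is a sound, self-contained justification of the step the paper simply cites.
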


\begin{proof}
If $v=\E_{\rho,m-\mu,\omega}^{-\gamma}w$, the Prabhakar semigroup identity gives
\begin{equation*}
Rv
=\E_{\rho,\mu,\omega}^{\gamma}
 \E_{\rho,m-\mu,\omega}^{-\gamma}w
=J^mw\in\AC_0^m[0,T].
\end{equation*}
Conversely, if $Rv\in\AC_0^m[0,T]$, put $w=(Rv)^{(m)}\in L^1(0,T)$. The zero jets give $Rv=J^mw$, while the same semigroup identity gives
\begin{equation*}
J^mw=R\E_{\rho,m-\mu,\omega}^{-\gamma}w.
\end{equation*}
Injectivity of $R$ on $L^1(0,T)$ from \Cref{lem:injectivity} yields \eqref{eq:classical-range-equivalence}. Formula \eqref{eq:regularized-prabhakar} then gives the final identity.
\end{proof}

\begin{remark}[Relation to the classical Prabhakar inverse]
The standard left-inverse identity for the regularized Prabhakar derivative on its classical domain is established in the Prabhakar literature; see, for example, \cite{Giusti2020}. The new content of \Cref{prop:classical-range} is not that identity itself, but the exact equivalence that determines which elements of the larger affine natural-range construction actually belong to $\AC_0^m[0,T]$. Thus the proposition identifies the precise compatibility condition on the causal density $v$ that upgrades a natural-range solution to a classical one.
\end{remark}

\begin{corollary}[An explicit sufficient condition]\label{cor:AC-density}
Put $\delta=m-\mu\in(0,1)$. If $v\in\AC[0,T]$, then
$Rv\in\AC_0^m[0,T]$. More precisely, with the kernel notation
\eqref{eq:prabhakar-kernel}, define
\begin{equation}
w(t)=v(0)r_{\rho,1-\delta,\omega}^{\gamma}(t)
+\bigl(\E_{\rho,1-\delta,\omega}^{\gamma}v'\bigr)(t).
\label{eq:AC-density-w}
\end{equation}
Then $w\in L^1(0,T)$ and
\begin{equation*}
v=\E_{\rho,\delta,\omega}^{-\gamma}w
\qquad\text{a.e. on }(0,T).
\end{equation*}
Consequently, the condition in \Cref{prop:classical-range} is automatic for
absolutely continuous causal densities, although it is not automatic for the
continuous densities produced by the general resolvent theorem.
\end{corollary}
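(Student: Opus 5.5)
The plan is to verify the identity $v=\E_{\rho,\delta,\omega}^{-\gamma}w$ directly from the Prabhakar semigroup law for the kernels \eqref{eq:prabhakar-kernel}. We then deduce $Rv\in\AC_0^m[0,T]$ from \Cref{prop:classical-range}. Integrability of $w$ is immediate. The kernel $r_{\rho,1-\delta,\omega}^{\gamma}$ is bounded by $C_Tt^{-\delta}$, which lies in $L^1(0,T)$ since $\delta<1$. Also $v'\in L^1(0,T)$, so Young's inequality gives $\E_{\rho,1-\delta,\omega}^{\gamma}v'\in L^1(0,T)$.

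For the identity, write $v=v(0)+J v'$, which holds because $v\in\AC[0,T]$. Equivalently, $v=v(0)\,h_1+h_1*v'$ as causal functions on $(0,T)$. The semigroup identity at the kernel level,
\begin{equation*}
r_{\rho,\delta,\omega}^{-\gamma}*r_{\rho,1-\delta,\omega}^{\gamma}=r_{\rho,1,\omega}^{0}=h_1,
\end{equation*}
follows from the Laplace transform of the Prabhakar kernels, $s^{\gamma\rho-\mu}(s^\rho-\omega)^{-\gamma}$. It can also be checked by multiplying the two series termwise. This is the same identity as the one used in \Cref{prop:classical-range}, now with parameters $\delta$ and $1-\delta$. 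Convolving $w$ with $r_{\rho,\delta,\omega}^{-\gamma}$ then gives
\begin{equation*}
\E_{\rho,\delta,\omega}^{-\gamma}w=v(0)\,h_1+h_1*v'=v
\end{equation*}
almost everywhere on $(0,T)$. Associativity of $L^1$ convolution, justified by Tonelli and the power bounds, handles the second term.

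Applying \Cref{prop:classical-range} with this $w$ yields $Rv\in\AC_0^m[0,T]$, with $Rv=J^mw$. The concluding remark contrasts this with general continuous $v$: the resolvent output need only be continuous, and continuity alone does not supply the representation. The remark is interpretive; the strict-inclusion example given later substantiates it. I would not prove it here beyond noting it.

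The main obstacle is the rigorous semigroup identity for complex $\gamma,\omega$ and $\mu$ parameters summing to $1$. My first attempt is Laplace transforms for large real $s$, which exist because the kernels grow at most exponentially, followed by Laplace uniqueness restricted to $(0,T)$ by causality. If that becomes delicate, a fallback is the absolutely convergent double series. This uses $h_a*h_b=h_{a+b}$ and the Pochhammer identity $\sum_{k}(-\gamma)_k(\gamma)_{n-k}/(k!(n-k)!)=(0)_n/n!=\delta_{n0}$. Fubini in $L^1$ is justified by the majorant from \Cref{prop:prabhakar-expansion}.
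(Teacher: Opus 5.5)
Your proposal is correct and follows essentially the same argument as the paper. It gets integrability of $w$ from the power bound and Young's inequality, uses the semigroup identities $\E_{\rho,\delta,\omega}^{-\gamma}r_{\rho,1-\delta,\omega}^{\gamma}=h_1$ and $\E_{\rho,\delta,\omega}^{-\gamma}\E_{\rho,1-\delta,\omega}^{\gamma}v'=Jv'=v-v(0)$, and then concludes with \Cref{prop:classical-range}. Your justification of the semigroup law, by Laplace transforms or by the Vandermonde--Pochhammer double series, supplies detail for a step the paper only cites.
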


\begin{proof}
Because $1-\delta>0$, the first term in \eqref{eq:AC-density-w} is integrable;
the second is integrable by Young's inequality. The Prabhakar semigroup law
gives
\begin{equation*}
\E_{\rho,\delta,\omega}^{-\gamma}
r_{\rho,1-\delta,\omega}^{\gamma}=h_1=1
\quad\text{a.e.},
\end{equation*}
and
\begin{equation*}
\E_{\rho,\delta,\omega}^{-\gamma}
\E_{\rho,1-\delta,\omega}^{\gamma}v'
=J^1v'=v-v(0).
\end{equation*}
Adding these identities gives the asserted representation of $v$, and
\Cref{prop:classical-range} yields the conclusion.
\end{proof}

\begin{example}[The natural range is genuinely larger]\label{ex:strict-natural-range}
Take the Riemann--Liouville subcase $\gamma=0$, so that
$R=J^\mu$ and
$\E_{\rho,\delta,\omega}^{0}=J^\delta$.  We give an explicit continuous
density outside $J^\delta(L^1(0,T))$.  Put $b=2/\delta$ and, for $t>0$, define
\begin{equation}
w_*(t)=\frac{\sin(t^{-b})}{t},
\qquad
v_*(t)=\frac{1}{\Gamma(\delta)}
\lim_{\varepsilon\downarrow0}
\int_{\varepsilon}^{t}(t-s)^{\delta-1}w_*(s)\dd s,
\qquad v_*(0)=0.
\label{eq:explicit-nonrange-density}
\end{equation}
First, $w_*$ defines a causal distribution $W_*\in\D'_+(\R)$ by the
improper pairing
\begin{equation*}
\langle W_*,\varphi\rangle
:=\lim_{\varepsilon\downarrow0}
\int_\varepsilon^\infty \frac{\sin(t^{-b})}{t}\varphi(t)\dd t,
\qquad \varphi\in C_c^\infty(\R).
\end{equation*}
Indeed, if $\supp\varphi\subset(-\infty,M]$ for some $M>0$, the substitution $y=t^{-b}$
transforms the part near zero into
\begin{equation*}
\frac1b\int_{M^{-b}}^{\varepsilon^{-b}}
\frac{\sin y}{y}\varphi(y^{-1/b})\dd y,
\end{equation*}
which converges by integration by parts; the same estimate gives continuity
in the test-function topology.  Thus the integral in
\eqref{eq:explicit-nonrange-density} is the restriction of the causal
convolution $h_\delta*W_*$.

We next verify its continuity quantitatively.  With $\lambda=t^{-b}$, the
substitutions $s=tx$ and $y=x^{-b}$ give
\begin{equation}
v_*(t)=\frac{t^{\delta-1}}{b\Gamma(\delta)}
\int_1^\infty A(y)\sin(\lambda y)\dd y,
\qquad
A(y)=(1-y^{-1/b})^{\delta-1}y^{-1}.
\label{eq:oscillatory-density-scaled}
\end{equation}
On $1\le y\le2$, one has
$A(y)=(y-1)^{\delta-1}a(y)$ with $a\in C^1[1,2]$.  Splitting at
$y-1=\lambda^{-1}$, estimating the first part absolutely, and integrating
by parts on the second gives
\begin{equation*}
\left|\int_1^2A(y)e^{i\lambda y}\dd y\right|
\le C\lambda^{-\delta}.
\end{equation*}
On $[2,\infty)$, $A(y)=O(y^{-1})$ and $A'\in L^1(2,\infty)$, so one
integration by parts gives an $O(\lambda^{-1})$ bound.  Since
$\lambda^{-\delta}=t^{b\delta}$, $\lambda^{-1}=t^b$, $b\delta=2$, and
$b>2$, \eqref{eq:oscillatory-density-scaled} yields, for any fixed
$T_0\in(0,T]$,
\begin{equation*}
|v_*(t)|\le C t^{\delta-1+b\delta}=C t^{1+\delta}
\qquad (0<t\le T_0),
\end{equation*}
and hence $v_*(t)\to0$ as $t\downarrow0$.  The same integration-by-parts
estimates are uniform for $t$ in every compact subinterval of $(0,T]$;
therefore the truncated integrals converge locally uniformly and
$v_*\in C[0,T]$.
The particular choice $b=2/\delta$ is made for convenience, yielding the
clean estimate $v_*(t)=O(t^{1+\delta})$; the same argument gives continuity
at the origin whenever $b>(1-\delta)/\delta$.

On the other hand,
\begin{equation*}
\int_0^{T_0}|w_*(t)|\dd t
=\frac1b\int_{T_0^{-b}}^\infty\frac{|\sin y|}{y}\dd y
=\infty.
\end{equation*}
All convolutions in the following identity are taken in the algebra of causal
distributions $\D'_+(\R)$, where convolution is associative.
If $v_*=J^\delta g=h_\delta*g$ for some $g\in L^1(0,T)$, the causal
distribution identity $h_{-\delta}*h_\delta=h_0=\delta_0$ stated in
Section~2.1 gives
\begin{equation*}
g=h_{-\delta}*v_*=W_*
\qquad\text{in }\D'(0,T).
\end{equation*}
Since $W_*$ is represented by the smooth function $w_*$ away from zero,
this equality implies $g=w_*$ almost everywhere on every
$(\varepsilon,T)$, contradicting the preceding divergence.  Hence
\begin{equation*}
v_*\in C[0,T]\setminus J^\delta(L^1(0,T)).
\end{equation*}
Now put $u_*=J^\mu v_*$. Then $u_*$ belongs to the natural range
$J^\mu C[0,T]$, but \Cref{prop:classical-range} gives
$u_*\notin\AC_0^m[0,T]$. Hence
\begin{equation*}
J^\mu C[0,T]\cap\AC_0^m[0,T]
\subsetneq J^\mu C[0,T].
\end{equation*}
This strict inclusion isolates the domain information supplied by
\Cref{prop:classical-range}: the proposition does not merely reapply the
standard left-inverse identity on a domain already known to be classical.
\end{example}

For the solution in \eqref{eq:prabhakar-solution}, let
\begin{equation*}
v_{\mathbf d}=(I-V_{R_P})F_{\mathbf d}.
\end{equation*}
It is the classical solution of the regularized-Prabhakar initial-value problem with $u^{(k)}(0)=d_k$ precisely when $v_{\mathbf d}$ satisfies the equivalent conditions in \Cref{prop:classical-range}. Thus the affine theorem is unconditional on $P_{\mathbf d}+R(X)$, while \Cref{prop:classical-range} identifies its classical subdomain exactly.

In particular, taking $N=0$ and replacing $K$ by $-\lambda K$ gives
\begin{equation}
{}^C\!\mathfrak D_{\rho,\mu,\omega}^{\gamma,[m]}u(t)
=f(t)+\lambda\int_0^tK(t,s)u(s)\dd s,
\qquad \varepsilon_ku=d_k,\quad 0\le k<m.
\end{equation}
The corresponding sign-reversed reduced kernel is
\begin{equation*}
H_P(t,s)=\lambda\int_s^tK(t,\xi)(\xi-s)^{\mu-1}
E_{\rho,\mu}^{\gamma}\bigl(\omega(\xi-s)^\rho\bigr)\dd\xi.
\end{equation*}
If $S_P=H_P+H_P^{(*2)}+H_P^{(*3)}+\cdots$, then
\begin{equation}
u=P_{\mathbf d}+\E_{\rho,\mu,\omega}^{\gamma}(I+V_{S_P})F_{\mathbf d}.
\end{equation}
The independently prescribed kernel remains bivariate throughout the reduction. On the exact classical subdomain of \Cref{prop:classical-range}, the construction provides a resolvent representation for the corresponding general-kernel problem considered in \cite{Eshaghi2021}, without first imposing convolution structure.

\section{Discussion and conclusion}

The main algebraic result is the unique finite normal form for polynomial-coefficient fractional powers modulo the finite polynomially weighted diagonal-flat convolution ideal $\Vflat$. Polynomial coefficients provide the local finiteness needed for fractional Leibniz rewriting, diagonal flatness makes this specific Volterra remainder invariant under every order in $\Gamma$, and distributional separation prevents cancellation between generalized powers and the remainder. The restrictions are sharp for the stated finite architecture: finite reordering of a smooth multiplier with $J$ modulo even the full smooth diagonal-flat remainder forces that multiplier to be polynomial, while $\Vflat$ is the smallest two-sided ideal containing the flat convolution generators. The resulting order is additive; in particular, $\A_\Gamma/\Vflat$ has no zero divisors and the leading symbol is multiplicative. These sharpness results delimit the finite architecture from arbitrary smooth multipliers and arbitrary smooth Volterra kernels, for which finite closure fails. The flat space $\M$ serves as the maximal homogeneous $D$--$J$ inverse core, while the later natural-range and affine constructions carry the equation-level solutions.

The analytic part of the paper connects the finite-normal-form algebra to causal inversion without turning the full bivariate perturbation into an element of that algebra.  Proposition~\ref{prop:core-range-bridge} gives the link explicitly.  For a monic normal form, its unique leading power $D^\alpha$ determines the right factor $J^\alpha$; all lower powers become smoothing factors, and the resulting identity extends compatibly from $\M$ to $J^\alpha C[0,T]$. The monic scope is exact: a nowhere-vanishing general leading coefficient may be normalized at the equation level but can leave the polynomial algebra, while a vanishing coefficient obstructs the global reduction. The $L^1$ completion then enlarges the negative-order convolution smoothing sector, where it contains integrable Sonine kernels and the norm-convergent Prabhakar expansion. For an injective smoothing inverse, compatible lower operators and an independent $K(t,s)$ reduce to one weakly singular second-kind Volterra equation. The reduced kernel remains explicitly bivariate, and classical Volterra theory supplies its resolvent on every finite interval without a norm-smallness assumption.

Endpoint data are handled by a finite boundary block rather than by enlarging the homogeneous normal-form theorem. On $\mathcal P_{m-1}\oplus R(C[0,T])$, the initial polynomial and causal density are separated canonically, and prescribed jets change only the forcing while leaving the reduced resolvent kernel unchanged. Thus the successive enlargements have distinct roles: $\M$ carries the endpoint-free finite normal form, $R(C[0,T])$ is the homogeneous solution domain, and $\mathcal P_{m-1}\oplus R(C[0,T])$ carries prescribed finite jets. For power-law and regularized Prabhakar equations these boundary coordinates are the ordinary initial derivatives. In the Prabhakar case, \Cref{prop:classical-range} further identifies exactly when the unconditional affine natural-range solution lies in the classical $\AC^m$ domain. The explicit formula \eqref{eq:AC-density-w} makes the test automatic for absolutely continuous densities, whereas \Cref{ex:strict-natural-range} proves in the Riemann--Liouville subcase that the continuous natural range is strictly larger than its classical $\AC_0^m$ part.

The Sonine and distributed-order realizations show that the factorization is not tied to a single power law, while the nonconvolution examples demonstrate that an independently prescribed continuous bivariate memory term survives the reduction. In comparison with the settings treated in \cite{Eshaghi2021,FernandezRestrepo2022,Guzoglu2026}, the present reduction simultaneously retains continuous variable coefficients, compatible lower-order fractional operators, and an independently prescribed continuous bivariate Volterra kernel. The framework therefore separates three roles that are often mixed: a fractional operator algebra with finite normal forms, a completed convolution smoothing sector, and an external bivariate Volterra perturbation. Natural extensions include matrix-valued kernels, broader locally finite coefficient classes, boundary modules for more general Sonine traces, and a systematic study of units and localizations of $\A_\Gamma/\Vflat$.

\end{document}